\documentclass[a4paper,11pt,reqno]{amsart}

\usepackage[left=1in,right=1in,top=1in,bottom=1in]{geometry}
\usepackage{enumitem}
\usepackage{amssymb,amsmath,latexsym,amsfonts,amsbsy,amsthm,mathtools,color}
\usepackage{float}
\usepackage[colorlinks=true,citecolor=blue,filecolor=blue,linkcolor=blue,urlcolor=blue]{hyperref}

\usepackage{esint}

\newcommand{\om}{\Omega}

\newcommand{\va}{\varepsilon}

\newcommand{\be}{\begin{equation}}
\newcommand{\ee}{\end{equation}}

\newcommand{\Z}{\mathbb{Z}}

\newcommand{\R}{\mathbb{R}}

\newcommand{\cT}{\mathcal{T}}

\newcommand{\ep}{\epsilon}

\newcommand{\op}{\operatorname}

\newcommand{\abs}[1]{\left\lvert#1\right\rvert}
\newcommand{\norm}[1]{\left\lVert#1\right\rVert}

\newcommand{\dd}{\,\mathrm{d}}

\DeclareMathOperator{\supp}{supp}

\DeclareMathOperator{\tr}{tr}

\DeclareMathOperator{\Sing}{Sing}

\DeclareMathOperator{\loc}{loc}

\setlist[enumerate,1]{label=$(\theenumi)$,ref=\theenumi}
\numberwithin{equation}{section}

\theoremstyle{plain}
\newtheorem{thm}{Theorem}[section]
\newtheorem{cor}[thm]{Corollary}

\newtheorem{lem}[thm]{Lemma}
\newtheorem{prop}[thm]{Proposition}

\theoremstyle{definition}

\newtheorem{example}[thm]{Example}
\newtheorem{q}[thm]{Question}

\theoremstyle{remark}
\newtheorem{rem}[thm]{Remark}

\title[Small-energy regularity for the Liouville equation]{Small-energy regularity for stationary weak solutions of the Liouville equation}
\author{Haotong Fu}
\address{School of Mathematical Sciences, Peking University, Beijing 100871, China}
\email{2301110012@pku.edu.cn}

\author{Kelei Wang}
\address{School of Mathematics and Statistics, Wuhan University, Wuhan 430072, China}
\email{wangkelei@whu.edu.cn}

\author{Wei Wang}
\address{School of Mathematical Sciences, Peking University, Beijing 100871, China}
\email{wwmath166@outlook.com}
\email{2201110024@stu.pku.edu.cn}

\author{Ke Wu}
\address{School of Mathematics, Yunnan Normal University, Kunming, 650500, China}
\email{m18629096093@163.com}

\date{}

\begin{document}
\begin{abstract}
We establish small-energy regularity for stationary weak solutions of the Liouville equation $ -\Delta u=e^u $ in every dimension. It solves a problem left open in Da Lio [Commun.
Partial Differ. Equations, 2008] and posed explicitly again by Da Lio and Hyder in [Commun.
Contemp. Math. 2025]. The proof relies on the analysis of concentration phenomena  for almost stationary fields with structure constraints.
\end{abstract}
\subjclass[2020]{Primary 35B65; Secondary 35J61, 35D30}
\keywords{Liouville equation, stationary weak solutions, epsilon regularity, quadratic stress, concentration compactness}
\maketitle

\section{Introduction}

\subsection{Background and main results}
In this paper, we consider weak solution $ u $ of the equation
\be
-\Delta u = e^u\quad\text{in }\Omega\label{equationLiouville},
\ee
where $\Omega\subset\subset\R^n$ is a open subset. Here
a function $u\in H^1_{\rm loc}(\Omega)$ with $e^u\in L^1_{\rm loc}(\Omega)$ is said to be a weak solution of \eqref{equationLiouville} if
\[
 \int_\Omega\nabla u\cdot\nabla\zeta\dd x
 =\int_\Omega e^u\zeta\dd x,\quad \forall\zeta\in C^\infty_0(\Omega).
\]
Moreover, $ u $ is called stationary if, in addition,
\begin{equation}\label{eq:stationarity-definition}
 \op{div}\left(\nabla u\otimes\nabla u-\frac{1}{2}\abs{\nabla u}^2I+e^uI\right)=0
 \quad\hbox{in }\mathcal D'(\Omega).
\end{equation}
Indeed, this implies $u$ is a critical point of the energy functional 
\[
\int_{\Omega}\left(\frac{1}{2}\abs{\nabla u}^2-e^u\right) 
\]
with respect to the domain-variation. 

In the two dimensional case, Br{\'e}zis and Merle \cite{BM91} proved that every weak solution of \eqref{equationLiouville} is smooth. When $ n\geq 3 $, weak solutions of \eqref{equationLiouville} is not necessarily smooth, even for stationary solutions. Let us consider the energy
\[
 \mathcal E_u(x,r)=r^{2-n}\int_{B_r(x)}\left(\abs{\nabla u}^2+e^u\right)\dd y.
\]
Inspired by \cite{Lin01}, Da Lio  \cite{DaLio} (see also the improvement in Da Lio-Hyder \cite[Theorem 1.2]{DaLioHyder}) showed that when $ n = 3 $, there is an absolute constant $ \va_0>0 $ such that if a stationary solution $ u $ of \eqref{equationLiouville} satisfies $ \mathcal{E}_u(x,2r)\leq\va_0 $, then $ u $ is smooth in $ B_r(x) $. This is called the $\va$-regularity result for stationary solutions. This $\va$-regularity implies that the Hausdorff dimension of the singular set $ \Sing(u) $ is no more than $ 1 $ in dimension three. Here the singular set of $ u $  contains all points in $ \om $, of which $ u $ is not smooth in every neighborhood.  Later in \cite{DaLioHyder}, Da Lio and Hyder established a blow-up analysis theory for stationary solutions of \eqref{equationLiouville}. In the final paragraph of Section~1, they posed the problem about the higher-dimensional blow-up analysis theory. A key problem is whether the $\va$-regularity result holds in any dimension.

Our main theorem  address this partial-regularity question. More precisely, we prove the following result.
\begin{thm}[Small-energy regularity]\label{thm:weak-epsilon}
For every $n\geq3$, there are $\varepsilon_n>0$ and $C_n<+\infty$ such that every stationary weak solution $u\in H^1(B_2)$, with $e^u\in L^1(B_2)$ and $\mathcal E_u(0,2)\leq\varepsilon_n$, is smooth on $B_{\frac{1}{2}}$ and satisfies
\[
 \sup_{B_{\frac{1}{4}}}e^u\leq C_n\int_{B_2}e^u\dd x.
\]
\end{thm}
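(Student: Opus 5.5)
The plan is to prove the small-energy regularity by a compactness and contradiction argument for the normalized energy. The basic tool is the monotonicity formula coming from stationarity \eqref{eq:stationarity-definition}. Testing the divergence identity with the radial field $X=(y-x)\phi(|y-x|)$ gives, for $r<s$, an almost-monotone quantity
\[
\Theta_u(x,r)=r^{2-n}\int_{B_r(x)}\Big(\tfrac12|\nabla u|^2-e^u\Big)\dd y+\text{(boundary/correction terms)},
\]
whose radial derivative is $r^{2-n}\int_{\partial B_r}|\partial_r u|^2$ plus a term of the form $2r^{1-n}\int_{B_r}e^u$. This term has a good sign in dimension $n\ge3$ after the standard rearrangement, since $n-2>0$ and the $e^uI$ stress contributes $n\,e^u$ against $(n-2)\tfrac12|\nabla u|^2$. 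From it I will extract scale-invariant control: $\mathcal E_u(x,r)\le C\,\mathcal E_u(0,2)$ for all $x\in B_1$ and $r<1/2$. The first step is to verify this carefully. The obstacle is that the sign of $e^u$ in the energy is unfavourable. I expect to handle this by also testing the weak equation with $\zeta\,(u-\bar u)$-type functions, which bounds $\int e^u$ by the gradient energy at comparable scales.

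The second step is to pass from small Morrey-type energy to an $L^\infty$ bound on $e^u$. I will write $u=w+h$ on $B_r(x)$, where $w$ is the Newtonian potential of $e^u\chi_{B_r}$ and $h$ is harmonic. Smallness of $r^{2-n}\int_{B_r}e^u$ uniformly over all balls means that $e^u$ lies in the Morrey space $M^{1,n-2}$ with small norm. By Adams' potential estimates, $w$ then has small BMO norm, and a John--Nirenberg (Brezis--Merle type) estimate gives $e^{p|w-\bar w|}\in L^1$ for large $p$. The harmonic part $h$ is controlled through $\nabla u$, which is small in $M^{2,n-2}$, and the mean value property. Combined with $e^u\in L^1$, this yields $e^u\in L^p$ on a smaller ball with a Morrey-improved norm. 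Iterating with elliptic regularity then gives $e^u\in L^\infty$, and bootstrapping gives smoothness. Normalizing carefully by $\int_{B_2}e^u$ produces the stated estimate $\sup_{B_{1/4}}e^u\le C_n\int_{B_2}e^u$.

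The main obstacle is that $\nabla u$ small in $M^{2,n-2}$ does not by itself give BMO control of $u$ strong enough for exponential integrability, since a Morrey $L^2$ gradient only gives BMO. Exponential integrability of $u$ needs $\|u\|_{BMO}$ below a critical threshold, and that is exactly what smallness provides. So the step I would check hardest is the uniformity of the monotonicity bound over all centers and scales. Should it fail, I would argue by contradiction instead. I would take $u_k$ with $\mathcal E_{u_k}(0,2)\to0$ but with $e^{u_k}$ unbounded relative to its mass, rescale at a point of maximal $r^2\sup e^{u}$ (a point-picking argument), and obtain in the limit a stationary field with a quadratic stress tensor and a defect measure. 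The structure constraint from \eqref{eq:stationarity-definition}, namely a traceless-type part and a nonnegative $e^u$ part, should force the defect measure to vanish at small energy, while the rescaled solutions have $\sup e^{v}=1$, which is a contradiction.
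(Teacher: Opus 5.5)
\textbf{There is a genuine gap: the derivation in your Step~1 fails, and both Step~2 and the fallback rest on it.} Testing stationarity with the radial field gives exactly
\[
 \frac{\mathrm d}{\mathrm dr}\left[r^{2-n}\int_{B_r(x)}\left(\tfrac12\abs{\nabla u}^2-e^u\right)\dd y\right]
 =r^{2-n}\int_{\partial B_r(x)}\abs{\partial_\nu u}^2\dd\sigma-2r^{1-n}\int_{B_r(x)}e^u\dd y.
\]
So the source enters with the unfavourable sign in every dimension $n\geq3$. The potential in the Liouville energy is $-e^u$, the opposite of the Ginzburg--Landau situation. The paper restores monotonicity only by adding the boundary term $2r^{1-n}\int_{\partial B_r(x)}(u+2\log r)$ (Proposition~\ref{prop:ba-weiss}). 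The resulting $W_u$ is signed, tends to $-\infty$ at every regular point, and gives no upper bound on $\mathcal E_u(x,r)$. When the paper does use $W_u$, for tangent solutions, it works under an \emph{assumed} Morrey bound on the Dirichlet energy.

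Your proposed repair does not close this. Testing the equation with a cutoff (or with $\zeta(u-\bar u)$, whose weight changes sign) only gives $r^{2-n}\int_{B_r(x)}e^u\leq C_n\bigl((2r)^{2-n}\int_{B_{2r}(x)}\abs{\nabla u}^2\bigr)^{1/2}$. That presupposes Morrey control of the Dirichlet energy at the smaller scale. Moreover, even a uniform bound $r^{2-n}\int_{B_r(x)}e^u\leq\varepsilon$, inserted into the error term above, produces a loss $2\varepsilon\log(R/r)$, which is unbounded as $r\downarrow0$. Scale-uniform smallness of $\mathcal E_u(x,r)$, starting from smallness at the single scale $2$, is true a posteriori, but it is essentially the content of the theorem rather than a preliminary. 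If you grant it, your Step~2 (BMO of the Newtonian potential, then John--Nirenberg and bootstrap) is a legitimate substitute for the paper's last step, which instead uses the improved decay $r^{n-2+\frac32}$ to bound the potential directly.

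\textbf{The fallback does not supply the missing step.} Point-picking at a maximum of $r^2\sup e^{u}$ presupposes that $e^{u_k}$ is locally bounded, which is the regularity being proved for weak solutions. Even for smooth $u_k$, the rescaling $v_k(y)=u_k(x_k+r_ky)+2\log r_k$ satisfies $\int_{B_R}(\abs{\nabla v_k}^2+e^{v_k})\dd y=R^{n-2}\mathcal E_{u_k}(x_k,Rr_k)$. So it has small energy only if Step~1 is already known. In that case $e^{v_k(0)}=1$ and $e^{v_k}\leq4$ contradict small energy at once, and no defect measure is needed.

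Your claim that the stress structure ``should force the defect measure to vanish at small energy'' is exactly the hard part of the paper. It needs ingredients absent from your plan:
\begin{enumerate}
\item The splitting $u=h+v$ with $\int e^uv=\int\abs{\nabla v}^2\leq E$ and $e^h\leq C_nE$. These give an $L\log L$ bound on $e^u/E$ and hence $\norm{D^2v}_{L^1(B_1)}\leq C_nE$; this is where the Dirichlet part of $\mathcal E_u$ is used.
\item The reduction to $\op{div}(\cT(q_j)-R_j)=0$, with $q_j=\nabla v_j/\sqrt{E_j}\to0$ in $W^{1,1}$ and $R_j\to0$ in $L^1$, after subtracting the constant $a_j=e^{h_j(0)}/E_j$ from the source.
\item Theorem~\ref{thm:abstractcompact}, which rules out $L^2$ concentration of $q_j$. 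Its proof uses a Ledoux-type perturbed Sobolev inequality to locate a critical concentration scale, $A(x)x=0$ from monotonicity of the limiting covariance measure, dimension reduction to $n-2$ null directions, and planar Sobolev on two-dimensional slices.
\end{enumerate}
Only this yields the excess decay $\mathcal E_u(0,\frac12)\leq\frac18\mathcal E_u(0,2)$. Its iteration is the scale-uniform control that your Step~1 takes for granted.
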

\begin{cor}\label{coro:Hausdorff dim of singular set}
    For any stationary weak solution, its singular set has zero $\mathcal H^{n-2}$ measure.
\end{cor}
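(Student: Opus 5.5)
We deduce Corollary~\ref{coro:Hausdorff dim of singular set} from Theorem~\ref{thm:weak-epsilon} by the standard covering argument for $\varepsilon$-regularity theorems. First, we rescale. For $x\in\Omega$ and $r>0$ with $B_{2r}(x)\subset\Omega$, the function $u_{x,r}(y)=u(x+ry)+2\log r$ is again a stationary weak solution on $B_2$. Indeed, both the equation $-\Delta u=e^u$ and the stationarity identity \eqref{eq:stationarity-definition} are invariant under this scaling: $\nabla u_{x,r}=r\nabla u(x+r\cdot)$ and $e^{u_{x,r}}=r^2e^{u(x+r\cdot)}$, so every term scales by $r^2$. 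Moreover $\mathcal E_{u_{x,r}}(0,2)=\mathcal E_u(x,2r)$. Theorem~\ref{thm:weak-epsilon} therefore gives the following: if $\mathcal E_u(x,2r)\le\varepsilon_n$, then $u$ is smooth on $B_{r/2}(x)$, so $x\notin\Sing(u)$. Consequently
\[
\Sing(u)\subset\Sigma:=\Big\{x\in\Omega:\ \limsup_{r\to0}r^{2-n}\int_{B_r(x)}\big(\abs{\nabla u}^2+e^u\big)\dd y\ge 2^{2-n}\varepsilon_n\Big\}.
\]
Because smoothness is local, it is enough to work on a compact subset $K\subset\subset\Omega$. There the measure $\mu=(\abs{\nabla u}^2+e^u)\dd y$ is finite, since $u\in H^1_{\rm loc}$ and $e^u\in L^1_{\rm loc}$.

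Next comes the covering step. Fix $\delta>0$ and an open set $U$ with $\Sigma\cap K\subset U\subset\subset\Omega$. For each $x\in\Sigma\cap K$, choose $r<\delta$ with $B_r(x)\subset U$ and $r^{2-n}\mu(B_r(x))\ge 2^{1-n}\varepsilon_n$. The Vitali covering lemma yields disjoint balls $B_{r_i}(x_i)$ such that the enlarged balls $B_{5r_i}(x_i)$ cover $\Sigma\cap K$. Summing the lower bounds gives
\[
\mathcal H^{n-2}_{10\delta}(\Sigma\cap K)\le C\sum_i r_i^{n-2}\le C\varepsilon_n^{-1}\sum_i\mu(B_{r_i}(x_i))\le C\varepsilon_n^{-1}\mu(U).
\]
Hence $\mathcal H^{n-2}(\Sigma\cap K)<\infty$. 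Since $n\ge3$, it follows that $\abs{\Sigma\cap K}=0$, because $\mathcal H^{n-2}$-finite sets are Lebesgue-null.

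It remains to upgrade finiteness to $\mathcal H^{n-2}(\Sigma\cap K)=0$, and this is the only step needing care. Since $\mu$ is absolutely continuous with respect to Lebesgue measure and $\abs{\Sigma\cap K}=0$, we can choose $U\supset\Sigma\cap K$ open with $\mu(U)$ arbitrarily small. Letting $\mu(U)\to0$ and then $\delta\to0$ in the displayed estimate gives $\mathcal H^{n-2}(\Sigma\cap K)=0$. Exhausting $\Omega$ by countably many compact sets $K$ then shows $\mathcal H^{n-2}(\Sing(u))=0$.
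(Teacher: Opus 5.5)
Your proof is correct and follows the same route as the paper: you rescale Theorem~\ref{thm:weak-epsilon} to place $\Sing(u)$ inside the set where $\abs{\nabla u}^2+e^u$ has positive upper $(n-2)$-density, and then show that set is $\mathcal H^{n-2}$-null by a Vitali covering. The only difference is in the last step: the paper splits the density into a bounded part and a small $L^1$ tail, whereas you first get $\mathcal H^{n-2}$-finiteness, hence Lebesgue-nullity, and then use absolute continuity of $\mu$ to make $\mu(U)$ small; both are standard, and you should require $\overline{B_{2r}(x)}\subset\Omega$ so that $u_{x,r}\in H^1(B_2)$ with $e^{u_{x,r}}\in L^1(B_2)$.
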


\begin{rem}
Several comments on Theorem \ref{thm:weak-epsilon} are as follows.
\begin{enumerate}
\item Da Lio's three-dimensional argument uses the following Lorentz-space estimate for the Green potential on suitable spheres:
\[
 \norm{\nabla v}_{L^2(\partial B_\rho)}^2
 \leq C\norm{\nabla v}_{L^{2,\infty}(\partial B_\rho)}
       \norm{\nabla v}_{L^{2,1}(\partial B_\rho)}.
\]
The second factor is controlled by the embedding 
\[
W^{1,1}(\partial B_\rho)\hookrightarrow L^{2,1}(\partial B_\rho).
\]
In dimension $n$, the corresponding Sobolev exponent is $p=\frac{n-1}{n-2}$, whose conjugate is $p'=n-1$. These two exponents are both $2$ precisely when $n=3$. For $n>3$, the embedding gives $L^{p,1}$ control and the duality estimate would require $L^{n-1,\infty}$ control of the other gradient factor. This estimate does not hold in higher dimensions, so the arguments in \cite{DaLio} cannot be applied here.
\item If we impose the additional stability assumption,
\[
 \int_\Omega e^u\varphi^2\dd x\leq\int_\Omega\abs{\nabla\varphi}^2\dd x,
 \quad \forall\varphi\in C^\infty_0(\Omega),
\]
there is a more direct proof of this $\va$-regularity theorem, see the second author's work \cite{Wang12}.

\item Compared to the two dimensional result in Brezis-Merle \cite{BM91}, we need the Dirichlet energy condition in this $\va$-regularity theorem. We expect that this term cannot be removed. However, it can be relaxed, see Theorem \ref{thm:ba-smallmass}.

\item Our proof establishes existence of a dimensional threshold  $\varepsilon_n$ through compactness. It does not optimize its numerical value. Concerning  the singular set, we only have the  zero $\mathcal H^{n-2}$-measure conclusion and do not claim a sharper dimension bound, rectifiability of arbitrary singular sets, or a classification of all singular profiles at a singular point. 
\end{enumerate}
\end{rem}

\subsection{Difficulties and strategies}
A standard argument to obtain the $\ep$-regularity is by establishing an excess decay estimate. Here it reads as
\begin{align}\label{eqn:excess decay in introduction}
    \mathcal E_u\left(x,\frac{R}{4}\right)\leq\frac{1}{8}\mathcal E_u(x,R),
\end{align}
whenever the outer energy is sufficiently small. 

The main step to obtain this excess decay estimate is a harmonic approximation procedure for solutions with small energy. Fix $n\geq3$ and let $u_j\in H^1(B_2)$ be stationary weak solutions of $-\Delta u_j=e^{u_j}$, with $e^{u_j}\in L^1(B_2)$ and
\[
 E_j:=\int_{B_2}(\abs{\nabla u_j}^2+e^{u_j})\dd x\to0.
\]
Let $h_j$ be the harmonic function on $B_2$ with boundary trace $u_j$, and set $v_j=u_j-h_j$. Thus, $v_j\in H^1_0(B_2)$ solves $-\Delta v_j=e^{u_j}$. We normalize by setting 
\[
q_j=\frac{\nabla v_j}{\sqrt{E_j}},\qquad g_j=\frac{e^{u_j}}{E_j}.
\]
The energy orthogonality gives $\norm{q_j}_{L^2(B_2)}\leq1$. The key for this harmonic approximation procedure is to prove 
\begin{align}\label{key estimate on nonlinear effect}
    \norm{q_j}_{L^2(B_{\frac{1}{2}})}\to0,
\end{align}
 that is, a good control on the effect from the nonlinear term in the equation. Only with this estimate, we can say that the harmonic function $h_j$ is a good approximation to the original solution $u_j$.

For the proof of \eqref{key estimate on nonlinear effect}, as we will explain below, we need  control of $Dq_j$ in $L^1$, or equivalently, 
\begin{align}\label{L1 Hessian estimate}
    \norm{D^2v_j}_{L^1(B_1)}\leq C_nE_j.
\end{align}
However it is well known that the  Calderzon-Zygmund estimate fails at the end point,  so an $L^1$ bound on $g_j$ alone does not imply such an $L^1$ control. Instead, we use the estimate 
\[
\int_{B_2}e^{u_j}v_j\dd x\leq E_j
\]
and the harmonic bound $e^{h_j}\leq C_nE_j$ on $B_{\frac{3}{2}}$. These give a control
\[
 \int_{B_{\frac{3}{2}}}g_j\log^+g_j\dd x\leq C_n,\]
and in turn  \eqref{L1 Hessian estimate}. This is the main reason that we need the Dirichlet energy in the assumptions of $\ep$-regularity.

Combining \eqref{L1 Hessian estimate} with the Green estimate $\norm{\nabla v_j}_{L^1(B_2)}\leq C_nE_j$, we obtain
\[
q_j\to0\text{ in }W^{1,1}(B_1).
\]
We also have
\[
g_j-a_j\to0\text{ in }L^1(B_1),\quad \text{where}~~ a_j=\frac{e^{h_j(0)}}{E_j}.
\]
Subtracting the normalized harmonic stress and the constant $a_jI$ from the normalized stationary identity gives
\begin{align}\label{stationary field in introduction}
 \op{div}\left(q_j\otimes q_j-\frac{1}{2}\abs{q_j}^2I-R_j\right)=0,
 \quad \norm{R_j}_{L^1(B_1)}\to0.
\end{align}
Here $R_j$ consists of mixed gradient terms and the source difference $g_j-a_j$. The convergence of $ R_j $ follows from 
\[
\norm{\nabla h_j}_{L^\infty(B_1)}\leq C_n\sqrt{E_j}
\]
and the preceding estimates. See \eqref{eq:green-l1}, \eqref{eq:harmonic-bounds} and \eqref{eq:g-strong} in the proof of Lemma~\ref{lem:harmapprox}.

To prove \eqref{key estimate on nonlinear effect}, it suffices to   rule out the $L^2$-concentration for $q_j$.
The $W^{1,1}$ bounds alone do not imply this. We use a perturbed Sobolev estimate based on Ledoux \cite{Ledoux} and a maximal estimate to choose rescalings that preserve the vanishing errors. Persistent energy (in the sense of \eqref{eq:counter}, see Lemma~\ref{lem:critical}) would yield a non-zero positive matrix-valued measure $A$ on $\R^n$, the weak limit of the rescaled quadratic tensors, satisfying
\[
 \op{div}\left(A-\frac{1}{2}(\tr A)I\right)=0,
 \quad (\tr A)(B_R)\leq R^{n-2}\quad(R\geq1).
\]
The normalization in Lemma~\ref{lem:critical} and the monotonicity formula \eqref{eq:monotone} give $A(x)x=0$ outside $\overline B_1$. We choose the centers in this exterior region when applying Lemma~\ref{lem:gain}. As in the classical Federer dimension reduction principle, repeated changes of center and scale give $n-2$ constant directions with vanishing component energy. The planar Sobolev inequality on the remaining two-dimensional slices gives a contradiction, and hence $q_j\to0$ locally in $L^2$.

For a stationary solution $u$ on a ball $B_R(x)$, the harmonic approximation and $g_j-a_j\to0$ give the excess decay estimate \eqref{eqn:excess decay in introduction}  by contradiction and scaling, see  Lemma~\ref{lem:energy-improvement}.
Iteration bounds the Newtonian potential of $e^u$, and elliptic regularity then proves smoothness.

\subsection{Organization of this paper}
Section~\ref{sec:preliminaries} establishes the analytic and measure estimates used in the compactness argument. Section~\ref{sec:compactness} proves the compactness theorem for quadratic stresses (Theorem~\ref{thm:abstractcompact}) and derives a local energy estimate from it. Section~\ref{sec:liouville} applies them to the Liouville equation and proves Theorem~\ref{thm:weak-epsilon}. Section~\ref{sec:calibrations} tests the energy hypothesis and the contraction factor against explicit solutions. In Section~\ref{sec:blowup} we discuss a generalization of Theorem~\ref{thm:weak-epsilon} and the application of $\ep$-regularity results to the blow up analysis for the Liouville equation. 

\subsection{Notations and conventions}
We collect the conventions used throughout the paper.
\begin{itemize}
\item The dimension is a fixed integer $n\geq3$, and $s=n-2$. The notation $B_r(x)$ denotes the open Euclidean ball of radius $r$ centered at $x$, and $B_r=B_r(0)$. A superscript indicates the ambient dimension when needed.
\item Lebesgue measure is unnormalized. We write $\omega_n=\abs{B_1}$, use $\mathcal H^d$ for $d$-dimensional Hausdorff measure, and write $\mu\llcorner E$ for restriction of a measure to $E$. Compact inclusion is denoted by $\subset\subset$.
\item The derivative matrix is denoted by $Dq$, and matrix divergence is taken row by row: $(\op{div} R)_i=\sum_{k=1}^n\partial_k R_{ik}$. Tensor products satisfy $(q\otimes p)_{ik}=q_i p_k$, and $I$ is the identity matrix. Unwritten contractions in distributional stress identities use the Euclidean matrix inner product.
\item A positive semidefinite matrix-valued Radon measure $A$ is written $A=P\mu$, with $\mu=\tr A$, $P\geq0$, and $\tr P=1$ almost everywhere with respect to $\mu$. Assertions such as $Av=0$ and $A(x)x=0$ mean $Pv=0$ and $P(x)x=0$ almost everywhere with respect to $\mu$. If a matrix $A$ is semidefinite, we say it is nonnegative or $A\geq 0$.
\item Norms without a domain use the domain of the surrounding estimate. The constants $C_n$ may change from line to line and depend only on $n$. Additional dependence is indicated by a subscript. Regularity and pointwise estimates refer to the smooth representative of a weak solution.
\end{itemize}

\section{Preliminaries}\label{sec:preliminaries}
In this section, we establish the perturbed Sobolev estimate and the radial identity for stationary covariance measures that will be used in the compactness proof. General background on Radon measures and singular integrals is available in \cite{Mattila,Stein}.

\subsection{An improved Sobolev estimate with an \texorpdfstring{$L^2$}{L2} error}

The heat semigroup defines the negative Besov norm used in the perturbed estimate. Let $P_t=e^{t\Delta}$ be the Euclidean heat semigroup and set
\[
 \norm{a}_{\dot B^{-1}_{\infty,\infty}}
 =\sup_{t>0}\sqrt t\,\norm{P_ta}_{L^\infty}.
\]

\begin{lem}\label{lem:robust}
Suppose $f=a+b\in W^{1,1}(\R^n)\cap L^2(\R^n)$, where
\[
\norm{a}_{\dot B^{-1}_{\infty,\infty}}\leq L<+\infty,\quad b\in L^2(\R^n).
\]
Then
\begin{equation}\label{eq:robust}
 \norm{f}_2^2\leq C_n L\norm{Df}_1+C_n\norm{b}_2^2.
\end{equation}
The analogous estimate holds componentwise for vector-valued functions.
\end{lem}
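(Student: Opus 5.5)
We follow Ledoux's proof of the refined Sobolev inequality $\norm{f}_2^2\leq C\norm{f}_{\dot B^{-1}_{\infty,\infty}}\norm{Df}_1$, but apply the heat semigroup test only to the part $a$ and handle $b$ by Cauchy--Schwarz. Since the estimate is homogeneous in $f$, it suffices to prove it for scalar $f$; the vector case follows by applying it to each component and summing. Write
\[
 \norm{f}_2^2=\int f(f-P_tf)\dd x+\int fP_tf\dd x
\]
for a parameter $t>0$ that we choose later.

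\textbf{Step 1: the high-frequency part.} We use $f-P_tf=-\int_0^t\Delta P_sf\dd s$ and self-adjointness. This gives
\[
 \int f(f-P_tf)\dd x=\int_0^t\int\abs{\nabla P_{s/2}f}^2\dd x\dd s\leq t\norm{\nabla f}_2^2 .
\]
Because we only have $L^1$ control of $Df$, we do not follow this route. Instead we use the standard bound
\[
 \norm{f-P_tf}_1\leq C_n\sqrt t\norm{Df}_1 ,
\]
which comes from $\norm{\nabla p_s}_{L^1}\leq C_ns^{-1/2}$ and $\norm{f(\cdot-y)-f}_1\leq\abs y\norm{Df}_1$. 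To pair this with $f$ in $L^\infty$ we need a level-set truncation, exactly as in Ledoux. For each $k\in\Z$ and $\lambda>0$, let $f_k$ be the truncation of $f^+$ between levels $2^k\lambda$ and $2^{k+1}\lambda$, and similarly for $f^-$. Then $\sum_k\norm{Df_k}_1\leq\norm{Df}_1$, and truncations commute with the $W^{1,1}$ structure.

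\textbf{Step 2: Ledoux's argument with the error term.} We apply the argument to each truncation $f_k$ at the scale $\sqrt t_k\sim 2^k\lambda/L$. Ledoux's argument compares $\abs{f_k}$ with $\abs{P_{t_k}f_k}$ on the set where $f_k$ is large. The point is that $\abs{P_{t_k}f}$ should be a small fraction of $2^k\lambda$ there. Splitting $P_{t_k}f=P_{t_k}a+P_{t_k}b$, the first piece is at most $L/\sqrt{t_k}$, and we choose the scale so that this is $\leq 2^{k-2}\lambda$. The second piece is not pointwise small. So we add the bad set
\[
 \{\abs{P_{t_k}b}>2^{k-2}\lambda\}
\]
to the exceptional region. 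On this set we bound the measure using the Hardy--Littlewood maximal function: $\abs{P_tb}\leq Mb$ uniformly in $t$. Summing the contributions $\sum_k(2^k\lambda)^2\abs{\{Mb>2^{k-2}\lambda\}}$ gives at most $C_n\norm{Mb}_2^2\leq C_n\norm{b}_2^2$ by the maximal theorem. The good part follows Ledoux verbatim. There $\abs{f_k}$ is at most of order $\abs{f_k-P_{t_k}f_k}$ up to the truncation errors, and this gives
\[
 (2^k\lambda)^2\abs{\{\abs f\geq 2^{k+1}\lambda\}\setminus\{Mb>2^{k-2}\lambda\}}\lesssim 2^k\lambda\,\sqrt{t_k}\norm{Df_{k'}}_1\lesssim L\norm{Df_{k'}}_1 ,
\]
with $k'$ a nearby level. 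Summing over $k$ and using the layer-cake formula then gives
\[
 \norm f_2^2\leq C_n\sum_k(2^k\lambda)^2\abs{\{\abs f>2^k\lambda\}}\leq C_nL\norm{Df}_1+C_n\norm b_2^2 .
\]

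\textbf{Main obstacle.} The delicate point is the interaction between truncation and the heat semigroup. Ledoux's argument needs $\norm{P_tf_k}_\infty$ small, and this is inherited from $\norm{P_tf}_\infty$ only through the positivity of $P_t$. One must check that the level-set truncation $f_k$ satisfies $P_tf_k\leq P_t\abs f\leq\abs{P_ta}+P_t\abs b$, so positivity lets us pass to $\abs b$. This is harmless because $M\abs b$ is still controlled in $L^2$. It also matters that $a$ itself need not be signed. If this bookkeeping fails, I would fall back on an alternative route: first prove the weak-type inequality
\[
 \lambda^2\abs{\{\abs f>\lambda\}}\lesssim L\norm{Df}_1+\norm b_2^2
\]
by Ledoux's single-level argument, and then upgrade it to the strong inequality by summing over dyadic truncations, as above.
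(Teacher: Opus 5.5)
Your architecture matches the paper's: Ledoux's truncation, the pseudo-Poincar\'e bound $\norm{g-P_tg}_1\leq C_n\sqrt t\norm{Dg}_1$ at scale $\sqrt t=L/u$, and an extra exceptional set $\{P_t\abs b\geq u\}$ controlled by the $L^2$ maximal theorem. (You wrote $\sqrt{t_k}\sim 2^k\lambda/L$, which is inverted; it should be $L/(2^k\lambda)$, and your later computation uses the correct scale.)

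The gap is in the step you flag as the main obstacle. The chain $P_tf_k\leq P_t\abs f\leq\abs{P_ta}+P_t\abs b$ is false. With $b=0$ it reads $P_t\abs a\leq\abs{P_ta}$, which fails for every sign-changing $a$. Moreover, $P_t\abs a$ is not controlled by $\norm{a}_{\dot B^{-1}_{\infty,\infty}}$ at all. Positivity of $P_t$ is not what makes Ledoux's argument work, so your proof has no valid upper bound for $P_{t_k}f_k$ on the set where $\abs f$ is large.

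The mechanism that works, and the one the paper uses, has four ingredients:
\begin{enumerate}
\item Use the symmetric truncation $f_u=\operatorname{sgn}(f)\min\{(\abs f-u)_+,(c-1)u\}$. Truncating $f^+$ and $f^-$ separately leaves $-f^-$ inside $f-f_k$, and its heat flow is uncontrolled.
\item Write $P_{t_u}f_u=P_{t_u}f-P_{t_u}(f-f_u)$, using $\abs{f-f_u}\leq u+\abs f\mathbf 1_{\{\abs f>cu\}}$.
\item Test on $\{\abs f\geq 6u\}$, where $\abs{f_u}\geq 5u$. This leaves a margin over the contributions $u$ from $a$, $u$ from the low part, $P_{t_u}\abs b$, and $P_{t_u}(\abs f\mathbf 1_{\{\abs f>cu\}})$.
\item The last of these produces a third exceptional set, whose $\dd(u^2)$-integral is $\frac{2}{c}\norm{f}_2^2$. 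It must be absorbed into the left-hand side, and this is exactly where $f\in L^2$ and a large ratio $c$ are essential.
\end{enumerate}

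Your ratio-$2$ dyadic scheme cannot do this. On $\{\abs f\geq 2^{k+1}\lambda\}$ one has $\abs{f_k}=2^k\lambda$, which is exactly the size of the low-part term $\min(\abs f,2^k\lambda)$ in $f-f_k$, so there is no margin. A ratio-$2$ tail also contributes a multiple of $\norm f_2^2$ with a constant of order one, which cannot be absorbed.

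Your fallback has the same defect. To apply a weak-type inequality to each $f_k$, you would need to split $f_k$ into a part with $\dot B^{-1}_{\infty,\infty}$ control plus an $L^2$ part. Truncation does not preserve the splitting $f=a+b$, so the Besov information has to be used on $f$ itself, as above.
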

\begin{proof}
We adapt the truncation argument of Ledoux \cite[Theorem~1]{Ledoux}. The case $L=0$ is immediate, so suppose $L>0$. Fix $c>144$. For $u>0$, put
\[
 t_u=\left(\frac{L}{u}\right)^2,
 \quad f_u=\operatorname{sgn}(f)\min\{(\abs{f}-u)_+,(c-1)u\}.
\]
Then
\[
 \abs{P_{t_u}f}\leq u+P_{t_u}\abs{b},
 \quad \abs{f-f_u}\leq u+\abs{f}\mathbf1_{\{\abs{f}>cu\}}.
\]
On $\{\abs{f}\geq6u\}$ we have $\abs{f_u}\geq5u$. Consequently, that set (that is, $\{\abs{f}\geq6u\}$) is contained in the union of
\[
 \{\abs{f_u-P_{t_u}f_u}\geq u\},\quad
 \{P_{t_u}\abs{b}\geq u\},\quad
 \{P_{t_u}(\abs{f}\mathbf1_{\{\abs{f}>cu\}})\geq u\}.
\]
Integrate their measures with respect to $\dd (u^2)$. The left side is $\frac{\norm{f}_2^2}{36}$. Denote the three sets above, in the displayed order, by $S_1(u),S_2(u),S_3(u)$, and put
\begin{equation}\label{eq:robust-contributions}
\begin{aligned}
 I_k&:=\int_0^{+\infty}\abs{S_k(u)}\dd(u^2),\quad k=1,2,3,\\
 \frac{\norm{f}_2^2}{36}
 &=\int_0^{+\infty}\abs{\{\abs{f}\geq6u\}}\dd(u^2)
 \leq I_1+I_2+I_3.
\end{aligned}
\end{equation} The heat pseudo-Poincar\'e inequality \cite[equation~(4)]{Ledoux}
\[
 \norm{g-P_tg}_1\leq C_n\sqrt t\,\norm{Dg}_1
\]
and the Sobolev chain rule show that the first contribution  $I_1$ in \eqref{eq:robust-contributions}  is at most
\[
 2C_nL\int_0^{+\infty}\frac{\dd u}{u}
 \int_{\{u<\abs{f}<cu\}}\abs{Df}\dd x
 =2C_nL\log(c)\norm{Df}_1.
\]
The second contribution  $I_2$ in \eqref{eq:robust-contributions}  is at most $C_n\norm{b}_2^2$, by the $L^2$ boundedness of the heat maximal operator. For the third,  which is $I_3$ in \eqref{eq:robust-contributions},  positivity and preservation of the integral by $P_t$, followed by Fubini theorem, yield
\[
 \int_0^{+\infty}\frac{1}{u}\int_{\{\abs{f}>cu\}}\abs{f}\dd x\dd (u^2)
 =\frac{2}{c}\norm{f}_2^2.
\]
Absorbing this last term  into the left-hand side of \eqref{eq:robust-contributions}, using $\frac{1}{36}-\frac{2}{c}>0$,  proves \eqref{eq:robust}. The vector-valued version follows by summing the scalar estimates for its components.
\end{proof}

\begin{rem}
We also use the elementary implication
\begin{equation}\label{eq:morreybesov}
 \int_{B_r(x)}\abs{a}^2\dd x\leq M r^{n-2}\quad\hbox{for every }x,r
 \quad\Rightarrow\quad
 \norm{a}_{\dot B^{-1}_{\infty,\infty}}\leq C_n\sqrt M.
\end{equation}
Indeed, Cauchy--Schwarz inequality bounds $\int_{B_r(x)}\abs{a}$ by $C_n\sqrt M r^{n-1}$. Summing the heat kernel over dyadic annuli yields \eqref{eq:morreybesov}.
\end{rem}

\subsection{Radial monotonicity for covariance measures}

In this subsection, following Moser \cite{Moser}, we establish a monotonicity formula for  the   density measure of a stationary field. The proof uses the positive semidefiniteness combined with the radial stationary identity. 

\begin{lem}\label{lem:monotone}
Let $A$ be a positive semidefinite matrix-valued Radon measure on $\R^n$. Let $\mu=\tr A$, and suppose $\op{div}(A-\frac{1}{2}\mu I)=0$. At continuity radii $0<r<R$,
\begin{equation}\label{eq:monotone}
 R^{-s}\mu(B_R)-r^{-s}\mu(B_r)
 =2\int_{B_R\backslash B_r}\abs{x}^{-s}
 \left(\frac{x}{\abs{x}}\right)^T A(\dd x)\left(\frac{x}{\abs{x}}\right)\geq0.
\end{equation}
In particular $\mu(\{0\})=0$ and $r^{-s}\mu(B_r)$ has a non-decreasing representative.
\end{lem}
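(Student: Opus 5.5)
The plan is to test the distributional identity $\op{div}(A-\frac12\mu I)=0$ against the radial vector field $X(x)=\varphi(\abs{x})\,x$, where $\varphi$ is a smooth cutoff approximating $\abs{x}^{-n}\mathbf 1_{\{r<\abs{x}<R\}}$ suitably, and then let the cutoffs converge. Concretely, I first pair the identity with $X$. Since $\op{div}(T)=0$ for a symmetric matrix measure $T$ means $\int DX:\,dT=0$ for every $X\in C^1_c(\R^n;\R^n)$, we get
\[
 \int DX:\dd A-\frac12\int \op{div}X\dd\mu=0 .
\]
For $X=\psi(\abs{x})x$ we have $DX=\psi(\abs x) I+\psi'(\abs x)\abs x\,\hat x\otimes\hat x$ with $\hat x=x/\abs x$, and $\op{div}X=n\psi+\psi'\abs x$. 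Using $\tr A=\mu$, the identity becomes
\[
 \Big(1-\frac n2\Big)\int\psi\dd\mu-\frac12\int\psi'(\abs x)\abs x\dd\mu+\int\psi'(\abs x)\abs x\,\hat x^TA\hat x=0 .
\]
This is the standard Pohozaev/monotonicity computation, with $-s/2=1-n/2$.

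Next I choose $\psi(\rho)=\eta(\rho)$, where $\eta$ is a smooth approximation of $\mathbf 1_{[0,\tau]}$. This gives, in the limit at continuity radii $\tau$, the differential identity
\[
 -s\,\mu(B_\tau)+\tau\frac{d}{d\tau}\mu(B_\tau)=2\tau\frac{d}{d\tau}\int_{B_\tau}\hat x^TA\hat x
\]
in the sense of distributions in $\tau$. Equivalently, I directly take $\psi=\eta_\delta$ smoothing $\mathbf 1_{[0,\tau]}$ and integrate against the weight $\tau^{-s-1}\,d\tau$ over $(r,R)$. Multiplying by $\tau^{-s-1}$ gives $\frac{d}{d\tau}(\tau^{-s}\mu(B_\tau))=2\tau^{-s}\,d\nu(\tau)$, where $\nu$ is the pushforward of $\hat x^TA\hat x$ under $\abs x$. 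Integrating from $r$ to $R$ then yields \eqref{eq:monotone}. To avoid the distributional calculus in $\tau$, I can equivalently plug in $\psi(\rho)=\int_\rho^\infty \tau^{-s-1}\chi(\tau)\,d\tau$-type test functions, with $\chi$ approximating $\mathbf 1_{(r,R)}$, and pass to the limit by dominated convergence. This requires $\mu(\partial B_r)=\mu(\partial B_R)=0$, which is exactly the continuity-radii hypothesis.

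Nonnegativity of the right-hand side follows from $A\ge0$, since then $\hat x^TA\hat x\ge0$ as a measure. The main technical obstacle is the origin: $X$ must be $C^1_c$, and $\abs x^{-s}$ is singular. This is avoided because the chosen $\psi$ is constant near $0$ whenever $r>0$, so $X$ is smooth there. Monotonicity of $r\mapsto r^{-s}\mu(B_r)$ on the co-countable set of continuity radii then gives a non-decreasing representative, taking right limits. Finally, for $\mu(\{0\})=0$ I use monotonicity as $r\to0$: for fixed $R$, $r^{-s}\mu(B_r)\le R^{-s}\mu(B_R)<\infty$, so $\mu(B_r)\le Cr^{s}\to0$ because $s=n-2\ge1$. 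Hence $\mu(\{0\})=0$.
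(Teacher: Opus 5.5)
Your argument is correct and is essentially the paper's proof. The paper mollifies $A$ and $\mu$, applies the divergence theorem to $(A-\frac{1}{2}\mu I)x$ on balls, and then removes the mollification at continuity radii; you instead regularize the radial test field $\psi(\abs{x})x$, which is the dual form of the same radial computation. Two implicit points are worth stating: $A\geq0$ with $\tr A=\mu$ gives $\abs{A_{ik}}\leq\mu$ as measures, so continuity radii of $\mu$ are null for every entry of $A$, which your dominated-convergence step $\chi\to\mathbf 1_{(r,R)}$ needs; and your opening description of the cutoff as approximating $\abs{x}^{-n}\mathbf 1_{\{r<\abs{x}<R\}}$ is inaccurate and should simply be replaced by the primitive $\psi(\rho)=\int_\rho^{\infty}\tau^{-s-1}\chi(\tau)\dd\tau$ that you actually use.
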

\begin{proof}
First mollify $A$ and $\mu$ with the same non-negative compact mollifier. This preserves positive semidefiniteness, the trace relation, and the linear divergence equation. For the resulting smooth fields, the divergence theorem applied to $(A-\frac{1}{2}\mu I)x$ yields
\[
 -\frac{s}{2}\int_{B_t}\mu\dd x
 =t\int_{\partial B_t}\left(e_r^TAe_r-\frac{1}{2}\mu\right)\dd \sigma.
\]
Differentiating $t^{-s}\int_{B_t}\mu$ and integrating in $t$ proves \eqref{eq:monotone} for the mollified fields. Passing to the limit at continuity radii proves the formula for the measures. All weights are bounded and continuous on the annulus, and its boundary has zero $\mu$ measure. Monotonicity bounds $\mu(B_r)$ by a constant times $r^s$ as $r\downarrow0$, so there is no atom at the origin.
\end{proof}

\section{Compactness for quadratic stresses}\label{sec:compactness}

The following compactness theorem is a key ingredient in the proof of Theorem~\ref{thm:weak-epsilon}, as it yields the harmonic approximation needed for the excess decay.
For $q\in\R^n$, write
\[
 \cT(q)=q\otimes q-\frac{1}{2}\abs{q}^2I.
\]
\begin{thm}\label{thm:main}\label{thm:abstractcompact}
Let $n\geq3$. Suppose
\[
 q_j\in W^{1,1}(B_2;\R^n)\cap L^2(B_2;\R^n),
 \quad R_j\in L^1(B_2;\R^{n\times n}),
\]
satisfy
\[
\begin{aligned}
 &\norm{q_j}_{L^1(B_2)}+\norm{Dq_j}_{L^1(B_2)}+\norm{R_j}_{L^1(B_2)}\to0,\\
 &\sup_j\norm{q_j}_{L^2(B_2)}<+\infty,
 \quad\op{div}\left(\cT(q_j)-R_j\right)=0\quad\hbox{in }\mathcal D'(B_2).
\end{aligned}
\]
Then $q_j\to0$ in $L^2(B_1)$.
\end{thm}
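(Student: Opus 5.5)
We argue by contradiction. Suppose, after passing to a subsequence, $\int_{B_1}\abs{q_j}^2\geq\delta>0$. Since $q_j\to0$ in $L^1$ and $\norm{q_j}_{L^2}$ is bounded, the measures $\mu_j=\abs{q_j}^2\dd x$ converge weakly to a measure $\mu$, and the tensors $q_j\otimes q_j\dd x$ converge to a positive semidefinite matrix measure $A=P\mu$ with $\tr A=\mu$. The limit $\mu$ has no absolutely continuous part because $q_j\to0$ in $L^1$; it is a pure concentration measure. Passing to the limit in $\op{div}(\cT(q_j)-R_j)=0$, using $\norm{R_j}_{L^1}\to0$, gives $\op{div}(A-\frac12\mu I)=0$ on $B_2$. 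The first task is therefore to show that $\mu\llcorner B_1\neq0$, i.e.\ that the $L^2$ mass cannot simply vanish. Lemma~\ref{lem:robust} is the tool here. Cut off $q_j$ with $\varphi\in C^\infty_0(B_2)$. Then $\norm{D(\varphi q_j)}_1\to0$. If the Morrey quantity $\sup_{x,r} r^{2-n}\int_{B_r(x)}\abs{q_j}^2$ stayed bounded by $M$, then \eqref{eq:morreybesov} would give $\norm{\varphi q_j}_{\dot B^{-1}_{\infty,\infty}}\leq C_n\sqrt M$. Lemma~\ref{lem:robust} with $b=0$ would then force $\norm{\varphi q_j}_2\to0$, a contradiction.

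So the Morrey quantity must blow up, and we rescale at a critical center and scale. Using a maximal-function/stopping argument, we choose $x_j$ and $r_j\to0$ with the following properties: the rescaled fields $\tilde q_j(y)=r_j^{(n-2)/2}q_j(x_j+r_jy)$ satisfy $\int_{B_1}\abs{\tilde q_j}^2\approx1$ and $\int_{B_R(z)}\abs{\tilde q_j}^2\leq R^{n-2}$ for $R\geq1$. The point of the scaling is that $\norm{D\tilde q_j}_{L^1}$ and the rescaled $\norm{\tilde R_j}_{L^1}$ both pick up the factor $r_j^{(n-2)/2}\cdot r_j^{1-n}\cdot r_j^{n}\cdots$, i.e.\ they scale like $r_j^{-(n-2)/2}$ times the original. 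The centers must therefore be chosen so that these errors still vanish. I would select them via a maximal estimate that compares $\abs{Dq_j}$ and $\abs{R_j}$ against $\abs{q_j}^2$ at the critical scale, discarding a small exceptional set; this is the "preserve vanishing errors" step.

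Next we localize the robust estimate. Split $\tilde q_j=a+b$, where $a$ carries the part with Morrey bound $1$ at scales $\geq1$ and $b$ is the small-scale remainder in $L^2$. Lemma~\ref{lem:robust} then shows that the mass in $B_1$ cannot disappear in the limit (an improvement of the type of Lemma~\ref{lem:gain}, presumably). Passing to the limit yields a nonzero positive semidefinite measure $A$ on $\R^n$ with
\[
\op{div}\left(A-\tfrac12(\tr A)I\right)=0,\qquad (\tr A)(B_R)\leq R^{n-2}\ (R\geq1).
\]
We also have $(\tr A)(B_1)\approx1$, which is essentially maximal density at scale $1$. By Lemma~\ref{lem:monotone}, $R^{-s}\mu(B_R)$ is non-decreasing and bounded by $1$ for $R\geq1$. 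The normalization makes it equal to its maximal value from $R=1$ on, so the radial integrand in \eqref{eq:monotone} vanishes, i.e.\ $A(x)x=0$ for $\abs{x}>1$.

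The main obstacle is the final dimension reduction, in Federer's manner. We pick a point $x_1$ with $\abs{x_1}>1$ in $\supp\mu$ and re-blow-up there, again choosing scales with the same normalization. This produces a new limit measure that is invariant, i.e.\ has vanishing component energy, in the direction $x_1$ as well as radially. Iterating, we obtain $n-2$ independent constant directions $e_3,\dots,e_n$ along which $Pe_k=0$ and $\mu$ is translation-invariant. The limit then reduces to a two-dimensional stationary field on slices: there $q$ effectively takes values in the complementary plane and depends only on two variables, with the Morrey/density bound of order $R^{0}$. On a slice the critical scaling is $L^2$, and the planar Sobolev inequality $\norm{f}_{L^2(\R^2)}\leq C\norm{Df}_{L^1(\R^2)}$ applies. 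Applied to the approximating sequence, it shows that the slice energy is controlled by $\norm{D\tilde q_j}_{L^1}\to0$, contradicting nontriviality of $A$. I expect the delicate points to be: keeping the error terms vanishing through each re-blow-up, and justifying the passage from the limit measure back to the sequence on slices (via Fubini and a slicing choice). Together these give the contradiction, so $q_j\to0$ in $L^2(B_1)$.
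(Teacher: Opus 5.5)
Your overall architecture matches the paper's: robust Sobolev to force concentration, critical rescaling, radial kernel from monotonicity, gaining zero directions, and planar Sobolev at the end. However, the terminal step has a genuine gap. For $F_j(z)=\int\eta(t)^2\abs{q_j(z,t)}^2\dd t$, the slice-wise planar Sobolev inequality gives $\sqrt{F_j(z)}\leq C\int(\eta\abs{D_tq_j}+\abs{D_t\eta}\,\abs{q_j})\dd t$. So you only learn $\sqrt{F_j}\to0$ in $L^1_z$, i.e.\ $F_j\to0$ \emph{in measure}. That does not bound $\int F_j\dd z$. Indeed, without the equation, an estimate $\int F\dd z\leq C(\norm{q}_1+\norm{Dq}_1)^2$ fails by scaling when $n\geq3$. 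The approximating fields do not depend only on $t$; only the limit measure is invariant along $V$. Moreover, $F_j\dd z\rightharpoonup\theta\dd z$ is compatible with $F_j\to0$ in measure, because $F_j$ may concentrate on small $z$-sets, and no choice of slices prevents this.

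The missing input is the $V$-rows of the stress equation. Integrated against $\eta^2(t)$, they give $\partial_iF_j=\sum_k\partial_k(E_j)_{ik}+(b_j)_i$ with $E_j,b_j\to0$ in $L^1$, since $\int\abs{\operatorname{proj}_Vq_j}^2\to0$ (use Cauchy--Schwarz for the mixed terms $(q_j)_i(q_j)_{t_a}$). A strong constancy lemma based on the weak type $(1,1)$ bound for $\partial_i\partial_k\Delta^{-1}$ (Lemma~\ref{lem:constancy}) upgrades this to $F_j\to\theta$ in measure with $\theta>0$. Only that contradicts the Sobolev conclusion. This is also why the induction must be carried by sequences with vanishing $V$-component energy (the classes $\mathcal C(V)$), not by limit measures.

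The selection of the critical center is also assembled in the wrong order. Lemma~\ref{lem:robust} with $b=0$ only shows that the density blows up somewhere. That may be at a center where $\abs{Dq_j}+\abs{R_j}$ is not small relative to $\abs{q_j}^2$ at smaller scales, and then the rescaled errors need not vanish. Comparing only at the critical scale is not enough either, since bounds are needed on $B_R$ for all $R\geq1$. What works is the following. Call $x$ good ($x\in G_j$) if $\beta_j(B_r(x))\leq\kappa_j\mu_j(B_r(x))$ for all $r\leq\rho_0$, where $\beta_j=(\abs{Dq_j}+\abs{R_j}+\abs{\operatorname{proj}_Vq_j}^2)\dd x$. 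Bound the $\mu_j$-mass of bad centers by $C\kappa_j^{-1}\beta_j(B_2)$ via Besicovitch. Then apply Lemma~\ref{lem:robust} to $\psi q_j$, with $\psi$ a cutoff equal to one on $B_1$, $a=\psi q_j\mathbf 1_{G_j}$ and the nonzero $b=\psi q_j\mathbf 1_{G_j^c}$, to force density above $m_j$ at a good center.

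Let $\eta_j\to0$ be the total error size, and take $\kappa_j=\eta_j^{1/4}$, $m_j=\eta_j^{-1/4}$, the outermost radius of that level, and $Q_j(z)=\frac{r_j}{\sqrt{m_j}}q_j(x_j+r_jz)$. Your factor $r_j^{(n-2)/2}$ is not the critical scaling. With these choices one gets $\int_{B_R}\abs{DQ_j}\leq\kappa_j\sqrt{m_j}R^s\to0$. Your split after rescaling is unnecessary, since $\mu(\overline B_1)\geq1$ follows from upper semicontinuity on compact sets. The claim that $\mu$ has no absolutely continuous part is false, though you never use it.

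Finally, a tangent measure at $x_1$ gains the zero direction but not the radial kernel. To iterate, realize it by a diagonal sequence at doubling radii of a $\mu$-typical point of $\{\abs{x}>1\}\setminus V$; this set has positive $\mu$-mass by translation invariance along $V$ and $\dim V<s$. Then place that sequence in $\mathcal C(V\oplus\R e)$ and rerun the normalization above.
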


\begin{rem}
We make several remarks on the above theorem.
\begin{enumerate} 
\item No curl condition is imposed. In the Liouville application, $q_j$ is a scalar gradient, but the proof uses only the stated hypotheses. The tensor error $R_j$ may be any $L^1$ field whose norm tends to zero.
\item The limiting identity is the quadratic case of the stationary matrix-measure theory studied by Moser \cite{Moser}. The measure identities needed here are proved in Section~\ref{sec:preliminaries}. In dimension three, the exact vector stress also occurs in the weak Beltrami-flow formulation of Chae and Wolf \cite{ChaeWolf}, whose radial identity gives local critical Morrey control. Limiting measures and their invariances are also used in Lin's blow-up analysis of stationary harmonic maps \cite{Lin}.
\item This compactness theorem implies  a local energy estimate, see  Corollary~\ref{cor:coercivity}  below. In particular, the uniform outer $L^2$ bound can be omitted from the sequential conclusion, provided each $q_j$ belongs to $L^2(B_2)$.
\item The proof of Theorem~\ref{thm:main} rules out persistent energy by induction on the number of constant directions in which the component energy vanishes. This is a kind of dimension reduction procedure similar to the classical Federer dimension reduction principle. The induction ends with a two-dimensional slicing argument.
\end{enumerate}
\end{rem}

\subsection{Counterexample classes and critical normalization}

Recall that $s=n-2$. For a linear subspace $V\subset\R^n$, call a sequence a counterexample in $\mathcal C(V)$ if it satisfies the hypotheses of Theorem~\ref{thm:main} and, for fixed $c>0$ and $\Lambda<+\infty$,
\begin{equation}\label{eq:counter}
 \int_{B_1}\abs{q_j}^2\dd x\geq c,
 \quad \int_{B_2}\abs{q_j}^2\dd x\leq\Lambda,
 \quad\int_{B_2}\abs{\operatorname{proj}_Vq_j}^2\dd x\to0.
\end{equation}
The constants $c,\Lambda$ may depend on the sequence. Theorem \ref{thm:abstractcompact} is equivalent to the claim that $\mathcal C(\{0\})$ is empty.
We reduce this claim to the one $\mathcal{C}(V)=\emptyset$ for some subspace $V$ of dimension $n-2$ in this and the next subsections,  by using a dimension reduction argument.

\begin{lem}\label{lem:critical}
If $\mathcal C(V)$ is non-empty, there are fields $Q_j,S_j$ on balls exhausting $\R^n$ with
\[
\begin{aligned}
 &\op{div}(\cT(Q_j)-S_j)=0,\\
 &\norm{Q_j}_{L^1(K)}+\norm{DQ_j}_{L^1(K)}+\norm{S_j}_{L^1(K)}\to0,\\
 &\int_K\abs{\operatorname{proj}_VQ_j}^2\dd x\to0
 \quad(K\subset\subset\R^n),
\end{aligned}
\]
and a non-zero pair of Radon measures $A,\mu$ on $\R^n$ such that
\begin{align}
 Q_j\otimes Q_j\dd x&\stackrel{*}{\rightharpoonup}A,
 &\abs{Q_j}^2\dd x&\stackrel{*}{\rightharpoonup}\mu,\label{eq:covconv}\\
 A&\geq0,
 &\tr A&=\mu,\quad Av=0\quad(v\in V),\label{eq:covprop}\\
 \op{div}\left(A-\frac{1}{2}\mu I\right)&=0,
 &\mu(B_R)&\leq R^s\ (R\geq1),\quad \mu(\overline B_1)\geq1.
 \label{eq:globalgrowth}
\end{align}
Here $A\geq0$ means positive semidefinite as a matrix-valued measure.

The normalization also gives
\[
\begin{aligned}
 &\mu(\overline B_1)=1,\quad \mu(B_R)=R^s\quad(R>1),\\
 &A(x)x=0\quad\text{for }\mu\text{-almost every }x\in\R^n\backslash\overline B_1.
\end{aligned}
\]
\end{lem}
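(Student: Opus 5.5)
We blow up at a point and scale where the normalized energy first becomes critical, then pass to weak limits. Start from a counterexample $(q_j,R_j)\in\mathcal C(V)$. Let $\mu_j=\abs{q_j}^2\dd x$. The scale-invariant quantity is
\[
\theta_j(x,r)=r^{-s}\int_{B_r(x)}\abs{q_j}^2\dd y .
\]
The goal is to rescale so that this density never exceeds $1$ at large scales and equals about $1$ at the unit scale.

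\textbf{Step 1: existence of a concentrating scale.} Since $\norm{q_j}_{L^1(B_2)}+\norm{Dq_j}_{L^1(B_2)}\to0$ while $\int_{B_1}\abs{q_j}^2\geq c$, the sup of $\theta_j(x,r)$ over $x\in B_{3/2}$, $r\leq\frac14$ cannot stay bounded. Otherwise the Morrey bound \eqref{eq:morreybesov} gives a uniform $\dot B^{-1}_{\infty,\infty}$ bound on a cutoff $\phi q_j$. Lemma~\ref{lem:robust}, with $b=0$ and $L$ bounded, then yields
\[
\norm{\phi q_j}_2^2\leq C L\norm{D(\phi q_j)}_1\to0,
\]
which contradicts $\int_{B_1}\abs{q_j}^2\ge c$.

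So choose $(x_j,r_j)$, with $r_j\to0$, essentially maximizing the truncated density
\[
\Theta_j=\sup\{\theta_j(x,r):x\in B_{3/2},\ r_j\le r\le\tfrac14\}.
\]
Do this by a "first scale" selection: take the largest $r_j$ at which $\theta_j(x,r)\geq 1$ (after multiplying by a fixed normalizing constant) somewhere, so that $\theta_j(y,r)\le1$ for all $r\ge r_j$ near $x_j$. Then set
\[
Q_j(y)=r_j^{s/2+1}\,q_j(x_j+r_jy),\qquad S_j(y)=r_j^{n}R_j(x_j+r_jy).
\]
The exponent is chosen so that $\int_{B_R}\abs{Q_j}^2=r_j^{-s}\int_{B_{Rr_j}(x_j)}\abs{q_j}^2$.

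\textbf{Step 2: the rescaled errors vanish.} Check the scalings of each term.
\begin{itemize}
\item For $DQ_j$, the $L^1$ norm on $B_R$ is $r_j^{s/2+2-n}\int\abs{Dq_j}=r_j^{-s/2}\int\abs{Dq_j}$. This does not obviously tend to $0$, which is the main obstacle.
\item For $S_j$, the stress scales as $\abs{q}^2$, so the correct scaling is $S_j(y)=r_j^{2}R_j(x_j+r_jy)$ multiplied by $r_j^{s}$. This gives $\norm{S_j}_{L^1(B_R)}=r_j^{s+2-n}\norm{R_j}_{L^1}=\norm{R_j}_{L^1}\to0$. The same check is needed for $Q_j$ and $\norm{Q_j}_{L^1}$.
\end{itemize}
To handle the factor $r_j^{-s/2}$ in front of the $Dq_j$ norm, use a maximal-function choice. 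Consider the measure $\nu_j=(\abs{q_j}+\abs{Dq_j}+\abs{R_j})\dd x$ together with its normalized maximal quantity
\[
M_j(x)=\sup_r r^{1-n}\nu_j(B_r(x)).
\]
By the weak-$(1,1)$ covering estimate, this is small outside a set of small $\mu_j$-mass. Choose $x_j$ among points where $\mu_j$ concentrates but $M_j$ is small; this is the "maximal estimate" mentioned in the introduction.

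The remaining vanishing errors are then pointwise in the center choice, and Lemma~\ref{lem:robust} is applied again at large scales to rule out loss of mass. I expect this balancing of the $\abs{Dq_j}$ scaling against the density normalization to be the main difficulty.

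\textbf{Step 3: pass to the limit and use monotonicity.} Extract weak-* limits $A,\mu$ as in \eqref{eq:covconv}. These inherit several properties directly:
\begin{itemize}
\item $A\ge0$ and $\tr A=\mu$;
\item $Av=0$ for $v\in V$, from $\int\abs{\operatorname{proj}_VQ_j}^2\to0$;
\item $\op{div}(A-\frac12\mu I)=0$. The nonlinear term passes to the limit as measures, and $S_j\to0$ in $L^1$.
\end{itemize}
The growth bound $\mu(B_R)\le R^s$ comes from the scale selection. The bound $\mu(\overline B_1)\ge1$ is non-loss of mass at the unit scale; compactness of $Q_j$ in $L^2_{\loc}$ away from concentration is not needed, since upper semicontinuity on closed balls suffices.

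For the refined normalization, apply Lemma~\ref{lem:monotone}. The density $R^{-s}\mu(B_R)$ is non-decreasing, is at most $1$ for $R\ge1$, and is at least $1$ at $R=1$ (closed ball). Hence it is identically $1$ on $(1,\infty)$, giving $\mu(\overline B_1)=1$ and $\mu(B_R)=R^s$. Then \eqref{eq:monotone} forces
\[
\int\abs{x}^{-s}\,\hat x^TA\hat x=0 \quad\text{outside } \overline B_1 .
\]
Since $P\ge0$, this gives $P(x)x=0$ $\mu$-a.e. there, that is, $A(x)x=0$.
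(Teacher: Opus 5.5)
Your Step 3 is correct and is the paper's own monotonicity argument. Steps 1--2, however, have genuine gaps, and these are the steps that must produce $(Q_j,S_j)$.

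\textbf{The rescaling.} The rescaling is wrong. With $Q_j(y)=r_j^{s/2+1}q_j(x_j+r_jy)=r_j^{n/2}q_j(x_j+r_jy)$ one gets $\int_{B_R}\abs{Q_j}^2=\int_{B_{Rr_j}(x_j)}\abs{q_j}^2$, not $r_j^{-s}\int_{B_{Rr_j}(x_j)}\abs{q_j}^2$. At a scale where the density is about one, this equals $r_j^s\to0$, so the limit is trivial. This is also why your $S_j$ error looked automatic; the $V$-component error is never addressed at all.

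The correct blow-up is $Q_j=r_jm_j^{-1/2}q_j(x_j+r_j\,\cdot)$ and $S_j=r_j^2m_j^{-1}R_j(x_j+r_j\,\cdot)$, where $m_j$ is the threshold level. With this scaling every error carries a factor $r_j^{-s}$. For example, $\int_{B_R}\abs{S_j}=r_j^{-s}m_j^{-1}\int_{B_{Rr_j}(x_j)}\abs{R_j}$, and likewise for $DQ_j$ and $\operatorname{proj}_VQ_j$. None of these is small without control at the center.

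The level must also tend to infinity. With a fixed level, the outermost radius need not tend to zero: the level may be reached at a fixed scale with further concentration inside. Then the rescaled balls need not exhaust $\R^n$. The paper forces $r_j^s\le\Lambda/m_j\to0$.

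\textbf{The center selection.} Your center selection cannot supply that control. The covering estimate for $M_j(x)=\sup_r r^{1-n}\nu_j(B_r(x))$ bounds the Lebesgue measure, or $\mathcal H^{n-1}$-content, of $\{M_j>\lambda\}$. It does not bound its $\mu_j$-mass. Since $\mu_j$ concentrates precisely on sets of vanishing Lebesgue measure, nothing guarantees a concentration point where $M_j$ is small.

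The missing idea is a condition relative to $\mu_j$. Set $\beta_j=(\abs{Dq_j}+\abs{R_j}+\abs{\operatorname{proj}_Vq_j}^2)\dd x$; the $V$-component must be included. Call $x$ good if $\beta_j(B_r(x))\le\kappa_j\mu_j(B_r(x))$ for all $r\le\rho_0$. Besicovitch then gives $\mu_j(B_{3/2}\backslash G_j)\le C_n\kappa_j^{-1}\beta_j(B_2)$, with no doubling hypothesis.

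Your Step 1, which uses Lemma~\ref{lem:robust} with $b=0$, only shows that the density blows up \emph{somewhere}, possibly only at bad centers. You need Lemma~\ref{lem:robust} with $a_j=\psi q_j\mathbf 1_{G_j}$ and the $L^2$ error $b_j=\psi q_j\mathbf 1_{G_j^c}$. That produces a good center whose density exceeds $m_j$ at some scale, and it requires $\sqrt{m_j}\eta_j\to0$.

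Now take the outermost radius of level $m_j$ at that same center. The rescaled errors are then bounded by $\kappa_j\sqrt{m_j}R^s$ for $DQ_j$ and by $\kappa_jR^s$ for $S_j$ and $\operatorname{proj}_VQ_j$. So one also needs $\kappa_j\sqrt{m_j}\to0$. The choice $\kappa_j=\eta_j^{1/4}$, $m_j=\eta_j^{-1/4}$ satisfies both requirements.

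Finally, $\norm{Q_j}_{L^1(K)}\to0$ is not a scaling check. $W^{1,1}_{\rm loc}$ compactness gives a constant limit $a$, and $\abs{a}^2\abs{B_R}\le R^s$ for all $R\ge1$ forces $a=0$.
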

\begin{proof}
Take a sequence in $\mathcal C(V)$ and write
\[
 \mu_j=\abs{q_j}^2\dd x,\quad
 \beta_j=(\abs{Dq_j}+\abs{R_j}+\abs{\operatorname{proj}_Vq_j}^2)\dd x,
\]
\[
 \eta_j=\norm{q_j}_{L^1(B_2)}+\beta_j(B_2)+\frac{1}{j},
 \quad\kappa_j=\eta_j^{\frac{1}{4}},\quad m_j=\eta_j^{-\frac{1}{4}},
 \quad\rho_0=\frac{1}{8}.
\]
Define the set of good centers
\[
 G_j=\{x\in B_{\frac{3}{2}}:\beta_j(B_r(x))\leq\kappa_j\mu_j(B_r(x))
 \text{ for all }0<r\leq\rho_0\}.
\]
The centered maximal-ratio weak estimate yields
\begin{equation}\label{eq:badmass}
 \mu_j\left(B_{\frac{3}{2}}\backslash G_j\right)
 \leq C_n\kappa_j^{-1}\beta_j(B_2)
 \leq C_n\eta_j^{\frac{3}{4}}.
\end{equation}
This estimate uses no doubling hypothesis on $\mu_j$: apply the Besicovitch covering theorem to balls centered in the bad set on which $\beta_j(B)>\kappa_j\mu_j(B)$, select a covering of bounded multiplicity, and sum $\mu_j(B)<\kappa_j^{-1}\beta_j(B)$. For the centered covering theorem used here, see \cite[Theorem~3.1]{GrafakosKinnunen}. All these balls lie in $B_2$. The measures here have $L^1$ densities, so continuity in the radius also makes $G_j$ a Borel set by restricting first to rational radii.

{\bf Claim.}  For every sufficiently large $j$, there are $x_j\in G_j$ and a radius $\widehat{r}_j$ such that
\begin{align}\label{eqn:existence of bad radius}
    \widehat{r}_j^{-s}\mu_j(B_{\widehat{r}_j}(x_j))> m_j. 
\end{align}

Assume to the contraty that, along a subsequence, the bound
\begin{equation}\label{eq:assumelow}
 r^{-s}\mu_j(B_r(x))\leq m_j
 \quad\text{holds for every }x\in G_j\text{ and }0<r\leq\rho_0.
\end{equation}

Choose $\psi\in C_c^\infty(B_{\frac{3}{2}})$, $0\leq\psi\leq1$, equal to one on $B_1$, and extend $\psi q_j$ by zero. Split
\[
 \psi q_j=a_j+b_j,
 \quad a_j=\psi q_j\mathbf1_{G_j},\quad
 b_j=\psi q_j\mathbf1_{G_j^c}.
\]
For a ball $B_r(z)$ on which $\int_{B_r(z)}\abs{a_j}^2>0$, choose $x\in G_j\cap B_r(z)$. Balls with zero integral require no estimate. For $2r\leq\rho_0$, \eqref{eq:assumelow} yields
\[
 \int_{B_r(z)}\abs{a_j}^2\dd x\leq\mu_j(B_{2r}(x))\leq m_j(2r)^s.
\]
For larger radii the uniform energy bound $\Lambda$ yields the same estimate with $C_nm_j$, for all large $j$. Thus \eqref{eq:morreybesov} yields $\norm{a_j}_{\dot B^{-1}_{\infty,\infty}}\leq C_n\sqrt{m_j}$. Also
\[
 \norm{b_j}_2^2\leq C_n\eta_j^{\frac{3}{4}},
 \quad\norm{D(\psi q_j)}_1\leq C_n\eta_j.
\]
Lemma~\ref{lem:robust} would imply
\[
 c\leq\norm{\psi q_j}_2^2
 \leq C_n\sqrt{m_j}\eta_j+C_n\eta_j^{\frac{3}{4}}\to0.
\]
This is a contradiction, and the claim follows.

The density at $\rho_0$ is at most $\rho_0^{-s}\Lambda<m_j$ for all large $j$. By the above claim, we choose the outermost radius $r_j\in(\widehat{r}_j,\rho_0)$ of the level $m_j$. Continuity yields
\begin{equation}\label{eq:outermost}
 \mu_j(B_{r_j}(x_j))=m_jr_j^s,
 \quad \mu_j(B_t(x_j))\leq m_jt^s\quad(r_j\leq t\leq\rho_0).
\end{equation}
In particular $r_j^s\leq\frac{\Lambda}{m_j}\to0$. On $B_{\frac{\rho_0}{r_j}}$ set
\[
 Q_j(z)=\frac{r_j}{\sqrt{m_j}}q_j(x_j+r_jz),
 \quad S_j(z)=\frac{r_j^2}{m_j}R_j(x_j+r_jz).
\]
For every fixed $R\geq1$, goodness and \eqref{eq:outermost} imply, for large $j$,
\[
\begin{aligned}
 &\int_{B_R}\abs{Q_j}^2\dd x\leq R^s,
 \quad\int_{B_1}\abs{Q_j}^2\dd x=1,\\
 &\int_{B_R}\abs{DQ_j}\dd x\leq\kappa_j\sqrt{m_j}R^s\to0,\\
 &\int_{B_R}\abs{S_j}\dd x+
 \int_{B_R}\abs{\operatorname{proj}_VQ_j}^2\dd x
 \leq2\kappa_jR^s\to0.
\end{aligned}
\]
Local compactness of $W^{1,1}$ in $L^1$ provides a constant vector limit $Q_j\to a$ in $L^1_{\rm loc}$ after subsequence. The energy bound for every $R\geq1$ yields $\abs{a}^2\abs{B_R}\leq R^s$. Sending $R\to+\infty$ forces $a=0$.

Extracting the covariance measures now yields \eqref{eq:covconv}--\eqref{eq:globalgrowth}. Positive semidefiniteness and vanishing $V$-components pass to the limit. Non-zero mass follows by testing cutoffs equal to one on $\overline B_1$, then shrinking their supports to that ball. The stress equation passes to the limit because $S_j\to0$ locally in $L^1$.

For a continuity radius $R>1$, let $r\downarrow1$ through continuity radii in \eqref{eq:monotone}. Together with \eqref{eq:globalgrowth}, this gives
\[
 1\leq\mu(\overline B_1)\leq R^{-s}\mu(B_R)\leq1.
\]
Thus $\mu(\overline B_1)=1$ and $\mu(B_R)=R^s$ at every continuity radius $R>1$. Continuity from below extends the latter identity to all $R>1$. Both endpoint densities in \eqref{eq:monotone} therefore equal one on every annulus with continuity radii $1<r<R$. Its non-negative right-hand side vanishes there. Positive semidefiniteness gives $A(x)x=0$ for $\mu$-almost every $x$ with $\abs{x}>1$, proving the additional assertions.
\end{proof}

\subsection{Choice of a zero direction}

Now we use the radial kernel outside $\overline B_1$ from Lemma~\ref{lem:critical} and the blow-up method to gain a new constant zero direction.

\begin{lem}\label{lem:gain}
If $\mathcal C(V)$ is non-empty and $\dim V<s$, then $\mathcal C(V\oplus\R e)$ is non-empty for some unit vector $e\perp V$.
\end{lem}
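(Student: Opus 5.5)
Starting from a sequence in $\mathcal C(V)$, Lemma~\ref{lem:critical} produces rescaled fields $Q_j,S_j$ and a nonzero limit pair $A=P\mu$ with $Av=0$ for $v\in V$, $\mu(B_R)=R^s$ for $R>1$, and $A(x)x=0$ for $\mu$-a.e.\ $x\notin\overline B_1$. The plan is to pick a point $x_0$ in the support of $\mu$ outside $\overline B_1$, with $x_0\notin V$, and blow up there a second time. In the blow-up the radial direction $e=x_0/\abs{x_0}$, projected orthogonally to $V$, becomes a new zero direction.

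First we locate such a point. The support of $\mu\llcorner(\R^n\backslash\overline B_1)$ cannot be contained in $V$. Indeed, $\mu(B_R\backslash\overline B_1)=R^s-1>0$, while $\mu$ is a stationary covariance measure with $Av=0$ on $V$. Since $\dim V<s$, monotonicity (Lemma~\ref{lem:monotone} applied at centers in $V$) gives lower density bounds of order $r^s$ at points of $\operatorname{supp}\mu$. Such a measure cannot concentrate on a subspace of dimension less than $s$: we would have $\mathcal H^{\dim V}$-finite pieces carrying $r^s$-density, forcing $\mu=0$ there. So we choose $x_0\in\operatorname{supp}\mu$ with $\abs{x_0}>1$, $x_0\notin V$. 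We also choose it to be a $\mu$-Lebesgue point of $P$, so that $P(x)x=0$ and $P(x)v=0$ hold approximately in $\mu$-average on small balls around $x_0$. Let $e$ be the unit vector along the component of $x_0$ orthogonal to $V$. Since $P(x)x=0$ and $P(x)(\operatorname{proj}_V x)=0$, we get $P(x)(x-\operatorname{proj}_Vx)=0$. Continuity of this vector near $x_0$ then gives $\int_{B_r(x_0)}e^TP e\,\dd\mu=o(\mu(B_r(x_0)))$ as $r\to0$.

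Next we transfer this to the sequence. Fix a small $r$ with $\mu(B_r(x_0))\ge \theta r^s$, where $\theta>0$ comes from the density lower bound at $x_0$. By weak-$*$ convergence,
\[
\int_{B_{2r}(x_0)}\abs{\operatorname{proj}_{V\oplus\R e}Q_j}^2\dd x\le \delta(r)\,r^s
\]
for large $j$, with $\delta(r)\to0$. Here the weak convergence $Q_j\otimes Q_j\rightharpoonup A$ is tested against the continuous integrand $e^T(\cdot)e$, using continuity-radius arguments. Rescaling $Q_j$ on $B_{2r}(x_0)$ to $B_2$ (multiply by $r/r^{s/2}$ so the $L^2$ energy is normalized), the hypotheses of Theorem~\ref{thm:main} persist, because the $L^1$ norms of $Q_j$, $DQ_j$, $S_j$ still tend to zero. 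We also get $\int_{B_1}\ge\theta'$ and $\int_{B_2}\le C$. A diagonal choice $r=r_k\to0$, $j=j_k\to\infty$ makes the projected energy onto $V\oplus\R e$ tend to zero. The direction $e=e_k$ may depend on $k$, so we pass to a subsequence with $e_k\to e$. This yields a sequence in $\mathcal C(V\oplus\R e)$, and $e\perp V$ holds by construction.

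The main obstacle is the first step: finding a point of $\operatorname{supp}\mu\backslash\overline B_1$ off $V$ at which $\mu$ has a positive lower $s$-density. Lemma~\ref{lem:monotone} only gives monotonicity at the origin. To use it at $x_0$, we would apply it to $A$ translated by $x_0$, since the equation is translation invariant. That yields $r^{-s}\mu(B_r(x_0))$ nondecreasing, so the density $\Theta(x_0)=\lim_{r\to0}r^{-s}\mu(B_r(x_0))$ exists. Positivity of $\Theta$ on a set of positive $\mu$-measure follows from the standard upper-density comparison: the set where $\Theta=0$ carries no $\mu$-mass because $\mu\ll\mathcal H^s$-type arguments fail only on $\Theta>0$ sets. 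We would then exclude $V$ using $\mathcal H^s(V\cap B_R)=0$ for $\dim V<s$, since the density upper bound $\Theta\le C$ gives $\mu\llcorner V\le C\mathcal H^s\llcorner V=0$.
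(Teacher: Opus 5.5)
\textbf{Gap: the lower density bound at $x_0$.} The failure is in the step you flag as the main obstacle: the lower bound $\mu(B_r(x_0))\geq\theta r^s$, which you need to get $\int_{B_1}\abs{\widehat q}^2\geq\theta'$ after normalizing by $r^s$.

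Lemma~\ref{lem:monotone}, translated to $x_0$, makes $r^{-s}\mu(B_r(x_0))$ non-decreasing. That is an \emph{upper} bound at small scales. It gives existence of $\Theta(x_0)$, but not $\Theta(x_0)>0$.

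Your claim that $\{\Theta=0\}$ carries no $\mu$-mass is false. The density comparison only yields $\mu(E\cap\{\Theta=0\})\leq 2^st\,\mathcal H^s(E)$ for every $t>0$, which says nothing when $\mathcal H^s(E)=\infty$.

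In fact the properties supplied by Lemma~\ref{lem:critical} cannot give positive density. For $V=\{0\}$ consider
\[
 A=\frac{s}{(n-1)\abs{\mathbb S^{n-1}}}\abs{x}^{-2}\left(I-\frac{x\otimes x}{\abs{x}^2}\right)\dd x,\quad
 \mu=\tr A=\frac{s}{\abs{\mathbb S^{n-1}}}\abs{x}^{-2}\dd x .
\]
This field has the following properties:
\begin{itemize}
\item it is positive semidefinite and satisfies $\op{div}(A-\frac12\mu I)=0$ on all of $\R^n$, since the flux through $\partial B_\epsilon$ is $O(\epsilon^{n-2})$;
\item it has $A(x)x=0$ everywhere, $\mu(\overline B_1)=1$, and $\mu(B_R)=R^s$ for every $R$;
\item it is absolutely continuous, so $\Theta(x_0)=0$ at every $x_0\neq0$.
\end{itemize}
Replacing $\abs{x}$ by $\abs{y}$ on $\R^{n-k}_y\times\R^k_z$ gives analogous examples with $V=\{0\}\times\R^k$, $k\leq n-3$.

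For such a limit, your rescalings normalized by $r^s$ have inner energy tending to zero, so they lie in no class $\mathcal C(\cdot)$. Separately, the amplitude factor should be $r$, not $r^{1-s/2}$, for $\int_{B_1}\abs{\widehat q}^2$ to approximate $r^{-s}\mu(B_r(x_0))$.

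\textbf{The missing idea.} Normalize by the actual mass $M_\ell=\mu(B_{r_\ell}(x_0))$, i.e.\ use the amplitude $r_\ell^{n/2}M_\ell^{-1/2}$. Choose $x_0$ and radii $r_\ell\downarrow0$ at which $\mu$ is doubling, $\mu(B_{2r_\ell}(x_0))\leq C_n\mu(B_{r_\ell}(x_0))$. Such radii exist at $\mu$-almost every point of any Radon measure on $\R^n$, with no density hypothesis. Failure of doubling at all small scales forces $\mu(B_r(x))\leq C_xr^{n+1}$, and that happens only on a $\mu$-null set.

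With this choice, the bound $e^TP(y)e\leq\abs{y-x_0}^2/a^2$ behind your $o(\mu(B_r(x_0)))$ estimate gives three things after normalization on $B_2$:
\begin{itemize}
\item normalized $e$-energy at most $4C_nr_\ell^2/a^2$;
\item energy on $B_1$ between $\frac12$ and $2$;
\item energy on $B_2$ at most $2C_n$.
\end{itemize}
Your transfer and diagonal steps then go through.

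The rest of your proposal is sound. In particular, deducing $\mu(V)=0$ from the uniform upper $s$-density bound and $\mathcal H^s(V)=0$ is a valid alternative to the paper's argument via translation invariance of $\mu$ along $V$.
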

\begin{proof}

Take the pair $(A,\mu)$ from Lemma~\ref{lem:critical}, and put $k=\dim V<s$. For every $v\in V$, stationarity and $Av=0$ imply $\partial_v\mu=0$. Thus $\mu$ is translation invariant along $V$. We claim that
\[
 \mu\left(\{x:\abs{x}>1\}\backslash V\right)>0.
\]
If $V=\{0\}$, this follows from $\mu(B_R\backslash\overline B_1)=R^s-1>0$ for $R>1$. Otherwise, if the claim failed, any bounded Borel subset of $\R^n\backslash V$ could be translated along $V$ into $\{x:\abs{x}>1\}\backslash V$ and would have zero measure. Hence $\mu$ would be supported on $V$ and, by translation invariance, would equal a constant multiple of $\mathcal H^k\llcorner V$. This contradicts $\mu(B_R)=R^s$ for every $R>1$, since $k<s$.

Choose a $\mu$-typical point $x_0\notin\overline B_1\cup V$ and radii $r_\ell\downarrow0$ with $2r_\ell<\abs{x_0}-1$, such that both $r_\ell$ and $2r_\ell$ are continuity radii and

\begin{equation}\label{eq:doubling}
 0<\mu(B_{r_\ell}(x_0)),\quad
 \mu(B_{2r_\ell}(x_0))\leq C_n\mu(B_{r_\ell}(x_0)).
\end{equation}
For completeness, such subsequences exist at almost every point of every locally finite Radon measure: if $\mu(B_{4r}(x))>4^{n+1}\mu(B_r(x))$ at all sufficiently small scales, iteration yields $\mu(B_r(x))\leq C_xr^{n+1}$. The set of such points is $\mu$-null: partition it into bounded subsets on which the constants and admissible radii are uniform, cover each by $O(r^{-n})$ balls centered in that subset, and let the resulting $O(r)$ mass bound tend to zero. After choosing factor-four doubling radii, perturb each within $[r,2r]$ to avoid the countably many non-continuity spheres. This preserves a fixed factor-two doubling constant.

Set
\[
 a=\abs{\operatorname{proj}_{V^\perp}x_0}>0,
 \quad e=\frac{\operatorname{proj}_{V^\perp}x_0}{a}.
\]
By Lemma~\ref{lem:critical} and $2r_\ell<\abs{x_0}-1$, we have $A(y)y=0$ on $B_{2r_\ell}(x_0)$. Since $A$ also annihilates $V$, positive semidefiniteness yields, on this ball,
\begin{equation}\label{eq:newdir}
 e^TAe\leq\frac{\abs{y-x_0}^2}{a^2}\dd \mu(y)
 \leq\frac{4r_\ell^2}{a^2}\dd \mu(y).
\end{equation}

Let $(Q_j,S_j)$ realize $(A,\mu)$ as in Lemma~\ref{lem:critical}, and write $M_\ell=\mu(B_{r_\ell}(x_0))$. For each fixed $\ell$, choose $j=j(\ell)$ sufficiently large and define on $B_2$
\[
 \widehat q_\ell(z)=\frac{r_\ell^{\frac{n}{2}}}{\sqrt{M_\ell}}
 Q_{j(\ell)}(x_0+r_\ell z),\quad
 \widehat R_\ell(z)=\frac{r_\ell^n}{M_\ell}
 S_{j(\ell)}(x_0+r_\ell z).
\]
The transformed stress equation is exact. By continuity radii, choose the indices so that the energy on $B_1$ lies between $\frac{1}{2}$ and $2$, and the energy on $B_2$ is at most $2C_n$. Choose them still later, if necessary, so that the $L^1$ norms of $\widehat q_\ell,D\widehat q_\ell,\widehat R_\ell$ and the squared $L^2$ norm of the old $V$-components are below $\frac{1}{\ell}$. This is possible because all scaling factors are finite and fixed before the index is selected. Finally, \eqref{eq:doubling}--\eqref{eq:newdir} and covariance convergence yield
\[
 \int_{B_2}\abs{\widehat q_\ell\cdot e}^2\dd x
 \leq\frac{4C_nr_\ell^2}{a^2}+\frac{1}{\ell}\to0.
\]
The resulting sequence belongs to $\mathcal C(V\oplus\R e)$.
\end{proof}

\subsection{The terminal two-dimensional slicing argument}

The induction procedure in the previous lemma terminates when only two transverse field components remain. This terminal case can be handled by the planar Sobolev inequality, which will lead to a contradiction with the persistence of the energy.

\begin{lem}\label{lem:constancy}
On a ball in $\R^d$, suppose $F_j$ is bounded in $L^1_{\rm loc}$, $F_j\to\theta$ in distributions for a constant $\theta$, and
\[
 \nabla F_j=\op{div} E_j+b_j,
 \quad E_j\to0\text{ in }L^1_{\rm loc},\quad
 b_j\to0\text{ in }L^1_{\rm loc}.
\]
Then $F_j\to\theta$ in measure on smaller balls.
\end{lem}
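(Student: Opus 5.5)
The idea is to write $F_j-\theta$ as a localized Newtonian potential of $\nabla F_j$, and then estimate the two source terms with the weak-type bounds available at the $L^1$ endpoint.

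Fix concentric balls $B'\subset\subset B''\subset\subset B$ and a cutoff $\chi\in C_c^\infty(B'')$ with $\chi=1$ on $B'$. Set $w_j=\chi(F_j-\theta)$. Then
\[
 \nabla w_j=\chi\op{div}E_j+\chi b_j+(F_j-\theta)\nabla\chi
 =\op{div}(\chi E_j)-E_j\nabla\chi+\chi b_j+(F_j-\theta)\nabla\chi .
\]
Since $w_j$ has compact support, the representation formula gives
\[
 w_j=\Gamma*\op{div}(\nabla w_j)=\sum_{k}\partial_k\Gamma*(\partial_k w_j),
\]
where $\Gamma$ is the fundamental solution. So $w_j$ is a sum of three kinds of terms.

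\textbf{The $E_j$ term.} The term $\partial_k\partial_l\Gamma*(\chi E_j)$ is a Calder\'on--Zygmund operator applied to an $L^1$ field. It is bounded by $C\norm{E_j}_{L^1(B'')}$ in $L^{1,\infty}$, and so it tends to zero in measure.

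\textbf{The $b_j$ and $E_j\nabla\chi$ terms.} The terms $\partial_k\Gamma*(\chi b_j-E_j\nabla\chi)$ are Riesz potentials of order one of $L^1$ functions. They are bounded in $L^{d/(d-1),\infty}$ by the $L^1$ norms, so they also tend to zero in measure.

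\textbf{The remaining term.} This is $\partial_k\Gamma*((F_j-\theta)\partial_k\chi)$, and it is the main obstacle. We only know that $F_j-\theta$ is bounded in $L^1_{\rm loc}$ and tends to zero in distributions; it has no strong convergence. The saving feature is that $\nabla\chi$ vanishes on $B'$. For $x\in B_{r}$ with $B_r\subset\subset B'$, the kernel $y\mapsto\partial_k\Gamma(x-y)\partial_k\chi(y)$ is smooth in $y$ on the support of $\nabla\chi$, uniformly in $x$. Modify it outside a neighbourhood of $\supp\nabla\chi$ to obtain a compactly supported test function $\phi_x$ in $B$. The family $\{\phi_x\}_{x\in \overline B_r}$ is compact in $C^\infty_c$. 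Distributional convergence plus the uniform $L^1$ bound then give
\[
 \langle F_j-\theta,\phi_x\rangle\to0\quad\text{uniformly in }x\in\overline B_r .
\]
To get the uniform statement, I would argue as follows. The quantities are equicontinuous in $x$, since their derivatives are bounded by $\norm{F_j-\theta}_{L^1}\sup\abs{\partial_x\phi_x}$. Pointwise convergence combined with equicontinuity on a compact set gives uniform convergence. Hence this term tends to zero uniformly on $B_r$.

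\textbf{Conclusion.} On $B_r$ we have $w_j=F_j-\theta$. Combining the three estimates, $F_j\to\theta$ in measure on $B_r$. Any smaller ball can be handled by choosing $B'$ appropriately, so the conclusion holds on every smaller ball.

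If $F_j$ is vector- or matrix-valued, the same argument applies componentwise.
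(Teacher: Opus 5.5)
Your proof is correct, but it localizes differently from the paper.

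\textbf{The paper's route.} It first mollifies $F_j,E_j,b_j$ at an index-dependent scale and cuts off only the sources. It then solves $\Delta H_j=\sum\partial_i\partial_k(E_j)_{ik}+\sum\partial_i(b_j)_i$ on $\R^d$. It shows $H_j\to0$ in measure, using weak $(1,1)$ for the second-order part and local $L^1$ for the first-order part. It also needs $H_j\to0$ in distributions, which requires a separate duality step. Finally, $\widetilde F_j-H_j$ is harmonic on an intermediate ball and tends to $\theta$ in distributions, hence smoothly on smaller balls by the mean-value reproduction property.

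\textbf{Your route.} You cut off $F_j-\theta$ itself and use the representation formula. The paper's harmonic remainder becomes the explicit commutator $\partial_k\Gamma*((F_j-\theta)\partial_k\chi)$. On the inner ball its kernel runs over a compact family of test functions. The uniform $L^1_{\rm loc}$ bound supplies the equicontinuity that upgrades pointwise distributional convergence to uniform convergence. This removes the duality check and the harmonic-function step, and it shows exactly where the $L^1_{\rm loc}$ bound is used. The analytic inputs are the same: weak $(1,1)$ for $\partial_k\partial_l\Delta^{-1}$ and local integrability of $\nabla\Gamma$.

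\textbf{One point you should add.} For a merely $L^1$ field $f$, the distribution $\partial_k\partial_l\Gamma*f$ need not be a locally integrable function. So it is not automatic that your zero-order term equals the weak-type extension $Tf$, which is the object the $L^{1,\infty}$ bound controls. In your setting it does equal $Tf$, and the paper's device of mollifying first shows why.

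\begin{enumerate}
\item Mollify $w_j$ and the whole identity at scale $\epsilon$, where the representation formula is classical.
\item As $\epsilon\to0$, the Riesz-potential and commutator terms converge in $L^1_{\rm loc}$, and $w_j*\rho_\epsilon\to w_j$ in $L^1$. Hence $T(f*\rho_\epsilon)$ converges in $L^1_{\rm loc}$.
\item $T(f*\rho_\epsilon)$ also converges to $Tf$ in measure, so the two limits agree almost everywhere.
\end{enumerate}

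With this, your decomposition is an identity of functions and the three convergence-in-measure estimates combine as you claim.
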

\begin{proof}
First mollify in the $d$ variables at an index-dependent scale, so that the local $L^1$ difference from $F_j$ tends to zero. This preserves the identity on slightly smaller balls and does not increase the source $L^1$ norms. Multiply the mollified $E_j,b_j$ by a fixed cutoff equal to one on an intermediate ball and solve on $\R^d$
\[
 \Delta H_j=\sum_{i,k}\partial_i\partial_k(E_j)_{ik}
 +\sum_i\partial_i(b_j)_i.
\]
For $d\geq2$, each operator $\partial_i\partial_k\Delta^{-1}$ is bounded on $L^2$ by its Fourier multiplier $\frac{\xi_i\xi_k}{\abs{\xi}^2}$. Away from the diagonal, its kernel and first derivatives are bounded by $C_d\abs{x-y}^{-d}$ and $C_d\abs{x-y}^{-d-1}$, respectively. It is therefore a Calder\'on--Zygmund operator in the sense of \cite[Definitions~2.1 and~2.4]{TaoNotes}, and the weak type $(1,1)$ estimate in \cite[Corollary~2.9]{TaoNotes} makes the zero-order part tend to zero in measure. Its distributional convergence follows separately by duality: the adjoint applied to a smooth test function is bounded on the common compact support of the sources.

The first-order part tends to zero locally in $L^1$. Indeed, if $\Gamma_d$ is the fundamental solution of $\Delta$ and $S$ is the common compact source support, then for each compact set $K$,
\[
 \int_K\int_S\abs{\nabla\Gamma_d(x-y)}\,\abs{b_j(y)}\dd y\dd x
 \leq C_{K,S,d}\norm{b_j}_1\to0,
\]
by Fubini and local integrability of $\abs{x}^{1-d}$. Here $E_j,b_j$ denote the mollified, cutoff sources. In dimension $d=1$, the zero-order part is an $L^1$ error and the first-order part is a primitive bounded by $\norm{b_j}_1$.

Thus $H_j\to0$ both in measure locally and in distributions. The harmonic difference between the mollified $F_j$ and $H_j$ converges to $\theta$ in distributions, and hence smoothly on smaller balls: a fixed radial mollifier reproduces harmonic functions there, and derivatives can be transferred to that mollifier. Returning to $F_j$ using the small mollification error, we complete the proof.
\end{proof}

\begin{rem}\label{rem:strong-constancy}
For non-negative $F_j$, the conclusion of Lemma~\ref{lem:constancy} improves to strong convergence in $L^1_{\rm loc}$, as in Allard's strong constancy lemma \cite{Allard86}. Indeed, $\theta\geq0$, and for every non-negative smooth cutoff $\chi$ supported in the ball,
\[
 \int\chi\abs{F_j-\theta}\dd x
 =\int\chi(F_j-\theta)\dd x
 +2\int\chi(\theta-F_j)_+\dd x\to0.
\]
The first term tends to zero by distributional convergence. The second does so by convergence in measure and the bound $0\leq(\theta-F_j)_+\leq\theta$. This stronger conclusion applies to the non-negative slice energies in Lemma~\ref{lem:terminal}.
\end{rem}

\begin{lem}\label{lem:terminal}
If $\dim V=n-2$, then $\mathcal C(V)$ is empty.
\end{lem}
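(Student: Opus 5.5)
We argue by contradiction. Suppose $\mathcal C(V)$ is non-empty with $\dim V=n-2$, and take the fields $Q_j,S_j$ and the limit pair $(A,\mu)$ given by Lemma~\ref{lem:critical}. Write $W=V^\perp$, which is two-dimensional, and use coordinates $x=(y,z)$ with $y\in V\cong\R^{n-2}$ and $z\in W\cong\R^2$. As in the proof of Lemma~\ref{lem:gain}, stationarity together with $Av=0$ for $v\in V$ gives that $\mu$ is invariant under translations along $V$. Since $\mu\neq0$, we can fix a non-negative $\varphi\in C^\infty_c(W)$ and a ball $B'\subset V$ such that $\int\mathbf 1_{B'}(y)\varphi(z)^2\dd\mu>0$.

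Define the slice energies
\[
G_j(y)=\int_W\varphi(z)^2\abs{Q_j(y,z)}^2\dd z .
\]
These functions are non-negative and bounded in $L^1_{\rm loc}(V)$. For $\rho\in C_c^\infty(V)$ we have $\int\rho G_j\dd y\to\int\rho(y)\varphi(z)^2\dd\mu$, by the covariance convergence \eqref{eq:covconv}. By translation invariance along $V$, this limit equals $\theta\int\rho\dd y$ for a constant $\theta\geq0$, and the choice of $\varphi$ gives $\theta>0$.

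Next we derive a constancy equation for $G_j$ in order to apply Lemma~\ref{lem:constancy} with $d=n-2$. For an index $i$ in a $V$-direction, the $i$-th row of $\op{div}(\cT(Q_j)-S_j)=0$ reads
\[
\tfrac12\partial_{y_i}\abs{Q_j}^2=\sum_k\partial_k\bigl((Q_j)_i(Q_j)_k\bigr)-\sum_k\partial_k(S_j)_{ik}.
\]
The product $(Q_j)_i(Q_j)_k$ tends to zero in $L^1_{\rm loc}$ by Cauchy--Schwarz, because $\operatorname{proj}_VQ_j\to0$ in $L^2_{\rm loc}$ while $Q_j$ stays bounded in $L^2_{\rm loc}$; also $S_j\to0$ in $L^1_{\rm loc}$. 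Test against $\varphi(z)^2$ and integrate in $z$. The $z$-derivatives then fall on $\varphi^2$ and produce a term $b_j$ with $b_j\to0$ in $L^1_{\rm loc}(V)$. The $y$-derivatives give $\op{div}_yE_j$ with $E_j\to0$ in $L^1_{\rm loc}(V)$. Lemma~\ref{lem:constancy}, strengthened by Remark~\ref{rem:strong-constancy} since $G_j\geq0$, then yields $G_j\to\theta$ in $L^1(B')$.

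The contradiction comes from the planar Sobolev inequality $\norm{f}_{L^2(\R^2)}\leq C\norm{Df}_{L^1(\R^2)}$, applied on each slice to $f=\varphi Q_j(y,\cdot)$. It gives
\[
G_j(y)\leq C h_j(y)^2,\qquad h_j(y)=\int_W\bigl(\varphi\abs{DQ_j}+\abs{D\varphi}\abs{Q_j}\bigr)(y,z)\dd z .
\]
By Fubini, $\norm{h_j}_{L^1(B')}\leq C\bigl(\norm{DQ_j}_{L^1(K)}+\norm{Q_j}_{L^1(K)}\bigr)\to0$ for a fixed compact $K$. Hence $h_j\to0$ in measure on $B'$, and so does $G_j$. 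This is incompatible with $G_j\to\theta>0$ in $L^1(B')$, so $\mathcal C(V)$ is empty.

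I expect the main obstacle to be the second step: setting up the $V$-row identity so that every error term really is $\op{div}_y$ of an $L^1$-small field plus an $L^1$-small remainder after slicing. This requires care with the distributional meaning of $z$-integration and with the mollification inside Lemma~\ref{lem:constancy}. It is also essential that the limiting constant is identified as the $\mu$-average and is strictly positive. The point of the whole argument is that the $L^1$ smallness of $h_j$ alone would not control $\int G_j$, because $G_j$ is bounded by the square of $h_j$. Strong constancy supplies the missing uniformity in $y$.
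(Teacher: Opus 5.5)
Your argument is correct and is essentially the paper's proof. Both use slice energies with a transverse cutoff, the $V$-rows of the stress identity giving $\nabla F_j=\op{div}E_j+b_j$ with $L^1$-small errors, Lemma~\ref{lem:constancy}, and the planar Sobolev inequality forcing $F_j\to0$ in measure. The one difference is that you pass through the renormalized fields of Lemma~\ref{lem:critical} and use translation invariance of $\mu$ along $V$ to get $\theta>0$. The paper instead works directly with the original $q_j$ on the cylinder $B^{n-2}_{5/4}\times B^2_{5/4}\subset\subset B_2$ and gets $\theta>0$ from $\int_{B_1}\abs{q_j}^2\geq c$. That detour is harmless but unnecessary, and so is the upgrade to $L^1$ convergence via Remark~\ref{rem:strong-constancy}, since convergence in measure already yields the contradiction.
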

\begin{proof}
Rotate coordinates so that $\R^n=\R^{n-2}_z\times\R^2_t$ and $V$ is the $z$-space. The cylinder $B^{n-2}_{\frac{5}{4}}\times B^2_{\frac{5}{4}}$ is compactly contained in $B_2$ and contains $B_1$. Choose $\eta\in C_c^\infty(B^2_{\frac{5}{4}})$ with $0\leq\eta\leq1$, equal to one on $B^2_1$, and set
\[
 F_j(z)=\int\eta(t)^2\abs{q_j(z,t)}^2\dd t.
\]
These non-negative functions are locally bounded in $L^1_z$. Passing to a subsequence produces a weak measure limit. For tangent indices $i,k\leq n-2$, integrate the $i$th stress equation in the $t$ variables to obtain
\[
 \partial_iF_j=\sum_k\partial_k(E_j)_{ik}+(b_j)_i,
 \quad (E_j)_{ik}=2\int\eta^2\left((q_j)_i(q_j)_k-(R_j)_{ik}\right)\dd t.
\]
The components of the remaining vector are explicitly
\[
 (b_j)_i=-2\sum_{a=1}^2\int
 \partial_{t_a}(\eta^2)\left((q_j)_i(q_j)_{t_a}-(R_j)_{i,t_a}\right)\dd t.
\]
Vanishing $V$-component energy and Cauchy--Schwarz yield
\[
 \norm{E_j}_1\to0,
 \quad\norm{b_j}_1\leq C\norm{\operatorname{proj}_Vq_j}_2\norm{q_j}_2+C\norm{R_j}_1\to0.
\]
The limiting measure $F$ has $\nabla F=0$, hence is $\theta\dd z$ for a constant $\theta\geq0$. This constant is positive: $\int_{B_1^{n-2}}F_j\dd z\geq\int_{B_1}\abs{q_j}^2\dd x\geq c$, and a cutoff equal to one on $\overline B_1^{n-2}$ transfers this positive mass to the limit. Lemma~\ref{lem:constancy} implies $F_j\to\theta$ in measure on interior $z$-balls.

On the other hand, the two-dimensional Sobolev inequality, applied to $\eta q_j(z,\cdot)$ for almost every $z$, yields
\[
 \sqrt{F_j(z)}\leq C\int\left(\eta\abs{D_tq_j}+\abs{D_t\eta}\,\abs{q_j}\right)\dd t.
\]
Its right-hand side tends to zero in $L^1_z$. Therefore $F_j\to0$ in measure, contradicting $\theta>0$.
\end{proof}

\begin{proof}[Proof of Theorem~\ref{thm:main}]
If the conclusion failed, a subsequence would belong to $\mathcal C(\{0\})$. Repeatedly applying Lemma~\ref{lem:gain} produces a non-empty $\mathcal C(V)$ with $\dim V=n-2$. This contradicts Lemma~\ref{lem:terminal}.
\end{proof}

\subsection{A local energy estimate}
The qualitative compactness in Theorem \ref{thm:abstractcompact} can be turned into a coercive estimate, by using
amplitude normalization and an iteration.
\begin{cor}\label{cor:coercivity}
There is $C_n<+\infty$ such that every
\[
 q\in W^{1,1}(B_2;\R^n)\cap L^2(B_2;\R^n),\quad
 R\in L^1(B_2;\R^{n\times n}),\quad \op{div}(\cT(q)-R)=0,
\]
satisfies
\begin{equation}\label{eq:coercivity}
 \int_{B_1}\abs{q}^2\dd x
 \leq C_n\left[
 \left(\norm{q}_{L^1(B_2)}+\norm{Dq}_{L^1(B_2)}\right)^2
 +\norm{R}_{L^1(B_2)}\right].
\end{equation}
Consequently the uniform outer $L^2$ bound in the sequence statement can be omitted, provided every field individually belongs to $L^2(B_2)$.
\end{cor}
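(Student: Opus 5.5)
The plan is to combine the scaling invariance of the stress equation with a contradiction argument based on Theorem~\ref{thm:abstractcompact}. That argument gives an estimate with a small multiple of the outer energy. The outer term is then absorbed by a covering and a standard iteration lemma on nested balls.

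\emph{Homogeneity and dilations.} If $\op{div}(\cT(q)-R)=0$, then for $\lambda>0$ the pair $(\lambda q,\lambda^2R)$ satisfies the same equation. Both sides of \eqref{eq:coercivity} scale by $\lambda^2$, so we may normalize amplitudes freely. For a ball with $B_{2\rho}(y)\subset B_2$, set $q_\rho(z)=\rho^{\frac n2}q(y+\rho z)$ and $R_\rho(z)=\rho^nR(y+\rho z)$ on $B_2$. Then the equation is preserved, since the divergence gains a factor $\rho$ on both terms equally. Moreover, $\int_{B_1}\abs{q_\rho}^2=\int_{B_\rho(y)}\abs{q}^2$ and $\norm{R_\rho}_{L^1(B_2)}=\norm{R}_{L^1(B_{2\rho}(y))}$. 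Finally,
\[
\norm{q_\rho}_{L^1(B_2)}+\norm{Dq_\rho}_{L^1(B_2)}=\rho^{-\frac n2}\Bigl(\norm{q}_{L^1(B_{2\rho}(y))}+\rho\norm{Dq}_{L^1(B_{2\rho}(y))}\Bigr).
\]

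\emph{Step 1: estimate with a small outer term.} For every $\delta>0$ there should be $C_\delta$ with
\[
\int_{B_1}\abs{q}^2\leq\delta\int_{B_2}\abs{q}^2+C_\delta\Bigl[\bigl(\norm{q}_{L^1(B_2)}+\norm{Dq}_{L^1(B_2)}\bigr)^2+\norm{R}_{L^1(B_2)}\Bigr].
\]
If this fails for some $\delta$, there is a sequence $(q_j,R_j)$ violating it with $C_\delta=j$. The right-hand side is then positive, so $\int_{B_2}\abs{q_j}^2>0$, and by homogeneity we may normalize $\int_{B_2}\abs{q_j}^2=1$. Then $\int_{B_1}\abs{q_j}^2>\delta$, while $(\norm{q_j}_1+\norm{Dq_j}_1)^2+\norm{R_j}_1<\frac1j$. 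All hypotheses of Theorem~\ref{thm:abstractcompact} hold, including the uniform $L^2(B_2)$ bound. Hence $q_j\to0$ in $L^2(B_1)$, which is a contradiction.

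\emph{Step 2: absorption.} Fix $1\leq s<t\leq2$ and set $\rho=\frac{t-s}{4}$. Cover $B_s$ by balls $B_\rho(y_i)$ with $y_i\in B_s$ so that the doubled balls $B_{2\rho}(y_i)\subset B_t$ overlap at most $N_n$ times. Apply Step~1 to each rescaled field $q_{\rho,i}$ and sum over $i$. For the squared $L^1$ terms use $\sum_ia_i^2\leq(\sum_ia_i)^2$ together with bounded overlap, and use $\rho\leq1$. With $X=\norm{q}_{L^1(B_2)}+\norm{Dq}_{L^1(B_2)}$ this gives
\[
\int_{B_s}\abs{q}^2\leq N_n\delta\int_{B_t}\abs{q}^2+C_\delta N_n^2\,4^n(t-s)^{-n}X^2+C_\delta N_n\norm{R}_{L^1(B_2)}.
\]
Choose $\delta$ so that $N_n\delta\leq\frac12$. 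The function $\phi(t)=\int_{B_t}\abs{q}^2$ is bounded on $[1,2]$ because $q\in L^2(B_2)$; only finiteness is needed here, not a quantitative bound. The standard iteration lemma (Giaquinta's lemma: if $\phi(s)\leq\frac12\phi(t)+A(t-s)^{-n}+B$ for all $1\leq s<t\leq2$, then $\phi(1)\leq C_n(A+B)$) then yields \eqref{eq:coercivity}.

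\emph{Consequence.} If $q_j\in L^2(B_2)$ and $\norm{q_j}_{L^1}+\norm{Dq_j}_{L^1}+\norm{R_j}_{L^1}\to0$, then the right-hand side of \eqref{eq:coercivity} tends to zero, so $q_j\to0$ in $L^2(B_1)$ without any uniform outer bound.

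I expect the main obstacle to be exactly this removal of the outer $L^2$ norm. Step~1 only controls the inner energy relative to the outer energy, so the covering bookkeeping must be set up so that the squared $L^1$ data, carrying the weight $\rho^{-n}$, is summed consistently across scales.
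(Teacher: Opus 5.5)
Your proposal is correct and follows the paper's proof essentially step for step. You derive the $\delta$-interpolation inequality from Theorem~\ref{thm:abstractcompact} by amplitude normalization, then apply it after rescaling on a bounded-overlap cover of $B_s$ by balls of radius $\frac{t-s}{4}$, using $\sum_i a_i^2\leq(\sum_i a_i)^2$ to reach the hole-filling inequality with coefficient $\frac12$. The only cosmetic difference is that you invoke the standard Giaquinta iteration lemma, whereas the paper iterates explicitly along the radii $r_k=2-\tau^k$ with $\tau=2^{-\frac{1}{2n}}$.
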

\begin{proof}
First, for every $\gamma>0$, Theorem~\ref{thm:main} and amplitude homogeneity imply
\begin{equation}\label{eq:gamma-interpolation}
 \int_{B_1}\abs{q}^2\dd x
 \leq\gamma\int_{B_2}\abs{q}^2\dd x
 +C_{n,\gamma}\left[
 \left(\norm{q}_1+\norm{Dq}_1\right)^2+\norm{R}_1\right],
\end{equation}
where all norms on the right are on $B_2$. Indeed, if this failed for some fixed $0<\gamma<1$, choose successive counterexamples with coefficient $j$ in place of $C_{n,\gamma}$ and divide $q$ by its outer $L^2$ norm and $R$ by the square of that norm. The normalized outer energy is one, the rightmost small quantities tend to zero, and the inner energy exceeds $\gamma$, contrary to Theorem~\ref{thm:main}. For $\gamma\geq1$ the assertion is immediate.

Write $F(r)=\int_{B_r}\abs{q}^2$, $B=\norm{q}_{L^1(B_2)}+\norm{Dq}_{L^1(B_2)}$, and $\mathcal R=\norm{R}_{L^1(B_2)}$. Given $1\leq r<s\leq2$, put $\delta=\frac{s-r}{4}$. Cover $B_r$ by finitely many balls $B_\delta(x_i)$ centered in $B_r$ whose doubled balls lie in $B_s$ and have overlap at most a dimensional constant $N_n$. Applying \eqref{eq:gamma-interpolation} after the change of variables $x=x_i+\delta y$ yields
\[
 \int_{B_\delta(x_i)}\abs{q}^2
 \leq\gamma\int_{B_{2\delta}(x_i)}\abs{q}^2
 +C_{n,\gamma}\left[\delta^{-n}(Q_i+\delta D_i)^2+\mathcal R_i\right],
\]
where $Q_i,D_i,\mathcal R_i$ are the integrals of $\abs{q},\abs{Dq},\abs{R}$ on $B_{2\delta}(x_i)$. Bounded overlap yields
\[
 \sum_i(Q_i+\delta D_i)^2
 \leq\left(\sum_i(Q_i+\delta D_i)\right)^2\leq N_n^2B^2,
 \quad\sum_i\mathcal R_i\leq N_n\mathcal R.
\]
Choose the fixed value $\gamma=(2N_n)^{-1}$ and sum to obtain
\begin{equation}\label{eq:hole-filling}
 F(r)\leq\frac{1}{2} F(s)+C_n(s-r)^{-n}B^2+C_n\mathcal R.
\end{equation}
There is no factor equal to the number of covering balls in this estimate.

Let $\tau=2^{-\frac{1}{2n}}$ and $r_k=2-\tau^k$, so $r_0=1$ and $r_k\uparrow2$. Iterating \eqref{eq:hole-filling} yields
\[
 F(1)\leq2^{-K}F(r_K)
 +C_n(1-\tau)^{-n}B^2\sum_{k=0}^{K-1}(2^{-1}\tau^{-n})^k
 +C_n\mathcal R\sum_{k=0}^{K-1}2^{-k}.
\]
The first term tends to zero because this individual field has finite $F(2)$. The geometric ratio in the second sum is $2^{-\frac{1}{2}}<1$. Sending $K\to+\infty$ proves \eqref{eq:coercivity}.
\end{proof}

\section{Proof of Theorem \ref{thm:weak-epsilon}}\label{sec:liouville}

In this section, we prove Theorem \ref{thm:weak-epsilon}.
The idea is to apply the quadratic-stress estimate in the  previous section to the normalized Green potential and then improve and iterate the excess decay.

\subsection{Normalized harmonic approximation}

We first show that the Dirichlet Green part carries a vanishing fraction of the total energy as that energy tends to zero.

\begin{lem}\label{lem:harmapprox}
Let $u_j\in H^1(B_2)$ be stationary weak solutions of $-\Delta u_j=e^{u_j}$, with $e^{u_j}\in L^1(B_2)$, and set
\[
 E_j=\int_{B_2}(\abs{\nabla u_j}^2+e^{u_j})\dd x\to0.
\]
On $B_2$ write $u_j=h_j+v_j$, where $h_j$ is harmonic with boundary trace $u_j$ and $v_j\in H^1_0(B_2)$. Then
\[
 E_j^{-1}\int_{B_{\frac{1}{2}}}\abs{\nabla v_j}^2\dd x\to0.
\]
\end{lem}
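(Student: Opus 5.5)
The plan is to verify the hypotheses of Theorem~\ref{thm:main}, rescaled from $B_2$ to a smaller ball, for $q_j=\nabla v_j/\sqrt{E_j}$ together with a suitable error $R_j$, exactly as outlined in the introduction. Then $q_j\to0$ in $L^2$ on a smaller ball, and a covering argument (or the scale-invariant form, Corollary~\ref{cor:coercivity}) gives the conclusion on $B_{\frac12}$.

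The first step collects the a priori bounds.
\begin{enumerate}
\item Energy orthogonality gives $\int_{B_2}\abs{\nabla h_j}^2+\int_{B_2}\abs{\nabla v_j}^2=\int_{B_2}\abs{\nabla u_j}^2\leq E_j$, so $\norm{q_j}_{L^2(B_2)}\leq1$.
\item Interior harmonic estimates give $\norm{\nabla h_j}_{L^\infty(B_{3/2})}\leq C_n\sqrt{E_j}$.
\item The maximum principle gives $v_j\geq0$. Hence $e^{h_j}\leq e^{u_j}$, and the mean value property applied to the subharmonic function $e^{h_j}$ gives $e^{h_j}\leq C_nE_j$ on $B_{3/2}$.
\item The Green estimate gives $\norm{\nabla v_j}_{L^1(B_2)}\leq C_n\norm{e^{u_j}}_{L^1}\leq C_nE_j$, so $\norm{q_j}_{L^1}\leq C_n\sqrt{E_j}\to0$.
\end{enumerate}

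The second step is the $L\log L$ bound. Testing the equation with $v_j$ gives $\int e^{u_j}v_j=\int\abs{\nabla v_j}^2\leq E_j$. On $B_{3/2}$, write $g_j=e^{u_j}/E_j=e^{h_j}e^{v_j}/E_j\leq C_ne^{v_j}$, so $\log^+g_j\leq v_j+C_n$. Therefore
\[
\int_{B_{3/2}}g_j\log^+g_j\leq E_j^{-1}\int e^{u_j}v_j+C_n\leq C_n.
\]
The Zygmund-class Calder\'on--Zygmund estimate (a cutoff of $g_j$ in $L\log L$ gives $D^2$ of its Newtonian potential in $L^1_{\rm loc}$), combined with harmonic remainder bounds, then yields $\norm{D^2v_j}_{L^1(B_1)}\leq C_nE_j$. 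Thus $\norm{Dq_j}_{L^1(B_1)}\leq C_n\sqrt{E_j}\to0$.

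The third step builds the error term. Set $a_j=e^{h_j(0)}/E_j\leq C_n$. Since $\abs{h_j-h_j(0)}\leq C_n\sqrt{E_j}$ on $B_1$, we get $e^{h_j}/E_j-a_j\to0$ uniformly there. It remains to handle $\int_{B_1}(e^{h_j}/E_j)(e^{v_j}-1)\leq C_n\int_{B_1}(e^{v_j}-1)$. For this I would use $e^{v}-1\leq ve^{v}$, split into $\{v_j\leq\delta\}$ and $\{v_j>\delta\}$, and control the second set through $\int g_jv_j\leq1$ together with the $L\log L$ bound. The term that is not small a priori is the one where $g_j$ is large but $v_j$ is moderate; there I use uniform integrability of $g_j$ and $\norm{v_j}_{L^1}\leq C_nE_j$. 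This gives $\norm{g_j-a_j}_{L^1(B_1)}\to0$.

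Now expand the stationary identity with $u_j=h_j+v_j$ and divide by $E_j$. The pure harmonic stress $\cT(\nabla h_j)$ is divergence free, and the constant $a_jI$ is divergence free. What remains is
\[
\op{div}(\cT(q_j)-R_j)=0,
\]
where $R_j$ consists of the mixed terms $-E_j^{-1}\left(\nabla h_j\otimes\nabla v_j+\nabla v_j\otimes\nabla h_j-\nabla h_j\cdot\nabla v_jI\right)$ and the term $-(g_j-a_j)I$. The mixed terms have $L^1(B_1)$ norm at most $C_nE_j^{-1}\sqrt{E_j}\,\norm{\nabla v_j}_{L^1}\leq C_n\sqrt{E_j}\to0$.

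Applying Theorem~\ref{thm:main} on $B_1$ after scaling by $2$ (covering $B_{\frac12}$ by small balls if needed) gives $q_j\to0$ in $L^2(B_{\frac12})$, which is the claim.

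I expect the main obstacle to be the $L^1$ Hessian bound together with $g_j-a_j\to0$ in $L^1$, because the endpoint Calder\'on--Zygmund estimate fails. The step that resolves it is the $L\log L$ control derived from $\int e^{u_j}v_j\leq E_j$.
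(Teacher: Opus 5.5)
Your proposal is correct and follows essentially the paper's route: the same Green decomposition and normalization, the $L\log L$ bound from $\int e^{u_j}v_j\le E_j$, the endpoint $L^1$ Hessian estimate, the constant $a_j=e^{h_j(0)}/E_j$, the same error tensor $R_j$, and then the rescaled compactness theorem. The paper only makes two steps quantitative: it splits at $L=\frac12\abs{\log E_j}$ to get $\norm{g_j-a_j}_{L^1}\le C_n/\abs{\log E_j}$, and it uses Corollary~\ref{cor:coercivity} to get the rate $C_n/\abs{\log E_j}$, whereas your uniform-integrability argument combined with Theorem~\ref{thm:main} gives the qualitative convergence, which is all the statement asks.

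One caution: on $\{v_j>\delta\}$, keep the weight and bound $(e^{h_j}/E_j)(e^{v_j}-1)\le g_j$, rather than passing to $C_n\int_{B_1}(e^{v_j}-1)$. The available estimates only control $\int_{B_1}e^{v_j}$ by $C_n/a_j$, and $a_j$ may tend to zero.
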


\begin{proof}
Write $f_j=e^{u_j}$ and $M_j=\int_{B_2}f_j\dd x$. The weak equation yields $-\Delta v_j=f_j$. Positivity of the Dirichlet Green function, or the weak maximum principle, yields $v_j\geq0$. Energy orthogonality yields
\[
 \int_{B_2}\abs{\nabla v_j}^2+\abs{\nabla h_j}^2\dd x
 =\int_{B_2}\abs{\nabla u_j}^2\dd x\leq E_j.
\]
Testing with $\min(v_j,L)$ and letting $L\to+\infty$ yields the exact identity
\begin{equation}\label{eq:fv-energy}
 \int_{B_2}f_jv_j\dd x=\int_{B_2}\abs{\nabla v_j}^2\dd x\leq E_j.
\end{equation}
The bounded truncations are legitimate $H^1_0$ tests. Indeed, there are uniformly bounded smooth compactly supported functions converging to $\min(v_j,L)$ in $H^1_0$ and almost everywhere. Dominated convergence against $f_j\in L^1$ identifies the displayed integral with the $H^{-1}$ pairing furnished by the weak equation.

The Green bounds $G(x,y)\leq C_n\abs{x-y}^{2-n}$ and $\abs{\nabla_xG(x,y)}\leq C_n\abs{x-y}^{1-n}$ yield
\begin{equation}\label{eq:green-l1}
 \norm{v_j}_{L^1(B_2)}+\norm{\nabla v_j}_{L^1(B_2)}
 \leq C_n\norm{f_j}_{L^1(B_2)}\leq C_nE_j.
\end{equation}
Since $e^{h_j}$ is subharmonic and $h_j\leq u_j$, interior mean-value and harmonic estimates yield
\begin{equation}\label{eq:harmonic-bounds}
 e^{h_j}\leq C_nE_j,\quad
 \abs{\nabla h_j}\leq C_n\sqrt{E_j} \quad \text{ on} ~ B_{\frac{3}{2}}.
\end{equation}

Let $g_j=\frac{f_j}{E_j}$. Then $\int_{B_2} g_j\leq1$ and, on $B_{\frac{3}{2}}$,
\[
 \log^+g_j\leq v_j+C_n.
\]
Together with \eqref{eq:fv-energy}, this yields
\begin{equation}\label{eq:normalized-entropy}
 \int_{B_{\frac{3}{2}}}g_j\log^+g_j\dd x\leq C_n.
\end{equation}
In particular $(g_j)$ is uniformly integrable there. Define the spatial constant
\[
 a_j=\frac{e^{h_j(0)}}{E_j}.
\]
Centered subharmonicity on the full ball $B_2$, followed by $e^{h_j}\leq f_j$, yields
\begin{equation}\label{eq:constant-mass-bound}
 0<a_j\leq\frac{M_j}{\abs{B_2}E_j}\leq\frac{1}{\abs{B_2}}.
\end{equation}
The convergence to this moving constant can be quantified directly. Put $\delta_j=h_j-h_j(0)$ and use the exact identity
\[
 g_j-a_j=g_j(1-e^{-\delta_j})+a_j(e^{v_j}-1).
\]
The first term has $L^1(B_{\frac{3}{2}})$ norm at most $C_n\sqrt{E_j}$, since $\norm{\delta_j}_\infty\leq C_n\sqrt{E_j}$ and $\int_{B_2} g_j\leq1$. For $L\geq1$, split the non-negative second term into $\{v_j\leq L\}$ and $\{v_j>L\}$. Equations~\eqref{eq:fv-energy}--\eqref{eq:green-l1} yield
\[
\begin{aligned}
 \int_{\{v_j\leq L\}\cap B_{\frac{3}{2}}}a_j(e^{v_j}-1)\dd x
 &\leq C_n E_j e^L,\\
 \int_{\{v_j>L\}\cap B_{\frac{3}{2}}}a_j(e^{v_j}-1)\dd x
 &\leq \frac{C_n}{L}\int_{B_2}g_jv_j\dd x\leq\frac{C_n}{L}.
\end{aligned}
\]
Taking $L=\frac{\abs{\log E_j}}{2}$ for sufficiently large $j$ therefore proves
\begin{equation}\label{eq:g-strong}
 \norm{g_j-a_j}_{L^1\left(B_{\frac{3}{2}}\right)}
 \leq\frac{C_n}{\abs{\log E_j}}\to0.
\end{equation}
No subsequence or limit of the constants $a_j$ is needed.

We also need the endpoint Hessian estimate
\begin{equation}\label{eq:normalized-hessian}
 \norm{D^2v_j}_{L^1(B_1)}\leq C_nE_j.
\end{equation}
We prove \eqref{eq:normalized-hessian} as follows. The equation for $\frac{v_j}{E_j}$ has right hand side $g_j$. The Hessian operators are bounded on $L^2$ and of weak type $(1,1)$, as verified in the proof of Lemma~\ref{lem:constancy}. For each such operator $T$, splitting a non-negative source $g$ at height $t\geq1$ yields the distribution bound
\[
 \abs{\{\abs{Tg}>t\}\cap B_1}
 \leq \frac{C_n}{t}\int_{\{g>t\}}g
       +\frac{C_n}{t^2}\int_{\{g\leq t\}}g^2.
\]
Integration over $t\geq1$ bounds the local $L^1$ norm by
\[
 C_n\left(1+\int_{B_{3/2}} g+\int_{B_{3/2}} g\log^+g\right).
\]
First truncate and then smooth the source, with convergence in $L\log L$. The same layer-cake argument starting at an arbitrary height $a>0$ yields, for a tail $g-\min(g,N)$, a bound of the form
\[
 a\abs{B_1}+C_n\int_{B_{3/2}} \abs{g-\min(g,N)}\left[1+\log^+\frac{\abs{g-\min(g,N)}}{a}\right].
\]
Letting $N\to+\infty$ for fixed $a$, then $a\downarrow0$, proves local $L^1$ convergence of the singular integrals and identifies the distributional Hessian with an $L^1$ function. Extending the restriction of $g_j$ to $B_{\frac{3}{2}}$ by zero produces the Newtonian part, and the remaining harmonic part is controlled on $B_1$ by its $L^1$ norm. That norm is uniformly bounded by \eqref{eq:green-l1} after division by $E_j$ and the $L^1$ Green bound for the extended source. This proves \eqref{eq:normalized-hessian} from \eqref{eq:normalized-entropy}.

Set
\[
 q_j=\frac{\nabla v_j}{\sqrt{E_j}},\quad
 b_j=\frac{\nabla h_j}{\sqrt{E_j}}.
\]
On $B_1$ we have $q_j\in W^{1,1}\cap L^2$, uniformly bounded $L^2$ norm, and
\[
 \norm{q_j}_{L^1}+\norm{Dq_j}_{L^1}\to0,
 \quad \norm{b_j}_{L^\infty}\leq C_n.
\]
The normalized stationary identity is
\[
 \op{div}\left[\cT(q_j+b_j)+g_jI\right]=0.
\]
The tensor $\cT(b_j)$ is divergence free because $h_j$ is harmonic and smooth. Thus
\[
 \op{div}[\cT(q_j)-R_j]=0,
\]
where
\[
 R_j=-q_j\otimes b_j-b_j\otimes q_j+(q_j\cdot b_j)I-(g_j-a_j)I.
\]
The preceding estimates yield, for all sufficiently large $j$,
\[
 \norm{q_j}_{L^1(B_1)}+\norm{Dq_j}_{L^1(B_1)}\leq C_n\sqrt{E_j},
 \quad
 \norm{R_j}_{L^1(B_1)}
 \leq C_n\left(\sqrt{E_j}+\frac{1}{\abs{\log E_j}}\right).
\]
Corollary~\ref{cor:coercivity}, rescaled from $B_2,B_1$ to $B_1,B_{\frac{1}{2}}$, therefore yields
\begin{equation}\label{eq:quantitative-harmapprox}
 \frac{1}{E_j}\int_{B_{\frac{1}{2}}}\abs{\nabla v_j}^2\dd x
 =\norm{q_j}_{L^2\left(B_{\frac{1}{2}}\right)}^2
 \leq\frac{C_n}{\abs{\log E_j}}\to0.
\end{equation}
Here $E_j+\sqrt{E_j}$ is absorbed into $\frac{C_n}{\abs{\log E_j}}$ for small $E_j$. This proves the asserted convergence and its quantitative bound.
\end{proof}

\subsection{Excess decay estimate}

By  the estimate in Lemma \ref{lem:harmapprox}, we now prove the following excess decay estimate.
\begin{lem}\label{lem:energy-improvement}
For every $n\geq3$ there is $\varepsilon_n>0$ such that every stationary weak solution on $B_2$ with $\mathcal E_u(0,2)\leq\varepsilon_n$ satisfies
\[
 \mathcal E_u\left(0,\frac{1}{2}\right)\leq\frac{1}{8}\mathcal E_u(0,2).
\]
The radius ratio and contraction factor are fixed. The smallness threshold may depend on $n$.
\end{lem}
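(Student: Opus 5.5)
We argue by contradiction combined with the quantitative harmonic approximation of Lemma~\ref{lem:harmapprox}. Note that $\mathcal E_u(0,2)=2^{2-n}E$ with $E=\int_{B_2}(\abs{\nabla u}^2+e^u)$, and $\mathcal E_u(0,\frac12)=2^{n-2}\int_{B_{1/2}}(\abs{\nabla u}^2+e^u)$. The claim is therefore equivalent to
\[
 \int_{B_{1/2}}(\abs{\nabla u}^2+e^u)\dd x\leq 2^{-3}4^{2-n}E .
\]
Suppose the claim fails. Then there are stationary solutions $u_j$ with $E_j\to0$ violating this inequality. Write $u_j=h_j+v_j$ as in Lemma~\ref{lem:harmapprox}, so that $E_j^{-1}\int_{B_{1/2}}\abs{\nabla v_j}^2\to0$. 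We estimate the three contributions to the inner energy separately, after dividing by $E_j$.

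\emph{Harmonic gradient part.} Set $b_j=\nabla h_j/\sqrt{E_j}$. Then $\int_{B_2}\abs{b_j}^2\leq1$. Each component of $b_j$ is harmonic, so $\abs{b_j}^2$ is subharmonic. The mean value inequality on balls $B_{3/2}(x)\subset B_2$ for $x\in B_{1/2}$ gives $\sup_{B_{1/2}}\abs{b_j}^2\leq \abs{B_{3/2}}^{-1}$. Hence
\[
 \int_{B_{1/2}}\abs{b_j}^2\leq (1/3)^n .
\]
This bound alone is not small enough against $2^{-3}4^{2-n}=2^{1-2n}$. To overcome this, I would use a more refined decay for harmonic functions. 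Expand $h_j$ in homogeneous harmonic polynomials and use that the $L^2$ norm of $\nabla h_j$ on $B_\rho$ scales like $\rho^{n+2k-2}$ for degree-$k$ pieces. This gives
\[
 \int_{B_{1/2}}\abs{\nabla h_j}^2\leq 4^{-n}\int_{B_2}\abs{\nabla h_j}^2 ,
\]
using that the constant-gradient (degree one) part is the worst case. Since $2^{-2n}<2^{1-2n}$, this leaves a margin of $2^{-2n}E_j$ for the other terms. The cross term is handled by Cauchy--Schwarz:
\[
 \abs{\nabla u_j}^2\leq(1+\delta)\abs{\nabla h_j}^2+(1+\delta^{-1})\abs{\nabla v_j}^2 ,
\]
with $\delta$ small but fixed. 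The first piece uses the margin, and the second vanishes after division by $E_j$ by Lemma~\ref{lem:harmapprox}.

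\emph{Exponential part.} By \eqref{eq:g-strong} together with \eqref{eq:constant-mass-bound}, we have
\[
 \int_{B_{1/2}}g_j\leq \abs{B_{1/2}}a_j+o(1),\qquad a_j\leq M_j/(\abs{B_2}E_j).
\]
Hence
\[
 \int_{B_{1/2}}e^{u_j}\leq 4^{-n}M_j+o(E_j).
\]
This is the main obstacle, because the harmonic and exponential terms must fit together under the single factor $2^{1-2n}$. Since $M_j+\int_{B_2}\abs{\nabla h_j}^2\leq E_j$, the total is bounded by
\[
 4^{-n}(1+\delta)E_j+o(E_j)\leq 2^{1-2n}E_j
\]
for $\delta<1$ and $j$ large. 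This contradicts the assumption and gives $\varepsilon_n$.

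I expect the delicate points to be two: verifying the sharp $4^{-n}$ harmonic decay constant for gradient energy, and checking that the mixed accounting of $M_j$ versus the Dirichlet energy uses the energy orthogonality correctly. If the degree-one bound were off by a constant, I would instead first prove decay $\mathcal E(0,\theta)\leq C\theta^2\mathcal E(0,2)$ with a small radius $\theta$, and then recover the stated radii by the monotone comparison $\mathcal E(0,1/2)\leq 4^{n-2}\ldots$. That route would need care, so I would aim for the sharp-constant argument above first.
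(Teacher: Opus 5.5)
Your proposal is correct and is essentially the paper's proof. Both argue by contradiction with $E_j\to0$, use Lemma~\ref{lem:harmapprox} for the Green part, use the decay $\int_{B_{1/2}}\abs{\nabla h_j}^2\leq 4^{-n}\int_{B_2}\abs{\nabla h_j}^2$, bound $\int_{B_{1/2}}e^{u_j}\leq 4^{-n}M_j+o(E_j)$ via \eqref{eq:constant-mass-bound} and \eqref{eq:g-strong}, and close with $D_{h,j}+M_j\leq E_j$. The differences are cosmetic: the paper gets the $4^{-n}$ harmonic decay from monotone ball averages of the subharmonic function $\abs{\nabla h_j}^2$, which is equivalent to your spherical-harmonic expansion, so the constant is sharp and your fallback is unnecessary; and it treats the cross term as $o(E_j)$ by Cauchy--Schwarz rather than with a $\delta$-Young inequality.
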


\begin{proof}
For a sequence with $E_j\to0$, use Lemma~\ref{lem:harmapprox} and its notation, and write
\[
 D_{h,j}=\int_{B_2}\abs{\nabla h_j}^2\dd x,\quad M_j=\int_{B_2}f_j\dd x.
\]
For every fixed $0<\theta\leq\frac{1}{2}$, centered subharmonicity of $\abs{\nabla h_j}^2$ on $B_2$ yields
\[
 \int_{B_\theta}\abs{\nabla h_j}^2\dd x\leq\left(\frac{\theta}{2}\right)^nD_{h,j}.
\]
The $v_j$ gradient energy on the whole open ball $B_{\frac{1}{2}}$ is $o(E_j)$, so Cauchy--Schwarz makes the mixed energy $o(E_j)$ there as well. Equations~\eqref{eq:constant-mass-bound}--\eqref{eq:g-strong} yield
\[
 \int_{B_\theta}f_j\dd x
 =E_j a_j\abs{B_\theta}+o(E_j)
 \leq\left(\frac{\theta}{2}\right)^n M_j+o(E_j).
\]
Since $D_{h,j}+M_j\leq E_j$, adding the two estimates yields
\[
 \int_{B_\theta}(\abs{\nabla u_j}^2+e^{u_j})\dd x
 \leq\left(\frac{\theta}{2}\right)^n E_j+o(E_j).
\]
Thus the exact scale factors yield
\[
 \limsup_j\frac{\mathcal E_{u_j}(0,\theta)}{\mathcal E_{u_j}(0,2)}
 \leq\left(\frac{\theta}{2}\right)^2.
\]
At $\theta=\frac{1}{2}$ the right side is $\frac{1}{16}$. If the claimed threshold did not exist, a sequence with $\mathcal E_{u_j}(0,2)\to0$ and energy ratio greater than $\frac{1}{8}$ would contradict this bound.
\end{proof}

\subsection{Regularity estimate and control on the singular set}
In this subsection we first finish the proof of Theorem \ref{thm:weak-epsilon}. This is almost standard now, because
iteration of the excess decay estimate at every interior center bounds the Newtonian potential.

\begin{proof}[Proof of Theorem~\ref{thm:weak-epsilon}]
Put $\sigma=\frac{1}{4}$. Scaling Lemma~\ref{lem:energy-improvement} yields
\[
 \mathcal E_u(x,\sigma R)\leq\frac{1}{8}\mathcal E_u(x,R)
\]
whenever $B_R(x)$ is in the domain and $\mathcal E_u(x,R)\leq\varepsilon_n$. For every $x\in B_{\frac{1}{2}}$, $B_1(x)\subset\subset B_2$ and
\[
 \mathcal E_u(x,1)\leq2^{n-2}\mathcal E_u(0,2).
\]
Choose $\varepsilon_n^*\leq2^{2-n}\varepsilon_n$. Iteration at all such centers, followed by comparison with neighboring discrete radii, yields
\begin{equation}\label{eq:improved-morrey}
 \int_{B_r(x)}(\abs{\nabla u}^2+e^u)\dd y
 \leq C_n E r^{n-2+\alpha},\quad
 x\in B_{\frac{1}{2}},\quad0<r\leq1,
\end{equation}
where $E=\int_{B_2}(\abs{\nabla u}^2+e^u)$ and $\alpha=\frac{\log(\frac{1}{8})}{\log(\frac{1}{4})}=\frac{3}{2}$.

Let $V$ be the Newtonian potential of $e^u\mathbf1_{B_1}$. Summing its integral over dyadic annuli centered at $x\in B_{\frac{1}{2}}$, using \eqref{eq:improved-morrey}, yields
\[
 0\leq V(x)\leq C_nE\sum_{k\geq0}2^{-k\alpha}+C_nE\leq C_nE.
\]
The distribution $H=u-V$ is harmonic on $B_1$, and therefore smooth there. It follows that $u$ is locally bounded above on $B_{\frac{1}{2}}$, so the weak equation and interior elliptic bootstrap make $u$ smooth on that ball. Since $V\geq0$, $e^H\leq e^u$. Subharmonicity of $e^H$ yields
\[
 \sup_{B_{\frac{1}{4}}}e^H\leq C_n\int_{B_1}e^u\dd x.
\]
Together with the bound for $V$, and after fixing the small energy threshold, this yields the quantitative estimate in the theorem.
\end{proof}
With this $\ep$-regularity in hand, the Hausdorff dimension estimate on the singular set is also standard. Here we give the proof for completeness.
\begin{proof}[Proof of Corollary \ref{coro:Hausdorff dim of singular set}]
Work first on a compact set contained in a slightly larger relatively compact open subset of a general domain, and set $w=\abs{\nabla u}^2+e^u\in L^1$ on that larger set. The density set
\[
 Z=\left\{x:\limsup_{r\downarrow0}r^{2-n}\int_{B_r(x)}w\dd y>0\right\}
\]
has zero $\mathcal H^{n-2}$ measure. For completeness, split $w$ into a bounded part and its $L^1$ tail. The bounded part has vanishing density at this exponent. A Vitali covering bounds the Hausdorff measure of each positive-density level by a constant times the tail integral. Letting the truncation height tend to infinity proves the assertion. At every point outside $Z$, a sufficiently small ball satisfies the epsilon hypothesis. Thus the solution is smooth near that point, and its relatively closed singular set is contained in $Z$.
\end{proof}

\section{Explicit solutions and limits of the hypotheses}\label{sec:calibrations}

In this section, we use some explicit solution families to test the contraction factor, the energy normalization, and the distinction between weak solutions and stationary weak solutions. These examples show the sharpness of various assumptions in our regularity theorems.

\subsection{The limiting contraction factor}

Small radial Dirichlet solutions show why the energy improvement needs a strict margin above its limiting ratio.

The contraction factor $\frac{1}{8}$ in Lemma~\ref{lem:energy-improvement} is larger than the limiting ratio $\frac{1}{16}$. That endpoint cannot replace $\frac{1}{8}$ for all sufficiently small solutions. To see this, let $z_\lambda$ be the small zero-boundary solution on $B_2$ of
\[
 -\Delta z_\lambda=\lambda e^{z_\lambda},\quad
 z_\lambda=0\quad\text{on }\partial B_2.
\]
For small $\lambda>0$, the Dirichlet Green operator is a contraction on a fixed small $L^\infty$ ball, yielding this smooth radial solution. Its first expansion is
\[
 z_\lambda(x)=\lambda\frac{4-\abs{x}^2}{2n}+O(\lambda^2)
 \quad\hbox{in }C^1(\overline B_2).
\]
Then $u_\lambda=\log\lambda+z_\lambda$ is a classical exact solution of the original equation. Writing $\omega_n=\abs{B_1}$, direct integration yields, at each fixed $0<r\leq2$,
\[
 \mathcal E_{u_\lambda}(0,r)
 =\omega_n r^2\left[
 \lambda+\lambda^2\left(\frac{2}{n}-\frac{(n-2)r^2}{2n(n+2)}\right)
 +O(\lambda^3)\right].
\]
Consequently
\[
 \frac{\mathcal E_{u_\lambda}\left(0,\frac{1}{2}\right)}{\mathcal E_{u_\lambda}(0,2)}
 =\frac{1}{16}\left[1+\lambda\frac{15(n-2)}{8n(n+2)}+O(\lambda^2)\right]
 >\frac{1}{16}
\]
for all sufficiently small $\lambda>0$, while the outer energy tends to zero.

\subsection{Singular cylinders and dimension dependence}

The following cylindrical solutions show why the signed variational energy cannot replace the positive energy, and why the scaling and dimension dependence matter.

The smallness condition concerns the sum of two positive terms. Replacing it by the signed variational energy would produce a false criterion: a codimension-three logarithmic cylinder has $\frac{1}{2}\abs{\nabla u}^2-e^u=0$ almost everywhere while remaining singular.

More generally, if $x=(y,z)\in\R^k\times\R^{n-k}$ with $3\leq k\leq n$, then
\[
 u(y,z)=\log(2(k-2))-2\log\abs{y}
\]
is a stationary $H^1_{\rm loc}$ weak solution and is singular on $\{y=0\}$. The weak-equation flux vanishes at the core. For $k>3$ the absolute stress flux vanishes. For $k=3$ its leading angular integral cancels and the remaining error tends to zero. At a ball centered on the singular plane, direct integration yields
\[
 r^{2-n}\int_{B_r}e^u\dd x=2\abs{\mathbb S^{n-1}},\quad
 r^{2-n}\int_{B_r}\abs{\nabla u}^2\dd x=\frac{4\abs{\mathbb S^{n-1}}}{k-2}.
\]
Thus these examples have a positive fixed-dimensional invariant energy and do not contradict the theorem. They also show why the scale factor cannot be omitted.

With the unnormalized Euclidean measure convention used here, a threshold independent of all dimensions is impossible. For the radial example $k=n$, its invariant energy is
\[
 \left(2+\frac{4}{n-2}\right)\abs{\mathbb S^{n-1}}\to0
 \quad\hbox{as }n\to+\infty,
\]
although the origin remains singular. The main theorem accordingly allows $\varepsilon_n$ to depend on $n$.

\subsection{A weak solution with a stationary stress defect}

Here we give a non-radial homogeneous family, showing that the weak equation alone does not imply the domain-variation identity.
Thus the stationary identity is independent of the weak equation. 

In $\R^3$, for $0<t<1$, the homogeneous function
\[
 w_t(x)=\log\left(2(1-t^2)\right)-2\log\left(\abs{x}-t x_3\right)
\]
belongs to $H^1_{\rm loc}$ and solves $-\Delta w_t=e^{w_t}$ distributionally. Direct differentiation verifies the equation away from zero, and its weak-equation flux is $O(r)$ at the origin. For the stationary stress $S_t=\cT(\nabla w_t)+e^{w_t}I$, direct spherical integration instead yields
\[
 \int_{\partial B_r}S_t\nu\dd \sigma
 =-\frac{8\pi}{t^2}\left(\log\frac{1+t}{1-t}-2t\right)e_3\ne0.
\]
Thus $\op{div} S_t$ is a non-zero point force. Constant extension in additional coordinates retains this defect on a codimension-three plane. The positive invariant energy in the original three dimensions is
\[
 \int_{B_1}(\abs{\nabla w_t}^2+e^{w_t})\dd x
 =\frac{16\pi}{t}\log\frac{1+t}{1-t}-8\pi\geq24\pi.
\]

\section{Blow-up analysis in arbitrary dimension}\label{sec:blowup}

In this section, we apply Theorem~\ref{thm:weak-epsilon} to extend
the three-dimensional blow-up analysis of Da Lio and Hyder
\cite[Theorem~1.3]{DaLioHyder} to arbitrary dimensions.
The main step is to prove that the Green part converges strongly
to zero when the mass tends to zero and the Dirichlet energy
remains bounded. This allows us to replace the smallness assumption
on the Dirichlet energy in Theorem \ref{thm:weak-epsilon} by an a priori bound, and then establish
a compactness alternative for sequences of stationary solutions.
Under an additional local Morrey bound on the Dirichlet energy,
we prove strong compactness and study tangent solutions and
the dimension of the singular set.

Throughout this section $s=n-2$ and
\[
 M_u(x,r)=r^{-s}\int_{B_r(x)}e^u\dd y,\quad
 D_u(x,r)=r^{-s}\int_{B_r(x)}\abs{\nabla u}^2\dd y.
\]
Thus $\mathcal E_u=M_u+D_u$. Stationary weak solutions are understood
in the sense of \eqref{equationLiouville} and
\eqref{eq:stationarity-definition}.

\subsection{\texorpdfstring{$L\log L$}{} estimate and two-dimensional slices}

We first establish an $L\log L$ estimate for $e^u$ in terms of
the mass and the Dirichlet energy. As in the proof of
Lemma~\ref{lem:harmapprox}, this estimate gives an $L^1$ bound
for the Hessian of the Green part. Here the estimate must be
independent of any lower bound for the mean of $u$, since this
mean may tend to $-\infty$ in the blow-up analysis.
\begin{lem}\label{lem:ba-entropy}
Let $u\in H^1(B_2)$ be a weak solution with $e^u\in L^1(B_2)$.
Then
\begin{equation}\label{eq:ba-entropy}
 \int_{B_{\frac{3}{2}}}e^u\log(2+e^u)\dd x
 \leq C_n\int_{B_2}(\abs{\nabla u}^2+e^u)\dd x.
\end{equation}
Let $h$ be the harmonic function with the boundary trace of $u$ and
put $v=u-h\in H^1_0(B_2)$. Then $v\geq0$,
\begin{equation}\label{eq:ba-green}
 \int_{B_2}(\abs{\nabla h}^2+\abs{\nabla v}^2)\dd x
 =\int_{B_2}\abs{\nabla u}^2\dd x,\quad
 \norm{v}_{W^{1,1}(B_2)}\leq C_n\int_{B_2}e^u.
\end{equation}
\end{lem}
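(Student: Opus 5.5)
The plan is to run the same decomposition as in the proof of Lemma~\ref{lem:harmapprox}, but without using smallness of $E$. For this we need one extra ingredient: the mass of $e^u$ on a slightly smaller ball is controlled by the square root of the Dirichlet energy. Write $D=\int_{B_2}\abs{\nabla u}^2$, $M=\int_{B_2}e^u$ and $f=e^u$.

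The assertions in \eqref{eq:ba-green} come first, exactly as in Lemma~\ref{lem:harmapprox}. Since $-\Delta v=f\geq0$ with $v\in H^1_0(B_2)$, the weak maximum principle gives $v\geq0$. Harmonicity of $h$ and $v\in H^1_0(B_2)$ give the orthogonality $\int_{B_2}\nabla h\cdot\nabla v=0$, hence the energy identity. The Green bounds $G\leq C_n\abs{x-y}^{2-n}$ and $\abs{\nabla_xG}\leq C_n\abs{x-y}^{1-n}$ give $\norm{v}_{W^{1,1}(B_2)}\leq C_nM$. Testing the equation with $\min(v,L)$ and letting $L\to+\infty$ gives, as in \eqref{eq:fv-energy},
\[
\int_{B_2}fv\dd x=\int_{B_2}\abs{\nabla v}^2\dd x\leq D .
\]

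Next comes the local mass bound. Choose a cutoff $\zeta\in C_c^\infty(B_2)$ with $0\leq\zeta\leq1$ and $\zeta=1$ on $B_{7/4}$. Testing the weak equation with $\zeta$ and applying Cauchy--Schwarz gives
\[
m:=\int_{B_{7/4}}e^u\dd x\leq\int_{B_2}\nabla u\cdot\nabla\zeta\dd x\leq C_n\sqrt D .
\]
Clearly also $m\leq M$.

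Then bound the harmonic part pointwise. The function $e^h$ is subharmonic and $h\leq u$ because $v\geq0$. So for $x\in B_{3/2}$ the mean value inequality on $B_{1/4}(x)\subset B_{7/4}$ gives $e^{h(x)}\leq C_nm$.

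Now use the elementary inequality $2+ab\leq(2+a)b$, valid for $a\geq0$ and $b\geq1$, with $a=e^h$ and $b=e^v$. This yields, on $B_{3/2}$,
\[
\log(2+e^u)\leq\log(2+C_nm)+v .
\]
Multiplying by $e^u$ and integrating over $B_{3/2}$ gives
\[
\int_{B_{3/2}}e^u\log(2+e^u)\leq m\log(2+C_nm)+\int_{B_2}fv\leq C_n(m+m^2)+D .
\]
Finally $m\leq M$ and $m^2\leq C_nD$, so the right-hand side is at most $C_n(M+D)$, which is \eqref{eq:ba-entropy}.

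The main obstacle is precisely the absence of smallness. The naive bound $M\log(2+C_nM)$ from the harmonic part is not linear in the energy. The key step is therefore the quadratic local estimate $m^2\leq C_nD$ obtained by testing with a cutoff. Because of it, the estimate must be localized to $B_{7/4}$ rather than $B_2$: both the sup bound on $e^h$ and the mass integral are taken inside $B_{7/4}$. The truncation justification for $\int fv=\int\abs{\nabla v}^2$ is routine and is copied from Lemma~\ref{lem:harmapprox}.
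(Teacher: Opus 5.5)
Your proof is correct, but for \eqref{eq:ba-entropy} it takes a different route from the paper. The paper does not use the Green decomposition for the entropy bound. It tests the equation with $\eta^2\min(u^+,L)$, where $\eta\in C_c^\infty(B_2)$ equals one on $B_{3/2}$. It absorbs the cutoff cross term by Young's inequality and bounds the leftover $\int(u^+)^2\abs{\nabla\eta}^2$ by $C\int_{B_2}e^u$ using $(u^+)^2\leq Ce^u$. This gives $\int\eta^2e^uu^+\leq C_n(D+M)$ directly, and $\log(2+e^u)\leq C+u^+$ finishes.

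You instead rerun the mechanism behind \eqref{eq:normalized-entropy} in Lemma~\ref{lem:harmapprox}: positivity of $v$, the identity $\int fv=\int\abs{\nabla v}^2$, and the mean-value bound for the subharmonic function $e^h$. In place of the smallness of $E$ used there, you use the local mass bound $m\leq C_n\sqrt D$, obtained by testing with a cutoff, which makes $m\log(2+C_nm)$ linear in $M+D$. All your steps check out, including $2+ab\leq(2+a)b$ for $b\geq1$ and the localization to $B_{7/4}$.

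The paper's Caccioppoli-type test is shorter and self-contained, needing neither the maximum principle nor the Green representation for this part. Your version is heavier but shows that the entropy is carried by the Green part, $\log(2+e^u)\leq\log(2+C_nm)+v$ on $B_{3/2}$. It also makes explicit why the smallness hypothesis in Lemma~\ref{lem:harmapprox} is not needed for the $L\log L$ bound.
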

This lemma implies that a uniform bound for the right hand side of \eqref{eq:ba-entropy} also controls
$D^2v$ in $L^1$ on each compact subset of $B_2$. 
\begin{proof}
Choose $\eta\in C_c^\infty(B_2)$ equal to one on $B_{\frac{3}{2}}$ and test the
equation with $\eta^2\min(u^+,L)$. The truncation justification is the
same as for \eqref{eq:fv-energy}. The resulting identity, Young's
inequality, and $(u^+)^2\leq C e^u$ yield
\[
 \int\eta^2 e^u\min(u^+,L)
 \leq C_n\int_{B_2}(\abs{\nabla u}^2+e^u).
\]
Let $L\to\infty$ and use $\log(2+e^u)\leq C+u^+$.
The Green assertions follow from orthogonality and
\eqref{eq:green-l1}. Finally apply the local $L\log L$ to $L^1$
Hessian estimate proved in \eqref{eq:normalized-hessian}, using a cutoff
on an intermediate ball and estimating the remaining harmonic function
by its $L^1$ norm. Rescaling and localization yield the statement on
every compact subset of $B_2$.
\end{proof}
The preceding Hessian bound alone does not exclude concentration
of the Dirichlet energy. To make use of the equation, we estimate
the weak-$L^2$ norm of the gradient on two-dimensional slices.
Combining this estimate with the two-dimensional
Lorentz--Sobolev inequality, we show that the energy on these
slices converges to zero in measure when the source tends to zero.
\begin{lem}\label{lem:ba-slice}
Write $x=(z,t)\in\R^s\times\R^2$. For $f\in L^1(\R^n)$ set
\[
 I_1f(x)=\int_{\R^n}\abs{x-y}^{1-n}\abs{f(y)}\dd y,
 \quad F(z)=\int_{\R^2}\abs{f(z,t)}\dd t.
\]
If $\mathcal M_s$ is the Hardy--Littlewood maximal operator on $\R^s$, then
\begin{equation}\label{eq:ba-sliceweak}
 \norm{I_1f(z,\cdot)}_{L^{2,\infty}(\R^2)}
 \leq C_n\mathcal M_sF(z)\quad\hbox{for almost every }z.
\end{equation}
Consequently, suppose $w_j\to0$ in $W^{1,1}_{\loc}(B_4)$,
$-\Delta w_j\to0$ in $L^1(B_4)$, and $(D^2w_j)$ is bounded in
$L^1(B_3)$. For $\eta\in C_c^\infty(B_2^2)$, the functions
\begin{equation}\label{eq:ba-slicevanish}
 z\mapsto\int_{\R^2}\eta(t)^2\abs{\nabla w_j(z,t)}^2\dd t
\end{equation}
tend to zero in measure on $B_2^s$.
\end{lem}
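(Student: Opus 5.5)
For the slice estimate \eqref{eq:ba-sliceweak}, I would fix $z$ and split the $y=(z',t')$ integration dyadically in $\abs{z-z'}$. Write $I_1f(z,t)=\sum_{k\in\Z}J_k(t)$, where $J_k$ integrates over the shell $2^{k-1}<\abs{z-z'}\leq 2^k$, together with the diagonal piece $z'=z$, which is null and can be ignored. On the $k$th shell the kernel satisfies
\[
\abs{x-y}^{1-n}\leq \min\left(2^{(k-1)(1-n)},\abs{t-t'}^{1-n}\right).
\]
So $J_k(t)\leq \int_{\R^2}K_k(t-t')\,\Phi_k(t')\dd t'$, where $\Phi_k(t')=\int_{\abs{z-z'}\leq 2^k}\abs{f(z',t')}\dd z'$ and $K_k(\tau)=\min(2^{(k-1)(1-n)},\abs{\tau}^{1-n})$. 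It is better not to split into shells but to work directly with the layer function
\[
\Psi(r)=\int_{\abs{z'-z}\leq r}\int_{\R^2}\abs{f}\dd t'\dd z'\leq C r^s\mathcal M_sF(z).
\]
The weak-$L^2$ quasinorm on $\R^2$ is equivalent to a norm (via $\sup_E\abs{E}^{-1/2}\int_E$). Hence Minkowski applies in the form
\[
\norm{I_1f(z,\cdot)}_{L^{2,\infty}}\leq C\int_0^\infty \norm{\,(\rho^2+\abs{\cdot}^2)^{(1-n)/2}}_{L^{2,\infty}(\R^2)}\dd\Psi(\rho),
\]
where $\rho$ denotes the $z$-distance and convolution with a probability measure in $t'$ does not increase the norm.

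Next I compute $\norm{(\rho^2+\abs{\tau}^2)^{(1-n)/2}}_{L^{2,\infty}(\R^2)}\simeq \rho^{1-n}\cdot\rho=\rho^{2-n}=\rho^{-s}$. This holds for $n\geq3$, since $\abs{\tau}^{1-n}$ lies in $L^{2,\infty}$ on $\{\abs\tau\geq\rho\}$ with norm $\sim\rho^{2-n}$, and the core $\abs\tau<\rho$ contributes $\rho^{1-n}\rho$. Integrating by parts, $\int_0^\infty\rho^{-s}\dd\Psi=s\int_0^\infty\rho^{-s-1}\Psi(\rho)\dd\rho$. Used as is, this diverges logarithmically, since $\Psi\leq Cr^s\mathcal M_sF$ only gives a bound $\int\rho^{-1}\dd\rho$. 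This divergence is the main obstacle. The fix is not to sum norms but to sum the distribution estimate directly. For $\lambda>0$, split $\rho<\delta$ and $\rho\geq\delta$ with $\delta=(\mathcal M_sF/\lambda)^{1/s}$-type scaling. On the far part, the pointwise bound $\int_{\rho\geq\delta}\rho^{1-n}\dd\Psi\leq C\delta^{-1}\mathcal M_sF$ gives an $L^\infty$ piece below $\lambda/2$ with an appropriate choice. On the near part, use the $L^1$ bound of the pure two-dimensional Riesz potential $\int\abs{\tau}^{-1}$ against mass $\Psi(\delta)$, together with Chebyshev and the weak-$(1,2)$ estimate for $I_1^{(2)}$ in $\R^2$. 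Here I also use $\abs{x-y}^{1-n}\leq \rho^{2-n}\abs{t-t'}^{-1}$, which makes the near part a two-dimensional $I_1$ of a measure of mass $\int_{\rho<\delta}\rho^{2-n}\dd\Psi$. This is again dangerous, so instead I balance the two kernels $\min(\rho^{1-n},\abs\tau^{1-n})$ level by level, choosing the split adapted to $\lambda$, and expect the sum $\abs{\{I_1f>\lambda\}}\leq C(\mathcal M_sF/\lambda)^2$ to close geometrically.

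For the consequence \eqref{eq:ba-slicevanish}, localize: take a cutoff $\chi$ equal to one on $B_{5/2}$ and write $\chi w_j=\Gamma*(-\Delta(\chi w_j))$. Then $\nabla(\chi w_j)$ is pointwise bounded by $C I_1 f_j$, with $f_j=-\chi\Delta w_j-2\nabla\chi\cdot\nabla w_j-w_j\Delta\chi$, and $\norm{f_j}_{L^1}\to0$. By \eqref{eq:ba-sliceweak} and the weak-$(1,1)$ bound for $\mathcal M_s$, the slice norms $\norm{\nabla w_j(z,\cdot)}_{L^{2,\infty}(B^2_2)}\to0$ in measure in $z$. Separately, $D^2w_j$ is bounded in $L^1$ and $\nabla w_j\to0$ in $L^1$. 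So by Fubini, for most $z$, $\eta\nabla w_j(z,\cdot)$ is bounded in $W^{1,1}(\R^2)\subset L^{2,1}(\R^2)$ by the two-dimensional Lorentz--Sobolev embedding, with the bound uniform outside a set of small measure by Chebyshev. The duality $\norm{g}_{L^2}^2\leq C\norm{g}_{L^{2,\infty}}\norm{g}_{L^{2,1}}$ then gives the slice energies a bound of the form (small in measure)$\times$(bounded in measure), so they tend to zero in measure.
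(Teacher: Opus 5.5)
The gap is in the proof of \eqref{eq:ba-sliceweak}, at the step that carries its content. You correctly see that Minkowski in $L^{2,\infty}$ loses a logarithm. You also see that one must split the source at a $\lambda$-dependent $z$-distance $\delta$. Your own far-part bound $C\delta^{-1}\mathcal M_sF(z)$ forces $\delta\simeq\mathcal M_sF(z)/\lambda$, not $(\mathcal M_sF/\lambda)^{1/s}$. But the near part $\{\rho<\delta\}$ is left at ``expect \dots{} to close geometrically'', and the tools you name do not close it.

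\begin{itemize}
\item The bound $\abs{x-y}^{1-n}\leq\rho^{2-n}\abs{t-t'}^{-1}$ together with the planar weak-$(1,2)$ estimate needs the mass $\int_0^\delta\rho^{-s}\dd\Psi(\rho)$. This mass is not controlled by $\Psi(\rho)\leq C\rho^s\mathcal M_sF(z)$: it diverges logarithmically at $\rho=0$.
\item $\abs{\tau}^{-1}$ is not integrable on $\R^2$, so an $L^1$ bound for the pure planar Riesz potential is unavailable.
\item The correct $L^1$ bound for the slice kernel, $\int_{\R^2}(\rho^2+\abs{\tau}^2)^{\frac{1-n}{2}}\dd\tau=C_n\rho^{3-n}$, holds only for $n\geq4$. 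There it gives near-part $L^1$ norm $\leq Cm\delta$, with $m=\mathcal M_sF(z)$, and Chebyshev closes. For $n=3$, however, the kernel $(\rho^2+\abs{\tau}^2)^{-1}$ is not integrable on the plane.
\end{itemize}

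The missing idea is to measure the near part in $L^p(\R^2)$ for an intermediate exponent $1<p<2$. The paper takes $p=\frac32$ on dyadic shells and obtains $\norm{a_k}_\infty\leq Cm2^{-k}$ and $\norm{a_k}_{L^{3/2}}\leq Cm2^{k/3}$. Since $p(n-1)>2$, one has $\norm{(\rho^2+\abs{\cdot}^2)^{\frac{1-n}{2}}}_{L^p(\R^2)}=C_{n,p}\rho^{\frac2p-(n-1)}$. Minkowski with $\Psi(\rho)\leq C\rho^sm$ then bounds the near part in $L^p$ by $Cm\delta^{\frac2p-1}$. The $\rho$-integral now converges at $0$ because $\frac2p-1>0$. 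Chebyshev gives $\abs{\{\text{near part}>\frac{\lambda}{2}\}}\leq C\lambda^{-p}m^p\delta^{2-p}\leq Cm^2\lambda^{-2}$ for $\delta\simeq m/\lambda$. Combined with the far part, this yields \eqref{eq:ba-sliceweak}.

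Your derivation of \eqref{eq:ba-slicevanish} then matches the paper's. Writing $\chi w_j=\Gamma*f_j$ is a valid substitute for its Newtonian potential plus harmonic remainder. One correction: $B^s_2\times\supp\eta$ reaches radius nearly $2\sqrt2>\frac52$, so $\chi$ should equal one on, for example, $B_{2.9}$ and be supported in $B_{7/2}$.
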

\begin{proof}
Fix $z$ with $m=\mathcal M_sF(z)<\infty$ and decompose $I_1f(z,t)$
as $\sum_{k\in\Z}a_k(t)$, restricting the source to
$2^{k-1}<\abs{z-\zeta}\leq2^k$ in $a_k$. For $p=\frac{3}{2}$, the maximal
bound and Minkowski's inequality yield
\[
 \norm{a_k}_\infty\leq C_nm2^{-k},\quad
 \norm{a_k}_{L^p(\R^2)}\leq C_nm2^{k(\frac{2}{p}-1)}.
\]
Indeed, the $L^p(\R^2)$ norm of
$(a^2+\abs{t}^2)^{-\frac{n-1}{2}}$ is $C_{n,p}a^{\frac{2}{p}-(n-1)}$.
For $\lambda>0$ choose $2^{k_0}$ comparable to $\frac{m}{\lambda}$, with
a sufficiently large fixed factor. The sum over $k>k_0$ is bounded
by $\frac{\lambda}{2}$, whereas Chebyshev's inequality for the remaining sum yields
\[
 \abs{\{t:I_1f(z,t)>\lambda\}}
 \leq C_n\lambda^{-p}m^p2^{k_0(2-p)}
 \leq C_nm^2\lambda^{-2}.
\]
The case $m=0$ is immediate. This proves \eqref{eq:ba-sliceweak}.

For the second assertion let $f_j=-\Delta w_j$, and let $N_j$ be the
Newtonian potential of $f_j\mathbf1_{B_4}$. Local integrability of the
potential kernel and its gradient implies that $N_j\to0$ in
$W^{1,1}_{\loc}(\R^n)$, and
$\abs{\nabla N_j}\leq C_n I_1(f_j\mathbf1_{B_4})$.
The difference $w_j-N_j$ is harmonic on $B_4$ and tends to zero in
local $L^1$, so its gradient tends to zero uniformly on $B_3$.
Equation~\eqref{eq:ba-sliceweak} and the
weak $(1,1)$ maximal estimate imply that
\[
 a_j(z):=\norm{\eta\nabla w_j(z,\cdot)}_{L^{2,\infty}(\R^2)}
 \to0\quad\hbox{in measure on }B_2^s.
\]
The two-dimensional Lorentz--Sobolev inequality yields
\[
 b_j(z):=\norm{\eta\nabla w_j(z,\cdot)}_{L^{2,1}(\R^2)}
 \leq C_\eta\int_{B_2^2}(\abs{D_t\nabla w_j}+\abs{\nabla w_j})\dd t,
 \quad \sup_j\int_{B_2^s}b_j<\infty.
\]
Here $B_2^s\times B_2^2\subset\subset B_3$. The Sobolev inequality follows
from coarea and the planar isoperimetric inequality, first for smooth
compactly supported functions and then by approximation in $W^{1,1}$.
Lorentz duality bounds \eqref{eq:ba-slicevanish} by $C a_jb_j$.
For $\epsilon,\delta>0$ the measure where this product exceeds
$\epsilon$ is at most
\[
 \abs{\{a_j>\delta\}}+C\delta\epsilon^{-1}\int b_j.
\]
Letting $j\to\infty$ and then $\delta\downarrow0$ proves
\eqref{eq:ba-slicevanish}.
\end{proof}

\subsection{Elimination of a source-free concentration measure}

We now use the slicing estimate to rule out concentration of
the gradient energy. The main difficulty is that the Hessians
are only bounded in $L^1$, so
Theorem~\ref{thm:abstractcompact} cannot be applied directly.
Under a lower density bound for the concentration measure,
we can repeat the dimension reduction argument of
Section~\ref{sec:compactness}. The final two-dimensional step
is then provided by Lemma~\ref{lem:ba-slice}.

\begin{lem}\label{lem:ba-defect}
Let $G\subset\subset\R^n$ be open and $q_j=\nabla w_j$, where
$w_j\in W^{2,1}_{\loc}(G)$, $w_j\to0$ in $W^{1,1}_{\loc}(G)$,
and $-\Delta w_j\to0$ in $L^1_{\loc}(G)$. Suppose $q_j\in L^2_{\loc}(G)$ and
\[
 \sup_j\int_K(\abs{q_j}^2+\abs{Dq_j})<\infty
 \quad(K\subset\subset G),\quad
 \op{div}(\mathcal T(q_j)-R_j)=0,\quad
 R_j\to0\text{ in }L^1_{\loc}(G),
\]
where $R_j\in L^1_{\loc}(G;\R^{n\times n})$ and
$\mathcal T(q)=q\otimes q-\frac{1}{2}\abs{q}^2 I$.
Take weak measure limits $q_j\otimes q_j\dd x\rightharpoonup A$ and
$\abs{q_j}^2\dd x\rightharpoonup\mu=\tr A$.
Assume that on every $K\subset\subset G$ there are $a_K,r_K>0$ such that
\begin{equation}\label{eq:ba-lowerdensity}
 \mu(B_r(x))\geq a_Kr^s
 \quad(x\in K\cap\supp\mu,\ 0<r<r_K).
\end{equation}
Then $\mu=0$.
\end{lem}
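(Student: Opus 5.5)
Suppose, for contradiction, that $\mu\neq0$. The plan is to run the dimension reduction of Section~\ref{sec:compactness} directly at the level of the limiting measure, and to replace the terminal Lemma~\ref{lem:terminal} by the slice estimate of Lemma~\ref{lem:ba-slice}.

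\textbf{Properties of the limit.} First record what the limit satisfies. We have $A\geq0$ and $\tr A=\mu$. Since $q_j\to0$ in $L^1_{\loc}$ and $R_j\to0$ in $L^1_{\loc}$, passing to the limit in the stress equation gives $\op{div}(A-\frac12\mu I)=0$ in $G$. Lemma~\ref{lem:monotone}, localized to balls in $G$ (the mollification proof is local), then gives monotonicity of $r^{-s}\mu(B_r(x))$ at every $x\in G$. Combined with the uniform local bounds, this yields an upper density bound $\mu(B_r(x))\leq C_Kr^s$ on compacts. Together with \eqref{eq:ba-lowerdensity}, $\mu$ is locally $s$-Ahlfors regular on its support.

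\textbf{Blow-up to cones.} Pick $x_0\in\supp\mu$. Rescale with the natural normalization $r^{-s}$ and $r\downarrow0$. By the two-sided density bounds, the rescalings of $A$ and $\mu$ subconverge to a nonzero stationary pair $(A_0,\mu_0)$ on $\R^n$. The density $\Theta(x_0)=\lim r^{-s}\mu(B_r(x_0))$ exists and is at least $a_K$. The monotonicity identity \eqref{eq:monotone} forces $A_0(x)x=0$, so $\mu_0$ is a cone.

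Now iterate as in Lemma~\ref{lem:gain}, choosing a typical point away from the current invariance subspace $V$. Blowing up again there gains a new direction $e$ with $Ae=0$ and $\partial_e\mu=0$. The lower density bound survives each blow-up, because it is scale invariant and holds at support points of limits by upper semicontinuity. Each blow-up must also be realized by a diagonal sequence of rescaled $w_j$, exactly as in the proof of Lemma~\ref{lem:gain}. The hypotheses are preserved under rescaling $\widehat w(z)=r^{-1}\sqrt{r^{s}/M}\,w_j(x_0+rz)$: the $L^1$ Hessian bounds scale as $r^{n-1}\cdot r^{-1}\cdots$, and at worst they stay bounded after the choice of normalization, since $\int|D\widehat q|\sim r^{1-n/2}\cdots$.

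This scaling check is the delicate point. If the Hessian bound is not scale invariant in the right direction, I would instead keep only the convergence $-\Delta\widehat w\to0$ in $L^1$ and $\widehat w\to0$ in $W^{1,1}$. I would then use the $L^1$ Hessian bound solely in the final slicing step, and arrange the blow-ups via a diagonal choice so that the normalized Hessian norm on $B_3$ stays bounded. This is possible because $\int_{B_r}|D^2w_j|\leq C$ gives, after normalizing by $\sqrt{M/r^{s}}\approx\sqrt{\Theta}$, a factor $r^{1-n}\cdot r^{n/2}\cdot r^{\ldots}$. If that fails, I would pass to a good point where $\limsup r^{-s}\int_{B_r}|D^2 w_j|$ is finite, using a maximal/Besicovitch argument as in \eqref{eq:badmass}. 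The lower density bound then lets me choose, at $\mu$-typical points, scales where the Hessian mass ratio relative to $\mu$ is bounded.

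\textbf{Terminal step.} After $s$ steps we obtain a nonzero stationary pair, realized by a sequence $\widehat w_j$, invariant in $s$ directions, with $\int|\operatorname{proj}_V\widehat q_j|^2\to0$. Exactly as in Lemma~\ref{lem:terminal}, the slice energies $F_j(z)=\int\eta^2|\widehat q_j|^2\dd t$ satisfy $\nabla F_j=\op{div}E_j+b_j$ with $E_j,b_j\to0$ in $L^1$. By Lemma~\ref{lem:constancy} and Remark~\ref{rem:strong-constancy}, $F_j\to\theta>0$. But Lemma~\ref{lem:ba-slice} applied to $\widehat w_j$ gives $F_j\to0$ in measure, a contradiction. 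Hence $\mu=0$.

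The main obstacle is the second step: keeping the $L^1$ Hessian bound, which is needed only in the terminal step, under control through the blow-ups. This is precisely where the lower density hypothesis \eqref{eq:ba-lowerdensity} is used.
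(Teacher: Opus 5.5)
Your route is the paper's: limiting stationarity with local monotonicity and two-sided density bounds, cone tangents, iterated blow-ups at typical support points off $V$ realized by diagonal sequences, and the terminal clash between Lemma~\ref{lem:constancy} and Lemma~\ref{lem:ba-slice}. The step you flag as the main obstacle is muddled as written, but your last fallback is exactly the paper's resolution.

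\textbf{The normalization and the Hessian bound.} Your formula $\widehat w(z)=r^{-1}\sqrt{r^s/M}\,w_j(x_0+rz)$ has a spurious factor $r^{-1}$: it gives $\int_{B_1}\abs{\widehat q}^2\approx r^{-2}$. In fact there is no scaling obstruction. Take $\widehat w(y)=w_j(x+ry)$, $\widehat q=r\,q_j(x+r\cdot)$ and $\widehat R=r^2R_j(x+r\cdot)$. Then
\[
 \int_{B_m}\abs{\widehat q}^2\dd y=r^{2-n}\int_{B_{mr}(x)}\abs{q_j}^2\dd y,\qquad
 \int_{B_m}\abs{D\widehat q}\dd y=r^{2-n}\int_{B_{mr}(x)}\abs{Dq_j}\dd y.
\]
So the $L^1$ Hessian norm scales exactly like the Dirichlet energy. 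The $W^{1,1}$, Laplacian and $R$ errors still vanish at each fixed scale as $j\to\infty$.

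Hence the rescaled Hessians stay bounded as soon as $r^{-s}\beta(B_{mr}(x))$ does, where $\beta$ is the weak limit of $\abs{Dq_j}\dd x$. Your ``$\limsup r^{-s}\int_{B_r}\abs{D^2w_j}$'' must be read with $\beta$ in place of the floating index $j$. The paper takes $x$ with $\limsup_{r\downarrow0}\beta(B_r(x))/\mu(B_r(x))<\infty$, which holds $\mu$-a.e.\ by the differentiation theorem. Combined with the upper bound $\mu(B_r(x))\leq C_Kr^s$, this gives $\limsup_\ell\int_{B_m}(\abs{\widehat q_\ell}^2+\abs{D\widehat q_\ell})\leq C_xm^s$. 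The bound transfers to the sequence by choosing $j(\ell)$ late against finitely many cutoffs.

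This must be redone at every stage for the current realizing sequence. You also need $\mu(V)=0$, which follows from the upper $s$-growth and $\dim V<s$, to find such a typical point off $V$.

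\textbf{Where the lower density bound is used.} Your closing sentence misplaces the role of \eqref{eq:ba-lowerdensity}: the Hessian control uses only the upper density bound. The lower bound gives $\Theta\in(0,\infty)$ at every support point of every iterated tangent. That makes each blow-up at the chosen center a non-zero cone with $\widehat P(y)y=0$. This radial kernel is what produces the next direction via $e^TP(y)e\leq a^{-2}\abs{x-y}^2$.

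In Section~\ref{sec:compactness} that role was played by the critical normalization of Lemma~\ref{lem:critical}. That normalization relies on $Dq_j\to0$ in $L^1$, which is unavailable here. A diffuse stationary pair such as $A=\frac1nI\dd x$, with $\mu=\dd x$, has zero $s$-density everywhere. It shows that without the lower bound the $r^{-s}$ blow-ups can vanish and no cone structure is available.
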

\begin{proof}
Write $A=P\mu$, with $P\geq0$ and $\tr P=1$ $\mu$-almost everywhere.
Kernel statements for a matrix measure are understood in this sense,
as in the conventions of the introduction. The limit satisfies $A\geq0$ and
$\op{div}(A-\frac{1}{2}\mu I)=0$. The local version of
Lemma~\ref{lem:monotone} yields a non-decreasing density and, using
\eqref{eq:ba-lowerdensity},
\begin{equation}\label{eq:ba-densitylimit}
 0<\Theta(x):=\lim_{r\downarrow0}r^{-s}\mu(B_r(x))<\infty
 \quad(x\in\supp\mu).
\end{equation}
There is also a uniform upper bound $\mu(B_r(x))\leq C_Kr^s$ for
small balls centered in each fixed compact subset, by comparing with a
fixed larger radius in the monotonicity formula.

Pass to a further subsequence with $\abs{Dq_j}\dd x\rightharpoonup\beta$.
At $\mu$-almost every $x$ the differentiation theorem for Radon measures yields
\begin{equation}\label{eq:ba-betadensity}
 \limsup_{r\downarrow0}\frac{\beta(B_r(x))}{\mu(B_r(x))}<\infty.
\end{equation}
At such a point choose $r_\ell\downarrow0$ and define the rescaled
measures on Borel sets $E$ by
\[
 A^{(\ell)}(E)=r_\ell^{-s}A(x+r_\ell E),\quad
 \mu^{(\ell)}(E)=r_\ell^{-s}\mu(x+r_\ell E).
\]
The upper growth bound yields local weak measure compactness. Pass to
a limit $(\widehat A,\widehat\mu)$ and write
$\widehat A=\widehat P\widehat\mu$. At each continuity radius of
$\widehat\mu$, weak convergence and \eqref{eq:ba-densitylimit} imply
$\widehat\mu(B_R)=\Theta(x)R^s$. Squeezing between continuity radii
extends this identity to every $R>0$; in particular, every centered
sphere has zero $\widehat\mu$ mass.
Equality in the monotonicity formula implies
$\widehat P(y)y=0$ for $\widehat\mu$-almost every $y$. Testing stationarity
with $X(y)=\phi(y)y$, where $\phi\in C_c^\infty(\R^n)$, yields
$\int_{\R^n}(s\phi+y\cdot\nabla\phi)\dd\widehat\mu=0$.
Thus $\widehat\mu$ is homogeneous of degree $s$.

To iterate this construction, we choose a sequence realizing the tangent
pair and retaining the local Hessian bound. Relabel the radii so that
$B_{4\ell r_\ell}(x)\subset\subset G$. For each fixed $r_\ell$, choose $j(\ell)$
late enough and set
\[
 \widehat w_\ell(y)=w_{j(\ell)}(x+r_\ell y)
     -\fint_{B_1}w_{j(\ell)}(x+r_\ell z)\dd z,\quad
 \widehat q_\ell=r_\ell q_{j(\ell)}(x+r_\ell\cdot),\quad
 \widehat R_\ell=r_\ell^2R_{j(\ell)}(x+r_\ell\cdot).
\]
The $W^{1,1}$, source, and tensor errors can be made arbitrarily small
on an increasing finite list of balls, since the scale is fixed before
the index is chosen. Poincar\'e's inequality handles the subtracted mean.
Require covariance approximation on the first $\ell$ members of a
countable dense family of compactly supported test functions. Local bounds for the rescaled energy and Hessian are obtained by the
following cutoff conditions.
For each integer $1\leq m\leq\ell$, fix $0\leq\chi_m\leq1$, equal
to one on $B_m$ and supported in $B_{2m}$, and choose $j(\ell)$ so that
\[
\begin{aligned}
 \int\chi_m\abs{\widehat q_\ell}^2\dd y
 &\leq r_\ell^{-s}\int\chi_m\left(\frac{y-x}{r_\ell}\right)\dd\mu(y)
       +\ell^{-1},\\
 \int\chi_m\abs{D\widehat q_\ell}\dd y
 &\leq r_\ell^{-s}\int\chi_m\left(\frac{y-x}{r_\ell}\right)\dd\beta(y)
       +\ell^{-1}.
\end{aligned}
\]
These are finitely many valid requirements on the original weakly
convergent measures at each fixed scale. Equations
\eqref{eq:ba-densitylimit}--\eqref{eq:ba-betadensity} imply, for each fixed $m$,
\[
 \limsup_{\ell\to\infty}\int_{B_m}
    (\abs{\widehat q_\ell}^2+\abs{D\widehat q_\ell})\dd y
 \leq C_xm^s.
\]
These local bounds extend convergence from the countable test family
to all continuous compactly supported functions. Hence the sequence
realizes $(\widehat A,\widehat\mu)$ and has locally bounded Hessians.

The lower density bound also passes to every point in the tangent
support. Fix a compact neighborhood $K$ of $x$ contained in $G$.
If $y\in\supp\widehat\mu$, choose $y_\ell\in\supp\mu^{(\ell)}$
with $y_\ell\to y$. For each fixed $\rho>0$ and large $\ell$,
the corresponding original centers lie in $K$ and
$B_{\frac{\rho}{4}}(y_\ell)\subset\subset\overline B_{\frac{\rho}{2}}(y)$. Hence
\[
 \mu^{(\ell)}(\overline B_{\frac{\rho}{2}}(y))
 \geq a_K(\frac{\rho}{4})^s.
\]
The closed-set inequality for weak convergence yields
$\widehat\mu(B_\rho(y))\geq a_K(\frac{\rho}{4})^s$.
Thus the lower density condition is retained, with a fixed smaller constant.

We now repeat this procedure to gain constant zero directions. Suppose
a non-zero tangent pair $A=P\mu$ has $P(y)y=0$ and $P(y)v=0$ for $v$ in a
$k$-dimensional space $V$, where $k<s$. The upper $s$-growth bound
implies $\mu(V)=0$: cover a bounded part of $V$ by $O(r^{-k})$
balls and send $r\downarrow0$. We can therefore choose
$x\in\supp\mu\backslash V$ satisfying \eqref{eq:ba-betadensity}
for the current realizing sequence. Put
$a=\abs{\operatorname{proj}_{V^\perp}x}$ and
$e=\frac{\operatorname{proj}_{V^\perp}x}{a}$. Positivity and the kernel conditions yield
\[
 e^TP(y)e
 =a^{-2}(x-y)^TP(y)(x-y)
 \leq a^{-2}\abs{x-y}^2
 \quad\hbox{for }\mu\hbox{-almost every }y.
\]
On rescaling about $x$ this extra factor is $O(r_\ell^2)$ on each
fixed ball. Every new tangent therefore annihilates $e$ as well as
$V$, and again has a radial kernel by \eqref{eq:ba-densitylimit}.
All analytic bounds and the lower density condition are retained as
above. After $s$ repetitions we have a non-zero global pair with a
realizing sequence for which
\[
 \int_K\abs{\operatorname{proj}_Vq_j}^2\to0,
 \quad \dim V=s,\quad K\subset\subset\R^n.
\]

Rotate so $V=\R^s\times\{0\}$. Choose a compactly supported transverse
cutoff $\eta$ for which the projection of $\eta^2\mu$ is non-zero, and set
$F_j(z)=\int\eta^2\abs{q_j}^2\dd t$.
The computation in Lemma~\ref{lem:terminal} yields
\[
 \nabla_zF_j=\op{div}_zE_j+b_j,\quad
 E_j\to0,\quad b_j\to0\quad\hbox{locally in }L^1.
\]
Thus its distributional limit is a positive constant $\theta$ on
$\R^s$. Positivity follows by testing in a cylinder carrying positive
$\eta^2\mu$ mass. Lemma~\ref{lem:constancy} implies
$F_j\to\theta>0$ in measure on interior balls. After a fixed dilation
and localization, Lemma~\ref{lem:ba-slice} instead yields $F_j\to0$
in measure. This contradiction proves the lemma.
\end{proof}

\subsection{Small mass with bounded Dirichlet energy}

We next consider a sequence of stationary solutions whose masses
tend to zero and whose Dirichlet energies remain bounded.
The main point is to prove strong $H^1_{\loc}$ convergence
of the Green parts to zero. To apply Lemma~\ref{lem:ba-defect},
we use the small-energy regularity theorem to verify the required
lower density bound for any non-zero concentration measure.
Once the Green parts converge strongly, the same regularity
theorem implies that the solutions tend locally uniformly
to $-\infty$.

\begin{lem}\label{lem:ba-vanishing}
Let $u_j\in H^1(B_2)$ be stationary weak solutions with
$e^{u_j}\in L^1(B_2)$ and
\begin{equation}\label{eq:ba-vanishing-assumption}
 \int_{B_2}e^{u_j}\dd x\to0,\quad
 \sup_j\int_{B_2}\abs{\nabla u_j}^2\dd x\leq\Lambda<\infty.
\end{equation}
For the Green decompositions $u_j=h_j+v_j$ one has
$v_j\to0$ strongly in $H^1_{\loc}(B_2)$. Moreover,
\[
 \operatorname*{ess\,sup}_K u_j\to-\infty
 \quad\hbox{for every }K\subset\subset B_2.
\]
In particular $u_j$ is smooth near $K$ for all sufficiently large $j$.
\end{lem}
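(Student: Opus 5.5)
The plan is to apply Lemma~\ref{lem:ba-defect} to the Green parts, using Theorem~\ref{thm:weak-epsilon} to verify its lower density hypothesis \eqref{eq:ba-lowerdensity}, and then to use Theorem~\ref{thm:weak-epsilon} once more to push $u_j$ to $-\infty$.

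\textbf{Setup.} Put $f_j=e^{u_j}$, $M_j=\int_{B_2}f_j\to0$ and $w_j=v_j$. By Lemma~\ref{lem:ba-entropy} we have $\norm{v_j}_{W^{1,1}(B_2)}\leq C_nM_j\to0$, $-\Delta v_j=f_j\to0$ in $L^1$, and $\int_{B_2}\abs{\nabla v_j}^2\leq\Lambda$. The entropy bound \eqref{eq:ba-entropy} is bounded by $C_n(\Lambda+1)$, so the Hessian remark after Lemma~\ref{lem:ba-entropy} bounds $D^2v_j$ in $L^1$ on compact subsets. Interior estimates give $\abs{\nabla h_j}\leq C_{K}\sqrt\Lambda$ on $K\subset\subset B_2$. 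Expanding the stationarity identity as in Lemma~\ref{lem:harmapprox}, without normalization, gives $\op{div}(\cT(q_j)-R_j)=0$ with $q_j=\nabla v_j$ and
\[
 R_j=-q_j\otimes\nabla h_j-\nabla h_j\otimes q_j+(q_j\cdot\nabla h_j)I-f_jI .
\]
Since $q_j\to0$ in $L^1_{\rm loc}$ and $\nabla h_j$ is locally bounded, $R_j\to0$ in $L^1_{\rm loc}$. Pass to subsequences with $q_j\otimes q_j\dd x\rightharpoonup A$, $\abs{q_j}^2\dd x\rightharpoonup\mu$, $\abs{\nabla u_j}^2\dd x\rightharpoonup\nu$, and $h_j\to$ some limit. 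Strong $H^1_{\rm loc}$ convergence of $v_j$ is equivalent to $\mu=0$, because $v_j\to0$ in $L^1$.

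\textbf{Lower density, the main step.} I would show that if $x\in\supp\mu$ then $r^{-s}\nu(B_r(x))\geq\varepsilon$ for small $r$, with $\varepsilon$ depending only on $n$. Suppose instead that $r^{-s}\nu(B_{2r}(x))<\varepsilon_n/2$ for some small $r$. Since $M_j\to0$, Theorem~\ref{thm:weak-epsilon} rescaled applies to $u_j$ on $B_{2r}(x)$ for large $j$, giving smoothness and $\sup_{B_{r/4}(x)}e^{u_j}\leq C_nr^{-2}\cdot r^{2-n}\int_{B_{2r}(x)}e^{u_j}\to0$. The improved Morrey decay \eqref{eq:improved-morrey} then shows that the $v_j$-energy near $x$ vanishes. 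More directly, $e^{u_j}\to0$ uniformly on $B_{r/4}(x)$, and the Newtonian potential argument gives $v_j-(\text{harmonic})\to0$ in $C^1$ there, while the harmonic remainder of $v_j$ tends to zero in $L^1$ and hence in $C^1$. So $x\notin\supp\mu$.

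It remains to convert a lower bound on the density of $\nu$ into one for $\mu$. Here $\nu=\mu+\abs{\nabla h}^2\dd x$, since cross terms vanish in the limit. The density of $\abs{\nabla h}^2\dd x$ at scale $r$ is $O(r^2)$, being locally bounded, so $\mu(B_r(x))\geq(\varepsilon_n/4)r^s$ for $r<r_K$, where $r_K$ is uniform on compacts and depends on $\sup_K\abs{\nabla h}$. This gives \eqref{eq:ba-lowerdensity}, and Lemma~\ref{lem:ba-defect} yields $\mu=0$. I expect this step to be the main obstacle: one must check that the $\varepsilon$-regularity bound gives a local conclusion uniform in $j$, and that uniform smallness of $e^{u_j}$ kills $\abs{\nabla v_j}^2$ locally.

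\textbf{Supremum tends to $-\infty$.} With $\mu=0$ and $M_j\to0$, for every $x\in K$ the limit energy density satisfies $r^{-s}\nu(B_{2r}(x))\leq C\sup\abs{\nabla h}^2r^2$, which is small for a fixed small $r$ uniform on $K$. Since $\int_{B_{2r}(x)}\abs{\nabla u_j}^2\to\nu$-mass, it follows that $\mathcal E_{u_j}(x,2r)\leq\varepsilon_n$ for large $j$, uniformly in $x$ by compactness and a finite cover. Theorem~\ref{thm:weak-epsilon} then gives smoothness near $K$ and $\sup e^{u_j}\leq C r^{-n}\int_{B_2}e^{u_j}\to0$, that is, $\operatorname{ess\,sup}_Ku_j\to-\infty$. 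Since the limit is unique, the full sequence converges.
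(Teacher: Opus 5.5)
Your proposal is correct and follows essentially the same route as the paper. Both arguments use Lemma~\ref{lem:ba-entropy} for the $W^{1,1}$ and Hessian bounds, subtract the harmonic stress to get $R_j\to0$, verify \eqref{eq:ba-lowerdensity} via the rescaled Theorem~\ref{thm:weak-epsilon} together with interior estimates for $-\Delta v_j=e^{u_j}$, and then conclude with Lemma~\ref{lem:ba-defect} and a finite cover.

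Two small points need tidying. Weak convergence only bounds $\limsup_j\int_{B_\rho(x)}\abs{\nabla u_j}^2$ by $\nu(\overline B_\rho(x))$, so you should apply $\varepsilon$-regularity at a slightly smaller (or continuity) radius, as the paper does. Also, only $\nabla h_j$ has a limit; $h_j$ itself tends to $-\infty$.
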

\begin{proof}
By Lemma~\ref{lem:ba-entropy}, $v_j\to0$ in $W^{1,1}(B_2)$,
the gradients are bounded in $L^2$, and the Hessians are locally
bounded in $L^1$. Harmonic estimates yield locally uniform bounds for
every derivative of $\nabla h_j$. After a subsequence
$\nabla h_j\to b$ smoothly locally, where $b$ is a harmonic gradient.
Subtract the harmonic stress from stationarity and put $q_j=\nabla v_j$:
\[
 \op{div}(\mathcal T(q_j)-R_j)=0,\quad
 R_j=-q_j\otimes\nabla h_j-\nabla h_j\otimes q_j
       +(q_j\cdot\nabla h_j)I-e^{u_j}I.
\]
Here $R_j\to0$ locally in $L^1$.

Let $A$ be a covariance limit of $q_j$ and $\mu=\tr A$.
We verify \eqref{eq:ba-lowerdensity}. On a fixed interior compact set,
$b$ is bounded, so $r^{-s}\int_{B_r(x)}\abs{b}^2\leq C r^2$.
If $x\in\supp\mu$ and $\mu(B_r(x))<a_nr^s$ for a sufficiently
small fixed dimensional $a_n$ and a sufficiently small $r$, choose a
continuity radius $\rho\in(\frac{r}{2},r)$. On $B_\rho(x)$ the limiting
positive energy is
\[
 \lim_j\int_{B_\rho(x)}(\abs{\nabla u_j}^2+e^{u_j})
 =\int_{B_\rho(x)}\abs{b}^2+\mu(B_\rho(x)).
\]
The mixed terms vanish by local $L^1$ convergence of $q_j$.
Taking $a_n$ and then the uniform upper bound for $r$ small enough
makes $\mathcal E_{u_j}(x,\rho)<\varepsilon_n$ for large $j$.
The scaled form of Theorem~\ref{thm:weak-epsilon} yields
\[
 \sup_{B_{\frac{\rho}{8}}(x)}e^{u_j}
 \leq C_n\rho^{-n}\int_{B_2}e^{u_j}\to0.
\]
Interior estimates for $-\Delta v_j=e^{u_j}$, together with
$v_j\to0$ in $L^1$, imply $\nabla v_j\to0$ uniformly on a smaller
ball. This contradicts $x\in\supp\mu$. The required lower density
bound follows, and Lemma~\ref{lem:ba-defect} yields $\mu=0$.
Every subsequence has a further subsequence whose covariance limit
vanishes. Therefore $\nabla v_j\to0$ locally in $L^2$. Local Poincar\'e and
$L^1$ convergence yield the asserted $H^1$ convergence.

Finally cover a compact set by finitely many small fixed balls on which
the harmonic Dirichlet energies are below the small-energy threshold.
The Green energies and source masses tend to zero on these balls.
Theorem~\ref{thm:weak-epsilon} now bounds $\sup_K e^{u_j}$ by a fixed
multiple of $\int_{B_2}e^{u_j}$, which tends to zero.
\end{proof}

\begin{thm}[Small mass at bounded energy]\label{thm:ba-smallmass}
For every $n\geq3$ and $0\leq\Lambda<\infty$ there is
$\delta(n,\Lambda)>0$ such that every stationary weak solution
$u\in H^1(B_2)$ with $e^u\in L^1(B_2)$ satisfying
\begin{equation}\label{eq:ba-smallmass}
 \int_{B_2}\abs{\nabla u}^2\leq\Lambda,\quad
 \int_{B_2}e^u\leq\delta(n,\Lambda)
\end{equation}
is smooth on $B_1$ and satisfies
\begin{equation}\label{eq:ba-smallmass-bound}
 \sup_{B_1}e^u\leq C_n\int_{B_2}e^u.
\end{equation}

    The constant in \eqref{eq:ba-smallmass-bound} depends only on the
dimension; the admissible mass threshold $\delta(n,\Lambda)$ may depend on $\Lambda$.
Equivalently, on $B_{2r}(x)$ one may assume
$D_u(x,2r)\leq\Lambda$ and $M_u(x,2r)\leq\delta_*(n,\Lambda)$,
obtaining $\sup_{B_r(x)}e^u\leq C_nr^{-n}\int_{B_{2r}(x)}e^u$.
\end{thm}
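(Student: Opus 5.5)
We argue by contradiction and compactness, using Lemma~\ref{lem:ba-vanishing} for the qualitative part and Theorem~\ref{thm:weak-epsilon} for the quantitative bound. Suppose no $\delta(n,\Lambda)$ works. Then there are stationary weak solutions $u_j$ on $B_2$ with $\int_{B_2}\abs{\nabla u_j}^2\leq\Lambda$ and $\int_{B_2}e^{u_j}\to0$. For each $j$, either $u_j$ fails to be smooth on $B_1$, or $\sup_{B_1}e^{u_j}>C_n\int_{B_2}e^{u_j}$. Here $C_n$ is a dimensional constant to be fixed below, independent of $\Lambda$.

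First we deduce smoothness. Lemma~\ref{lem:ba-vanishing}, applied with $K=\overline B_1$, gives two things: $v_j\to0$ in $H^1_{\loc}(B_2)$, and $u_j$ is smooth near $\overline B_1$ for all large $j$. So the first alternative fails eventually. The threshold obtained this way depends on $\Lambda$, which is permitted.

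The main obstacle is making the constant in \eqref{eq:ba-smallmass-bound} depend only on $n$, not on $\Lambda$. The bound in Lemma~\ref{lem:ba-vanishing} comes from covering $K$ by balls that are small enough for the harmonic Dirichlet energy to fall below $\varepsilon_n$. The radius of these balls depends on $\Lambda$, and so would a naive constant. We remove this dependence by a direct argument that avoids the $\varepsilon$-regularity scale.

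\begin{itemize}
\item Put $h_j$ for the harmonic part. Since $h_j\leq u_j$ and $e^{h_j}$ is subharmonic, the mean value property on $B_1(x)\subset B_2$ gives, for $x\in B_1$,
\[
e^{h_j(x)}\leq\frac{1}{\abs{B_1}}\int_{B_2}e^{u_j}.
\]
This constant depends only on $n$.
\item It then suffices to show $\sup_{B_1}v_j\to0$. This yields $e^{u_j}=e^{h_j}e^{v_j}\leq 2\abs{B_1}^{-1}\int_{B_2}e^{u_j}$ on $B_1$ for large $j$.
\item To get $v_j\to0$ uniformly on $\overline B_1$, we use Lemma~\ref{lem:ba-vanishing} in full: $\operatorname*{ess\,sup}_K u_j\to-\infty$ on $K=\overline B_{3/2}$, so $e^{u_j}\to0$ uniformly there. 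Combining this with $\int_{B_2}e^{u_j}\to0$ in the Green representation of $v_j$ over the near part $B_{3/2}$, and using the smooth Green kernel $G(x,\cdot)$ on the far part $B_2\setminus B_{3/2}$ for $x\in B_1$, gives $\sup_{B_1}v_j\to0$.
\end{itemize}

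This contradicts the second alternative with $C_n=2\abs{B_1}^{-1}$. Hence some $\delta(n,\Lambda)$ exists for which both conclusions hold.

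The scaled version follows by applying the statement to $y\mapsto u(x+ry)+2\log r$ on $B_2$. This rescaling preserves the equation and stationarity, sends $\int_{B_2}\abs{\nabla\cdot}^2$ to $(2r)^{s}D_u(x,2r)$ up to a fixed factor, and sends the mass to the corresponding multiple of $M_u(x,2r)$. We then relabel the constants.
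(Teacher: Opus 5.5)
Your proof is correct and is essentially the paper's. Both arguments use Lemma~\ref{lem:ba-vanishing} by contradiction only qualitatively, to make $u$ uniformly small on $B_{3/2}$. Both then split $u$ into a harmonic part, bounded through subharmonicity of its exponential, and a potential part bounded by a dimensional constant, so the constant depends only on $n$. The paper needs only $u\leq0$ there: it solves $-\Delta V=e^u$ on $B_{3/2}$ with zero boundary values and compares with the torsion function. Your near/far splitting of the $B_2$ Green potential instead gives $v_j\to0$ uniformly on $B_1$ and the slightly sharper constant $2\abs{B_1}^{-1}$.

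One slip in the rescaling: $\int_{B_2}\abs{\nabla\widetilde u}^2=2^sD_u(x,2r)$ exactly. It is not $(2r)^sD_u(x,2r)$ up to a fixed factor, since that quantity is not scale invariant.
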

\begin{proof}
Lemma~\ref{lem:ba-vanishing}, by contradiction, yields a threshold
for which $u\leq0$ almost everywhere on $B_{\frac{3}{2}}$. Solve
$-\Delta V=e^u$ there with zero boundary values. Since $e^u\leq1$,
comparison with the torsion function yields $0\leq V\leq C_n$.
The function $H=u-V$ is harmonic and $e^H\leq e^u$. Hence
\[
 \sup_{B_1}e^u\leq e^{C_n}\sup_{B_1}e^H
 \leq C_n\int_{B_{\frac{3}{2}}}e^u.
\]
Elliptic regularity yields smoothness. 

For the last formulation put
$\widetilde u(y)=u(x+ry)+2\log r$. The exact scaling is
\[
 \int_{B_2}\abs{\nabla\widetilde u}^2=2^sD_u(x,2r),\quad
 \int_{B_2}e^{\widetilde u}=2^sM_u(x,2r).
\]
Thus one may take $\delta_*(n,\Lambda)=2^{-s}\delta(n,2^s\Lambda)$.
Rescaling \eqref{eq:ba-smallmass-bound} then yields the stated pointwise estimate.
\end{proof}

\subsection{The compactness alternative}

We now establish the compactness alternative for sequences with
locally bounded mass and Dirichlet energy. By Poincar\'e's
inequality, the first issue is whether the means remain bounded
or tend to $-\infty$. The $L\log L$ estimate gives strong local
$L^1$ convergence of the sources in either case.
We then use the small-mass regularity theorem to obtain locally
uniform convergence to $-\infty$ in the latter case, and smooth
convergence away from a concentration set in the former.

\begin{thm}[Compactness alternative]\label{thm:ba-dichotomy}
Let $\Omega\subset\subset\R^n$ be a connected open set and let $u_j$ be stationary
weak solutions satisfying
\begin{equation}\label{eq:ba-localbound}
 \sup_j\int_K(\abs{\nabla u_j}^2+e^{u_j})<\infty
 \quad\hbox{for every }K\subset\subset\Omega.
\end{equation}
After passage to a subsequence, exactly one of the following occurs.
\begin{enumerate}[label=$(\theenumi)$]
\item $u_j\to-\infty$ locally uniformly in the essential-supremum
sense. Fix a ball $B_*\subset\subset\Omega$ and let $c_j=\fint_{B_*}u_j$.
Then $c_j\to-\infty$ and $u_j-c_j\to h$ in
$C^\infty_{\loc}(\Omega)$ for a harmonic function $h$.
\item There are a weak solution $u\in H^1_{\loc}(\Omega)$ and a
relatively closed set $\Sigma$ with $\mathcal H^s(\Sigma)=0$ such that
\begin{align}
 u_j&\rightharpoonup u&&\text{in }H^1_{\loc}(\Omega),\label{eq:ba-weakconv}\\
 u_j&\to u&&\text{in }W^{1,p}_{\loc}(\Omega),\quad1\leq p<2,\label{eq:ba-subcriticalconv}\\
 e^{u_j}&\to e^u&&\text{in }L^1_{\loc}(\Omega),\label{eq:ba-sourceconv}\\
 u_j&\to u&&\text{in }C^\infty_{\loc}(\Omega\backslash\Sigma).\label{eq:ba-smoothconv}
\end{align}
Here $\Sigma$ can be taken to be the actual concentration set of the
chosen subsequence, namely the points $x$ such that every interior
ball $B_r(x)$ satisfies
$\limsup_j\operatorname*{ess\,sup}_{B_r(x)}u_j=+\infty$.
There is a positive semidefinite matrix measure $A$ supported on
$\Sigma$ with
\begin{equation}\label{eq:ba-covariance}
 \nabla u_j\otimes\nabla u_j\dd x
 \rightharpoonup\nabla u\otimes\nabla u\dd x+A,
\end{equation}
and the precise limiting stationarity identity is
\begin{equation}\label{eq:ba-defect-stress}
 \op{div}\left(\mathcal T(\nabla u)+e^uI
          +A-\frac{1}{2}(\tr A)I\right)=0.
\end{equation}
\end{enumerate}
In the first alternative the $C^\infty$ assertion concerns each compact
set for sufficiently large indices, when the solutions there are smooth.
In the second alternative no vanishing of $A$ is asserted without
an additional argument.
\end{thm}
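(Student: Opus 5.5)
The dichotomy will be decided by the behaviour of the means $c_j=\fint_{B_*}u_j$, after a subsequence in which $c_j$ converges in $[-\infty,+\infty)$. First, $c_j$ is bounded above: by Jensen's inequality $e^{c_j}\leq\fint_{B_*}e^{u_j}$, which is bounded by \eqref{eq:ba-localbound}. Fix a chain of balls covering any compact set, using connectedness of $\Omega$. Poincar\'e's inequality with the local Dirichlet bound shows that the means on overlapping balls differ by bounded amounts. Hence either all local means tend to $-\infty$ simultaneously or all stay bounded.

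\textbf{Case $c_j\to-\infty$.} Around any point, take a ball $B_{2r}(x)\subset\subset\Omega$. The scaled Dirichlet energy $D_{u_j}(x,2r)$ is bounded by some $\Lambda$ depending on the compact set and $r$. The mass $\int_{B_{2r}(x)}e^{u_j}$ will tend to zero, which I would get as follows:
\begin{itemize}
\item Lemma~\ref{lem:ba-entropy} on slightly larger balls gives uniform integrability of $e^{u_j}$.
\item Poincar\'e's inequality gives $u_j-c_j$ bounded in $L^2$, so $\{u_j>c_j+L\}$ has small measure uniformly in $j$.
\item On the complement, $e^{u_j}\leq e^{c_j+L}\to0$.
\end{itemize}
Theorem~\ref{thm:ba-smallmass} in its scaled form then applies for large $j$. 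It gives smoothness and $\sup_{B_r(x)}e^{u_j}\leq C r^{-n}\int e^{u_j}\to0$, which is the local uniform convergence to $-\infty$. For the harmonic-type limit, set $w_j=u_j-c_j$, so that $-\Delta w_j=e^{u_j}\to0$ uniformly on compacts and $w_j$ is bounded in $H^1_{\loc}$ with zero mean on $B_*$. Interior estimates, applied to the Newtonian potential of $e^{u_j}$ plus a harmonic remainder, give $C^\infty_{\loc}$ compactness of $w_j$. For the higher derivatives, bootstrap using $e^{u_j}=e^{c_j}e^{w_j}$ with $e^{c_j}\to0$ and $w_j$ locally bounded. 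The limit $h$ is harmonic.

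\textbf{Case $c_j$ bounded.} Poincar\'e's inequality gives $u_j$ bounded in $H^1_{\loc}$. I would extract a weak limit $u$ with $u_j\to u$ in $L^2_{\loc}$ and a.e. By Lemma~\ref{lem:ba-entropy} the family $e^{u_j}$ is uniformly integrable, so Vitali's theorem gives \eqref{eq:ba-sourceconv}, and passing to the limit in the weak equation shows $u$ is a weak solution. For \eqref{eq:ba-subcriticalconv}, decompose $u_j=h_j+v_j$ on balls. The $h_j$ converge smoothly in the interior. For the $v_j$, the Hessians are bounded in $L^1$ by Lemma~\ref{lem:ba-entropy}, so $\nabla v_j$ is precompact in $L^1_{\loc}$. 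Interpolating with the $L^2$ bound gives convergence in $L^p$ for all $p<2$.

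Define $\Sigma$ as in the statement; it is relatively closed by definition. For $x\notin\Sigma$, $u_j$ is bounded above on some ball $B_r(x)$, so $e^{u_j}$ is bounded there. Elliptic bootstrap, starting from $\Delta u_j\in L^\infty$ with $u_j$ bounded in $L^2$, gives \eqref{eq:ba-smoothconv}. Smooth convergence off $\Sigma$ shows that the defect measure
\[
A=\lim_j \nabla u_j\otimes\nabla u_j\dd x-\nabla u\otimes\nabla u\dd x
\]
is positive semidefinite and supported on $\Sigma$. Its existence and the formula \eqref{eq:ba-covariance} follow from weak convergence of the quadratic terms, together with strong $L^1$ convergence of the gradients to eliminate the mixed terms. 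Stationarity then passes to the limit as \eqref{eq:ba-defect-stress}, using $e^{u_j}\to e^u$ in $L^1$.

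\textbf{The main obstacle is $\mathcal H^s(\Sigma)=0$.} I would first show that $\Sigma$ is contained in the set $Z$ of points where
\[
\limsup_{r\downarrow0}r^{-s}\bigl(\mu(B_r(x))+\nu(B_r(x))\bigr)>0,
\]
where $\mu$ is the weak limit of $|\nabla u_j|^2\dd x$ and $\nu$ is the limit of $e^{u_j}$. Indeed, if the density vanishes at $x$, then at a small continuity radius the scaled energies $\mathcal E_{u_j}(x,\rho)$ are eventually below $\varepsilon_n$. Theorem~\ref{thm:weak-epsilon} then bounds $u_j$ above near $x$, so $x\notin\Sigma$. Since $\nu=e^u\dd x$ is absolutely continuous, its density vanishes off an $\mathcal H^s$-null set, so positive density comes from $\mu$.

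Here $\mu=|\nabla u|^2\dd x+\tr A$, and the first part also has $\mathcal H^s$-null positive-density set, by the argument in the proof of Corollary~\ref{coro:Hausdorff dim of singular set}. So everything reduces to the singular measure $\tr A$. The standard covering bound $\mathcal H^s(\{\Theta^{*s}(\tr A)>\epsilon\})\leq C\epsilon^{-1}(\tr A)(\cdot)$ gives $\mathcal H^s(\Sigma\cap K)\leq C(\tr A)(\Sigma\cap K)/\epsilon$, which is only finite, not zero. To get zero, I would show that $\tr A$ is absolutely continuous with respect to $\mathcal H^s\llcorner\Sigma$ with density bounded below. Then I would either prove that $A$ vanishes, in the spirit of Lemma~\ref{lem:ba-defect} via the lower density bound from $\varepsilon$-regularity, or observe that $\mathcal H^s(\Sigma)=0$ iff $\tr A=0$. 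This is the step I expect to be delicate; the remark after the statement, that vanishing of $A$ is not asserted, suggests the authors obtain $\mathcal H^s(\Sigma)=0$ differently. My first attempt would be to show that $\Sigma$ consists of points where the energy density is at least $\varepsilon_n$. Then I would use that $u_j$ has bounded energy while $\mathcal H^s$-positive sets would force $e^{u_j}$-mass concentration, contradicting the uniform integrability given by the $L\log L$ bound of Lemma~\ref{lem:ba-entropy}.
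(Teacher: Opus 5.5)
\textbf{Verdict: there is a genuine gap, at $\mathcal H^s(\Sigma)=0$.} Most of your outline is sound and follows the paper's lines. This covers the Jensen/Poincar\'e dichotomy on $c_j$, the case $c_j\to-\infty$ via Theorem~\ref{thm:ba-smallmass}, Vitali for \eqref{eq:ba-sourceconv}, \eqref{eq:ba-smoothconv}, and the construction of $A$ with \eqref{eq:ba-covariance} and \eqref{eq:ba-defect-stress}. Your derivation of \eqref{eq:ba-subcriticalconv} from the $L^1$ Hessian bounds of Lemma~\ref{lem:ba-entropy} is a legitimate alternative. The paper instead uses a.e.\ convergence of the gradients off the null set $\Sigma$.

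The step $\mathcal H^s(\Sigma)=0$ is the real content of the second alternative, and you leave it open. Because you test regularity with Theorem~\ref{thm:weak-epsilon}, you need the Dirichlet density to be small as well. So you can only place $\Sigma$, up to an $\mathcal H^s$-null set, where the upper $s$-density of $\tr A$ is at least $c\varepsilon_n$. That gives local finiteness and nothing more. None of your proposed repairs closes this:
\begin{itemize}
\item Proving $A=0$ is not available from the tools at hand. Lemma~\ref{lem:ba-defect} needs $\op{div}(\mathcal T(q_j)-R_j)=0$ with $R_j\to0$ for $q_j=\nabla(u_j-u)$. But $u$ need not be stationary: by \eqref{eq:ba-defect-stress} the divergence of its stress is exactly $-\op{div}(A-\frac12(\tr A)I)$.
\item The lower density bound used in Lemma~\ref{lem:ba-vanishing} came from the mass tending to zero, which fails in this alternative. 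This is why the statement asserts nothing about $A$, and why Theorem~\ref{thm:ba-morrey} needs the Morrey bound.
\item Your claim ``$\mathcal H^s(\Sigma)=0$ iff $\tr A=0$'' holds in one direction only. A priori $\tr A$ could be carried by an $\mathcal H^s$-null set of infinite $s$-density.
\end{itemize}

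\textbf{The missing ingredient is Theorem~\ref{thm:ba-smallmass}, which you already used in the first alternative.} Regularity requires only the mass density to be small; the Dirichlet density need merely be bounded. Let $\mu_2$ be the weak limit of $\abs{\nabla u_j}^2\dd x$ and set
\[
 Z_1=\Bigl\{x:\limsup_{r\downarrow0}r^{-s}\int_{B_r(x)}e^u>0\Bigr\},\quad
 Z_2=\Bigl\{x:\liminf_{r\downarrow0}r^{-s}\mu_2(B_r(x))=\infty\Bigr\}.
\]
Both sets are $\mathcal H^s$-null:
\begin{itemize}
\item $Z_1$ because $e^u\in L^1_{\loc}$, by the bounded-part plus $L^1$-tail argument.
\item $Z_2$ because, for every $N$, the points where $\mu_2(B_r(x))\geq Nr^s$ for all small $r$ have $\mathcal H^s$ measure at most $C\mu_2(K')/N$, with $K'$ a compact neighbourhood.
\end{itemize}

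Now take $x\notin Z_1\cup Z_2$. Choose continuity radii $r_i\downarrow0$ with $r_i^{-s}\mu_2(B_{r_i}(x))\leq\Lambda$ and $r_i^{-s}\int_{B_{r_i}(x)}e^u\to0$. The choices must be made in this order:
\begin{enumerate}
\item Fix $\Lambda$.
\item Fix the threshold $\delta_*(n,\Lambda+1)$.
\item Fix $i$ so that the limiting mass density is below half this threshold.
\item Use weak convergence at the continuity radius and $e^{u_j}\to e^u$ in $L^1_{\loc}$ to get $D_{u_j}(x,r_i)\leq\Lambda+1$ and $M_{u_j}(x,r_i)\leq\delta_*(n,\Lambda+1)$ for all large $j$.
\end{enumerate}
The scaled form of Theorem~\ref{thm:ba-smallmass} then bounds $u_j$ above on $B_{r_i/2}(x)$ uniformly in $j$, so $x\notin\Sigma$. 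Hence $\Sigma\subset Z_1\cup Z_2$, and no information on $A$ is needed.

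Your closing heuristic is the right intuition: positive $\mathcal H^s$ measure should force $e^{u_j}$-mass concentration, which uniform integrability forbids. It becomes a proof only once this bounded-energy form of $\varepsilon$-regularity replaces Theorem~\ref{thm:weak-epsilon}.
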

\begin{proof}
By Jensen inequality, $c_j$ is bounded above. Poincar\'e's inequality on connected
relatively compact Lipschitz subdomains containing $B_*$ shows that
$u_j-c_j$ is bounded in local $H^1$. Such subdomains can cover every
prescribed compact set, since $\Omega$ is connected. Rellich compactness
and a diagonal subsequence yield local $L^2$ and almost-everywhere
convergence of $u_j-c_j$. Passing to a further subsequence, either
$c_j\to-\infty$ or $c_j$ has a finite limit.
Lemma~\ref{lem:ba-entropy} yields local uniform integrability of
$e^{u_j}$, independent of the negative means.

If $c_j\to-\infty$, $e^{u_j}\to0$ in local $L^1$ by Vitali's theorem.
Lemma~\ref{lem:ba-vanishing} implies $u_j\to-\infty$ locally uniformly
in the essential-supremum sense.
The normalized functions $u_j-c_j$ have bounded local $L^2$ norm, and $\Delta(u_j-c_j)$ converges  uniformly
to $0$. Interior Poisson estimates first yield local
$C^{1,\alpha}$ compactness, for $0<\alpha<1$. Then
$e^{u_j}=e^{c_j}e^{u_j-c_j}$ and elliptic bootstrap yield convergence
in every local $C^m$ norm to a harmonic function.

If $c_j$ has a finite limit, $u_j\rightharpoonup u$ locally in $H^1$ and
$u_j\to u$ in local $L^2$ and almost everywhere. Vitali convergence theorem yields
\eqref{eq:ba-sourceconv}, so the weak equation passes to the limit.
Extract $\abs{\nabla u_j}^2\dd x\rightharpoonup\mu_2$ and define
\[
 Z_1=\left\{x:\limsup_{r\downarrow0}r^{-s}\int_{B_r(x)}e^u>0\right\},
 \quad
 Z_2=\left\{x:\liminf_{r\downarrow0}r^{-s}\mu_2(B_r(x))=\infty\right\}.
\]
Both sets have zero $\mathcal H^s$ measure. For $Z_1$ use the bounded
part and $L^1$ tail argument at the end of
Section~\ref{sec:liouville}. For $Z_2$, a Vitali covering of each
compact portion where $\mu_2(B_r(x))\geq Nr^s$ at all sufficiently
small radii yields a bound $\frac{C\mu_2(K')}{N}$; let $N\to\infty$.

If $x\notin Z_1\cup Z_2$, there are continuity radii $r_i\downarrow0$ with
\[
 \sup_i r_i^{-s}\mu_2(B_{r_i}(x))<\infty,\quad
 r_i^{-s}\int_{B_{r_i}(x)}e^u\to0.
\]
To obtain continuity radii,
shrink preliminary radii by a factor between $\frac{1}{2}$ and $1$; the
fixed loss in the bounds is harmless. For each fixed $i$, the corresponding
quantities for $u_j$ converge as $j\to\infty$. Rescale to $B_2$,
first fix $i$ large enough for Theorem~\ref{thm:ba-smallmass}, and
then take $j$ large. This yields a uniform upper bound near $x$.
Consequently $\Sigma\subset Z_1\cup Z_2$ and
$\mathcal H^s(\Sigma)=0$. The concentration set $\Sigma$ is relatively closed because its
complement is open by definition.

On that open complement, the local upper bounds and the $L^2$ bounds
for $u_j$ imply \eqref{eq:ba-smoothconv} by elliptic estimates.
The gradients therefore converge almost everywhere, and their local
$L^2$ bounds yield strong $L^p$ convergence for all $p<2$.
Weak lower semicontinuity of squared scalar projections defines a
positive semidefinite defect $A$ in \eqref{eq:ba-covariance}.
Smooth convergence off $\Sigma$ locates its support there. Passing to
the stationary identity, using the strong convergence of the sources,
proves \eqref{eq:ba-defect-stress}.
\end{proof}

\subsection{Strong compactness under a Morrey bound}

In the finite-limit alternative, it remains to determine whether
the gradients converge strongly in $L^2_{\loc}$.
The possible defect is supported on the concentration set
$\Sigma$, which has zero $\mathcal H^{n-2}$ measure.
A uniform local Morrey bound on the Dirichlet energy implies
that the limiting energy measure gives no mass to $\Sigma$.
This proves strong convergence and allows us to pass
the stationary condition to the limit.

\begin{thm}[Strong compactness under a Morrey bound]\label{thm:ba-morrey}
In the finite-limit alternative of Theorem~\ref{thm:ba-dichotomy}, suppose that
for every open set $D\subset\subset\Omega$ there is $\Lambda_D<\infty$ such that
\begin{equation}\label{eq:ba-morrey}
 r^{2-n}\int_{B_r(x)}\abs{\nabla u_j}^2\dd y\leq\Lambda_D
 \quad(B_r(x)\subset\subset D,\ j\geq1).
\end{equation}
Then $u_j\to u$ strongly in $H^1_{\rm loc}(\Omega)$, the limit $u$ is
stationary, and the concentration set of the selected sequence equals
$\Sing(u)$.  The convergence is $C^m_{\rm loc}(\Omega\backslash\Sing(u))$ for
every $m\geq0$.
\end{thm}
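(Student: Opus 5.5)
Under the Morrey bound, the plan is to show that the defect measure $A$ from Theorem~\ref{thm:ba-dichotomy} vanishes. Strong convergence and stationarity of $u$ then follow at once. Set $\mu=\tr A$. By \eqref{eq:ba-covariance}, the energy measures converge: $\abs{\nabla u_j}^2\dd x\rightharpoonup\abs{\nabla u}^2\dd x+\mu$. Weak convergence together with \eqref{eq:ba-morrey} gives, for balls $B_r(x)\subset\subset D$,
\[
 \mu(B_r(x))\leq\liminf_j\int_{B_r(x)}\abs{\nabla u_j}^2\dd y\leq C\Lambda_D r^{s}.
\]
So $\mu$ has locally bounded upper $s$-density. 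Since $\mathcal H^s(\Sigma)=0$, a covering argument gives $\mu(\Sigma)=0$: cover $\Sigma\cap K$ by balls $B_{r_i}(x_i)$ with $\sum r_i^s<\epsilon$, and obtain $\mu(\Sigma\cap K)\leq C\Lambda_D\epsilon$. Because $\supp A\subset\Sigma$, we get $A=0$. Hence $\int_K\abs{\nabla u_j}^2\to\int_K\abs{\nabla u}^2$ on compact sets, and combined with the weak convergence \eqref{eq:ba-weakconv} this yields strong $H^1_{\rm loc}$ convergence. With $A=0$, identity \eqref{eq:ba-defect-stress} is exactly the stationarity of $u$.

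Next I would identify $\Sigma$ with $\Sing(u)$. The inclusion $\Sing(u)\subset\Sigma$ is immediate: off $\Sigma$, the convergence \eqref{eq:ba-smoothconv} is $C^\infty_{\rm loc}$, so $u$ is smooth there. For the converse, take $x\notin\Sing(u)$. Then $u$ is smooth near $x$, so $e^u$ and $\abs{\nabla u}^2$ are bounded near $x$, and $\mathcal E_u(x,r)\leq Cr^2$ is below $\varepsilon_n/2$ for small $r$. Choose $r$ so that $\partial B_r(x)$ is a continuity sphere. By strong $H^1$ convergence and \eqref{eq:ba-sourceconv}, $\mathcal E_{u_j}(x,r)\to\mathcal E_u(x,r)<\varepsilon_n$. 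The scaled Theorem~\ref{thm:weak-epsilon} then gives
\[
 \sup_{B_{r/4}(x)}e^{u_j}\leq C_nr^{-n}\int_{B_r(x)}e^{u_j},
\]
which is uniformly bounded in $j$. So $\operatorname*{ess\,sup}u_j$ stays bounded near $x$ and $x\notin\Sigma$. This gives $\Sigma=\Sing(u)$. The $C^m_{\rm loc}(\Omega\backslash\Sing(u))$ convergence is then \eqref{eq:ba-smoothconv}.

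The argument reduces to the density bound on $\mu$, which the Morrey hypothesis \eqref{eq:ba-morrey} provides directly, and the one technical point is the covering estimate. The covering must use balls contained in a fixed $D\subset\subset\Omega$ with small radii, so that the Morrey bound applies to them. The limit inequality must be taken on slightly larger open balls, for example $\mu(\overline B_r)\leq\mu(B_{2r})$, to get $\mu$ of closed balls bounded by $C\Lambda_D(2r)^s$. With these two adjustments no other obstacle appears, and Lemma~\ref{lem:ba-defect} is not needed here.
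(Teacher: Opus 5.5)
Your proposal is correct and follows essentially the paper's own argument. You pass the Morrey bound to the limit measure, conclude it charges no $\mathcal H^{s}$-null set, and deduce that the defect supported on $\Sigma$ vanishes; you then obtain $\Sigma=\Sing(u)$ from strong convergence and Theorem~\ref{thm:weak-epsilon}. The only differences are immaterial: you bound $\tr A$ where the paper bounds the full energy limit $\mu_2$, and the rescaled $\varepsilon$-regularity bound holds on $B_{r/8}(x)$ rather than $B_{r/4}(x)$.
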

\begin{proof}
Write $s=n-2$, and let $\mu_2$ be the weak limit of
$\abs{\nabla u_j}^2\dd x$.  On balls compactly contained in $D$,
\eqref{eq:ba-morrey} passes to the limit and yields
$\mu_2(B_r(x))\leq\Lambda_Dr^s$.  Therefore $\mu_2$ charges no set of zero
$\mathcal H^s$ measure: cover such a set by balls and sum this bound.
Theorem~\ref{thm:ba-dichotomy} yields $\mathcal H^s(\Sigma)=0$ and smooth
convergence off $\Sigma$.  The positive gradient defect is supported on
$\Sigma$, hence vanishes. For each non-negative $\chi\in C_c^\infty(\Omega)$,
weak convergence of the energy measures and of the gradients yields
\[
 \int\chi\abs{\nabla u_j-\nabla u}^2\dd x
 =\int\chi\abs{\nabla u_j}^2\dd x
  -2\int\chi\nabla u_j\cdot\nabla u\dd x
  +\int\chi\abs{\nabla u}^2\dd x\to0.
\]
Taking $\chi=1$ near a prescribed compact set, and using the local
$L^2$ convergence of the functions, proves strong $H^1_{\rm loc}$ convergence.
The stresses now converge in $L^1_{\rm loc}$, so their divergence equation
passes to $u$.

Smooth convergence outside $\Sigma$ implies $\Sing(u)\subset\Sigma$.  If $u$ is smooth near $x$, choose $r>0$
so small that $\mathcal E_u(x,r)<\frac{\varepsilon_n}{2}$.  Strong gradient
convergence and strong $L^1$ convergence of the exponential imply
$\mathcal E_{u_j}(x,r)<\varepsilon_n$ for large $j$.  Theorem~\ref{thm:weak-epsilon}
makes $x$ a regular point of the sequence, proving the reverse inclusion.
Covering each compact subset of $\Omega\backslash\Sing(u)$ by finitely
many such balls and applying interior elliptic estimates proves the
asserted smooth convergence.
\end{proof}

\subsection{Monotonicity and tangent solutions}
We next study blow-up limits at singular points under the local
Morrey bound assumption. The strong compactness result above gives
convergence of the rescaled solutions. To identify the
homogeneity of their limits, we use the following monotonicity
formula, which follows from the equation and the stationary
condition. Its dissipation term measures the failure of
logarithmic homogeneity.

\begin{prop}[Monotonicity]\label{prop:ba-weiss}
For a stationary weak solution, define at almost every admissible radius
\begin{equation}\label{eq:ba-weiss}
 W_u(x,r)=r^{2-n}\int_{B_r(x)}
 \left(\frac{1}{2}\abs{\nabla u}^2-e^u\right)\dd y
 +2r^{1-n}\int_{\partial B_r(x)}(u+2\log r)\dd\sigma.
\end{equation}
It has a locally absolutely continuous representative on $r>0$, and
\begin{equation}\label{eq:ba-weiss-dissipation}
 W_u(x,R)-W_u(x,r)
 =\int_r^R t^{2-n}\int_{\partial B_t(x)}
 \left(\partial_\nu u+\frac{2}{t}\right)^2\dd\sigma\dd t\geq0.
\end{equation}
This monotonicity concerns $W_u$, not the positive energy $\mathcal E_u$.
\end{prop}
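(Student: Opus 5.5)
The plan is to differentiate $W_u(x,r)$ in $r$, using two integral identities at almost every radius. The stationarity \eqref{eq:stationarity-definition}, tested against a radial vector field, gives a Pohozaev identity. The weak equation \eqref{equationLiouville}, tested against a radial cutoff, gives the flux identity. Combining them should make the derivative a perfect square. After translation take $x=0$ and write $S=\cT(\nabla u)+e^uI$, so $\op{div}S=0$ in $\mathcal D'$.

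\textbf{Step 1 (Pohozaev identity).} Test $\op{div}S=0$ with $X(y)=\psi_\varepsilon(\abs{y})\,y$, where $\psi_\varepsilon$ is a Lipschitz cutoff equal to $1$ on $[0,r]$, vanishing on $[r+\varepsilon,\infty)$ and linear in between; this is legitimate by density since $S\in L^1_{\rm loc}$. We have $S:DX=\psi_\varepsilon\tr S+\psi_\varepsilon'(\abs y)\abs y\,\frac{y^T}{\abs y}S\frac{y}{\abs y}$ and $\tr S=(1-\frac n2)\abs{\nabla u}^2+ne^u$. Letting $\varepsilon\downarrow0$ at Lebesgue points of the radial functions $t\mapsto\int_{\partial B_t}\abs{S}\dd\sigma$ (and of the other boundary integrands below) gives, for a.e.\ $r$,
\[
 \Bigl(1-\frac n2\Bigr)\int_{B_r}\abs{\nabla u}^2+n\int_{B_r}e^u
 =r\int_{\partial B_r}\Bigl(u_\nu^2-\frac12\abs{\nabla u}^2+e^u\Bigr)\dd\sigma .
\]

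\textbf{Step 2 (flux identity).} Test the weak equation with $\psi_\varepsilon(\abs y)$, which is allowed since $u\in H^1$ and $e^u\in L^1$. Letting $\varepsilon\downarrow0$ gives, for a.e.\ $r$,
\[
 \int_{B_r}e^u\dd y=-\int_{\partial B_r}u_\nu\dd\sigma .
\]

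\textbf{Step 3 (differentiation).} The function $r\mapsto r^{2-n}\int_{B_r}(\frac12\abs{\nabla u}^2-e^u)$ is locally absolutely continuous, because its integrand is $L^1$ (coarea). Its derivative is $(2-n)r^{1-n}\int_{B_r}(\cdot)+r^{2-n}\int_{\partial B_r}(\frac12\abs{\nabla u}^2-e^u)$. Substituting the boundary integral from Step 1, the $\abs{\nabla u}^2$ bulk terms cancel, and the derivative becomes
\[
 r^{2-n}\int_{\partial B_r}u_\nu^2\dd\sigma-2r^{1-n}\int_{B_r}e^u\dd y
 =r^{2-n}\int_{\partial B_r}u_\nu^2\dd\sigma+2r^{1-n}\int_{\partial B_r}u_\nu\dd\sigma,
\]
where the equality uses Step 2.

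For the boundary term, $r^{1-n}\int_{\partial B_r}u\dd\sigma=\abs{\mathbb S^{n-1}}\fint_{\partial B_r}u$. For $H^1$ functions the spherical mean is locally absolutely continuous with derivative $r^{1-n}\int_{\partial B_r}u_\nu$. This follows from writing it as $\int_{\mathbb S^{n-1}}u(r\omega)\dd\omega$, which has a $W^{1,1}_{\rm loc}$ dependence on $r$ by Fubini in polar coordinates. Also $2r^{1-n}\int_{\partial B_r}2\log r\dd\sigma=4\abs{\mathbb S^{n-1}}\log r$. Hence the second term of $W_u$ contributes the derivative
\[
 2r^{1-n}\int_{\partial B_r}u_\nu\dd\sigma+\frac{4\abs{\mathbb S^{n-1}}}{r}.
\]
Adding the two contributions and using $\frac{4\abs{\mathbb S^{n-1}}}{r}=r^{2-n}\int_{\partial B_r}\frac{4}{r^2}\dd\sigma$, the total derivative is
\[
 r^{2-n}\int_{\partial B_r}\Bigl(u_\nu+\frac2r\Bigr)^2\dd\sigma\geq0 .
\]
Integrating from $r$ to $R$ then yields \eqref{eq:ba-weiss-dissipation}.

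\textbf{Main obstacle.} The delicate point is justifying Steps 1--3 for merely stationary $H^1$ solutions with $e^u\in L^1$. One needs the passage $\varepsilon\downarrow0$ to hold simultaneously at a common full-measure set of radii. One also needs the absolutely continuous representative of $W_u$, so that its a.e.\ derivative integrates back to the difference $W_u(x,R)-W_u(x,r)$ with no singular part. I would handle this by integrating the identities against $\psi_\varepsilon'$ in $r$ first, which yields integrated versions valid for all $r<R$. The a.e.\ statements and the absolute continuity then follow from the Lebesgue differentiation theorem applied to the $L^1_{\rm loc}(0,\infty)$ radial functions obtained by the coarea formula.
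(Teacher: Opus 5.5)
Your proposal is correct and follows essentially the same route as the paper: the Pohozaev identity from radial vector fields in the stationarity, the flux identity $\int_{B_r}e^u\,dy=-\int_{\partial B_r}\partial_\nu u\,d\sigma$ from the weak equation tested with a radial function constant near the origin, and completing the square in the a.e.\ derivative of $W_u$. The only difference is in how the flux identity is derived. The paper tests with general radial $\eta$ and identifies $I=-F$ as distributions on $(0,R_0)$, whereas you use the piecewise-linear cutoff $\psi_\varepsilon$ and Lebesgue points; this amounts to the same argument.
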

\begin{proof}
Translate $x$ to zero and put
$D(r)=\int_{B_r}\abs{\nabla u}^2$ and $F(r)=\int_{B_r}e^u$.
Radial vector fields in the stationary identity yield, for almost every $r$,
\[
 -\frac{n-2}{2}D(r)+nF(r)
 =r\int_{\partial B_r}
 \left(\abs{\partial_\nu u}^2-\frac{1}{2}\abs{\nabla u}^2+e^u\right)\dd\sigma.
\]
Consequently
\[
 \frac{\dd}{\dd r}\left[r^{2-n}\left(\frac{1}{2}D(r)-F(r)\right)\right]
 =r^{2-n}\int_{\partial B_r}\abs{\partial_\nu u}^2\dd\sigma
 -2r^{1-n}F(r).
\]
To check the flux at the center, put
$I(r)=\int_{\partial B_r}\partial_\nu u\dd\sigma$ for almost every $r$.
Fix an admissible $R_0$ and take a smooth radial test function $\eta(r)$
supported in $[0,R_0)$ and constant near zero. The weak equation and
one-dimensional integration by parts yield
\[
 \int_0^{R_0}\eta'(r)I(r)\dd r
 =\int_0^{R_0}\eta(r)F'(r)\dd r
 =-\int_0^{R_0}\eta'(r)F(r)\dd r.
\]
Here $F(0)=0$. Every smooth function compactly supported in $(0,R_0)$ is the
derivative of such a radial test function, whose value at zero is unrestricted.
Thus $I(r)=-F(r)$ almost everywhere, with no additional flux constant.
Differentiating the second term of \eqref{eq:ba-weiss} yields
$-2r^{1-n}F(r)+\frac{4\abs{\mathbb S^{n-1}}}{r}$.
Adding the two identities completes the square.  All terms are integrable
on annuli bounded away from zero; radial Sobolev slicing justifies the
differentiations and integration proves \eqref{eq:ba-weiss-dissipation}.
\end{proof}

\begin{thm}[Tangent solutions and singular mass density]\label{thm:ba-tangents}
Let $u$ be a stationary weak solution on an open set $\Omega\subset\subset\R^n$.
Suppose that for each open $D\subset\subset\Omega$ there is $\Lambda_D<\infty$
such that $D_u(z,\rho)\leq\Lambda_D$ whenever $B_\rho(z)\subset\subset D$.
If $x\in\Sing(u)$ and $r_j\downarrow0$, a subsequence of
\[
 u_{x,r_j}(y)=u(x+r_jy)+2\log r_j
\]
converges strongly in $H^1_{\rm loc}(\R^n)$ to a stationary weak solution $U$,
and $e^{u_{x,r_j}}\to e^U$ strongly in $L^1_{\rm loc}(\R^n)$.
The origin is singular for $U$, and
\begin{equation}\label{eq:ba-homogeneity}
 U(\lambda y)=U(y)-2\log\lambda
 \quad(\lambda>0)
\end{equation}
as an identity of locally integrable functions.  Equivalently,
\[
 U(y)=-2\log\abs{y}+\psi\left(\frac{y}{\abs{y}}\right),\quad
 \psi\in H^1(\mathbb S^{n-1}),\quad e^\psi\in L^1(\mathbb S^{n-1}),
\]
where, in the weak sense on the sphere,
\begin{equation}\label{eq:ba-angular}
 -\Delta_{\mathbb S^{n-1}}\psi+2(n-2)=e^\psi.
\end{equation}
In particular, without requiring uniqueness of $U$,
\begin{equation}\label{eq:ba-fixed-mass}
 \lim_{r\downarrow0}r^{2-n}\int_{B_r(x)}e^u\dd y
 =2\abs{\mathbb S^{n-1}}.
\end{equation}
\end{thm}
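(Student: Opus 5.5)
We rescale, use Theorem~\ref{thm:ba-morrey} to obtain strong convergence to a stationary blow-up, and then use the Weiss monotonicity (Proposition~\ref{prop:ba-weiss}) to force homogeneity.

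\textbf{Step 1: uniform bounds and strong convergence.} Fix $D\subset\subset\Omega$ containing $x$. The rescalings $u_{x,r_j}$ are stationary weak solutions. By scale invariance, $D_{u_{x,r_j}}(y,\rho)=D_u(x+r_jy,r_j\rho)\leq\Lambda_D$, so the Morrey hypothesis \eqref{eq:ba-morrey} holds uniformly on every fixed ball $B_R$.

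For the mass, we first need a bound on $M_u(x,r)$ as $r\downarrow0$. We obtain it from Proposition~\ref{prop:ba-weiss}. The stationarity identity for radial fields gives
\[
 nF(r)-\frac{n-2}{2}D(r)=r\int_{\partial B_r}\left(\abs{\partial_\nu u}^2-\tfrac12\abs{\nabla u}^2+e^u\right)\dd\sigma .
\]
Integrating this in $r$ over $[\rho,2\rho]$ controls $\rho^{2-n}F(\rho)$ by $C\Lambda_D$ plus the boundary mass term, and that term is dominated by the $D$-terms after averaging in $r$. Hence $M_u(x,\rho)\leq C\Lambda_D$ for small $\rho$.

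Next we handle the means. We test the weak equation against radial cutoffs and use $I(r)=-F(r)$ from the proof of Proposition~\ref{prop:ba-weiss}. This shows that spherical averages of $u_{x,r_j}$ on $\partial B_1$ are bounded above. We then apply Theorem~\ref{thm:ba-dichotomy} on each $B_R$ with a diagonal subsequence.

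The first alternative is excluded as follows. If $u_{x,r_j}\to-\infty$ locally uniformly, then $u$ is bounded above near $x$ in the rescaled sense. By Theorem~\ref{thm:ba-smallmass} at a fixed scale, with small mass and Dirichlet energy bounded by $\Lambda_D$, the point $x$ would then be regular, a contradiction. So the finite-limit alternative holds. Theorem~\ref{thm:ba-morrey} then gives strong $H^1_{\loc}$ convergence, strong $L^1_{\loc}$ convergence of $e^{u_{x,r_j}}$, and stationarity of the limit $U$. It also gives $\Sing$ equal to the concentration set. Since $x$ is singular, each $u_{x,r_j}$ is unbounded above near $0$, so $0\in\Sigma=\Sing(U)$.

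\textbf{Step 2: homogeneity via Weiss.} I expect this to be the main step. We have the scaling identity $W_{u_{x,r_j}}(0,\rho)=W_u(x,r_j\rho)$. By \eqref{eq:ba-weiss-dissipation}, $W_u(x,\cdot)$ is monotone, and it is bounded below near $0$: the bulk term is bounded by $C(\Lambda_D+M)$, and the spherical-mean term is bounded below using a Jensen bound from the mass control. Hence the limit $W_u(x,0^+)$ exists. Strong $H^1$ and $L^1$ convergence, together with trace continuity at almost every radius, pass $W$ to the limit, so $W_U(0,\rho)\equiv W_u(x,0^+)$ for almost every $\rho$. The dissipation \eqref{eq:ba-weiss-dissipation} for $U$ then vanishes, giving $\partial_\nu U=-2/\abs{y}$ almost everywhere. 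Integrating along rays in $H^1_{\loc}$ yields \eqref{eq:ba-homogeneity}.

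The main technical obstacle is the lower bound on the spherical-mean term in $W_u$ and its convergence, since $u$ may tend to $-\infty$ on spheres. I would first try Jensen ($\fint_{\partial B_r}u\leq\log\fint e^u$) for an upper bound, and for the lower bound use the relation between the radial derivative of the spherical mean and $-F(r)$.

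\textbf{Step 3: angular equation and mass density.} Writing $U=-2\log\abs{y}+\psi$, the homogeneity of $U\in H^1_{\loc}$ gives $\psi\in H^1(\mathbb S^{n-1})$. Since $e^{U}=\abs{y}^{-2}e^\psi$ is locally integrable, $e^\psi\in L^1(\mathbb S^{n-1})$. The weak equation in polar coordinates, tested with products of a radial and an angular function, gives \eqref{eq:ba-angular}, using $\Delta(-2\log\abs{y})=-2(n-2)\abs{y}^{-2}$.

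Integrating \eqref{eq:ba-angular} over the sphere gives $\int e^\psi=2(n-2)\abs{\mathbb S^{n-1}}$, so
\[
 \int_{B_1}e^U=\frac{1}{n-2}\int_{\mathbb S^{n-1}}e^\psi=2\abs{\mathbb S^{n-1}} .
\]
Since $\int_{B_1}e^{u_{x,r_j}}=r_j^{2-n}\int_{B_{r_j}(x)}e^u$ converges to this value for every sequence $r_j$, with the limit independent of the subsequence, we obtain \eqref{eq:ba-fixed-mass}.
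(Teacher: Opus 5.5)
Your outline follows the paper's proof. You rescale, use $x\in\Sing(u)$ to rule out escape of the additive constants, and get strong convergence and stationarity of $U$ from Theorems~\ref{thm:ba-dichotomy} and~\ref{thm:ba-morrey}. You then force homogeneity with Proposition~\ref{prop:ba-weiss} and read off the angular equation and the mass $2\abs{\mathbb S^{n-1}}$. Applying the Weiss identity to $U$ itself, and getting $0\in\Sing(U)$ from $\Sigma=\Sing(U)$, are harmless variants. Two supporting steps, however, are justified incorrectly.

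The first is the mass bound. In the radial identity the boundary term $r\int_{\partial B_r}e^u=rF'(r)$ is not dominated by the $D$-terms. Integrating over $[\rho,2\rho]$ gives only $(n+2)F(\rho)\leq 2F(2\rho)+C\Lambda_D\rho^{n-2}$, i.e. $M_u(x,\rho)\leq\frac{2^{n-1}}{n+2}M_u(x,2\rho)+C\Lambda_D$, which iterates only for $n=3$. The paper's route takes one line. Test the weak equation with a cutoff equal to one on $B_r(z)$ and supported in $B_{2r}(z)$; Cauchy--Schwarz then gives $\int_{B_r(z)}e^u\leq C_n\sqrt{\Lambda_D}\,r^{n-2}$. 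Integrating $\frac{d}{dr}(r^{-n}F)$ from the radial identity would also work.

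The second is the lower bound for $W_u(x,r)$ as $r\downarrow0$. Jensen bounds the spherical mean of $u_{x,r}$ from above, not from below. The relation $I(r)=-F(r)$ only gives $\frac{d}{dr}\fint_{\partial B_1}u_{x,r}=r^{-1}\bigl(2-M_u(x,r)/\abs{\mathbb S^{n-1}}\bigr)$, which yields no uniform bound in either direction. So it does not give the upper bound you claim in Step~1 either, though that one is unnecessary because Theorem~\ref{thm:ba-dichotomy} gets it from Jensen on balls.

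In fact no argument using only the mass and Dirichlet bounds can work. At a regular point, $W_u(x,r)=2\abs{\mathbb S^{n-1}}(u(x)+2\log r)+o(1)\to-\infty$. The singularity of $x$ must enter, and it already does in your Step~1. There are two ways to use it.
\begin{enumerate}
\item Argue as the paper does. Your exclusion of the first alternative applies along every sequence $r\to0$, so $\fint_{B_1}u_{x,r}$ is bounded below for all small $r$. Hence the $u_{x,r}$ are bounded in $H^1(B_2)$, and the trace inequality on an annulus bounds $\int_{\partial B_1}u_{x,r}$.
\item Observe that the lower bound is superfluous. Monotonicity gives $\lim_{r\downarrow0}W_u(x,r)\in[-\infty,\infty)$. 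The strong convergence of Step~1 gives $W_u(x,r_j\rho)=W_{u_{x,r_j}}(0,\rho)\to W_U(0,\rho)\in\R$ for every $\rho>0$. So the limit is finite and equals $W_U(0,\rho)$ for all $\rho$.
\end{enumerate}
With either repair, and the corrected mass bound, the rest of your argument goes through.
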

\begin{proof}
Fix a relatively compact neighborhood of $x$ and a Morrey constant $\Lambda$
there.  For $B_{2r}(z)$ in this neighborhood, testing the equation against a
cutoff equal to one on $B_r(z)$ yields
\begin{equation}\label{eq:ba-mass-morrey}
 \int_{B_r(z)}e^u\dd y
 \leq C_nr^{-1}\abs{B_{2r}}^{\frac{1}{2}}
       \left(\int_{B_{2r}(z)}\abs{\nabla u}^2\dd y\right)^{\frac{1}{2}}
 \leq C_n\sqrt\Lambda\,r^{n-2}.
\end{equation}
Thus the rescalings have bounded positive energy on every fixed ball.

We next check that their additive constants cannot escape.  Jensen's
inequality bounds $\fint_{B_1}u_{x,r}$ from above.  If along a sequence
these averages tended to $-\infty$, Poincar\'e's inequality and the uniform
gradient bound would imply $u_{x,r}\to-\infty$ in measure on $B_1$.
Lemma~\ref{lem:ba-entropy} yields
uniform integrability of $e^{u_{x,r}}$ on $B_1$ (apply it on $B_2$).
Hence $\int_{B_1}e^{u_{x,r}}\to0$.
Theorem~\ref{thm:ba-smallmass}, after a fixed rescaling, would make
$u_{x,r}$ smooth near zero, contradicting $x\in\Sing(u)$.
The averages on $B_1$ are therefore bounded below as well.  Poincar\'e's
inequality on larger concentric balls then bounds the $H^1$ norms on
every fixed ball.  Theorems~\ref{thm:ba-dichotomy} and~\ref{thm:ba-morrey},
with a diagonal subsequence, yield the stated strong convergence and
stationarity.  If $U$ were smooth near zero, its positive energy would be
small on a sufficiently small ball; strong convergence and
Theorem~\ref{thm:weak-epsilon} would yield the same contradiction.

The preceding bounds hold for all sufficiently small $r$, rather than
only for the selected sequence. The interior trace estimate on the annulus
$B_{\frac{3}{2}}\backslash\overline B_{\frac{1}{2}}$ yields
\[
 \abs{\int_{\partial B_1}u_{x,r}\dd\sigma}
 \leq C_n\norm{u_{x,r}}_{H^1(B_2)}\leq C.
\]
This uses only the $L^2$ bound for the $H^{\frac{1}{2}}$ trace. Together with
the energy bounds, it bounds $W_u(x,r)$ at almost every small $r$.
Proposition~\ref{prop:ba-weiss} implies the existence of a finite limit
$W_u(x,0)$.  For $0<a<b<\infty$, scaling its dissipation identity yields
\[
 \int_a^b t^{2-n}\int_{\partial B_t}
 \left(\partial_\nu u_{x,r_j}+\frac{2}{t}\right)^2\dd\sigma\dd t
 =W_u(x,r_jb)-W_u(x,r_ja)\to0.
\]
Strong $H^1$ convergence on annuli implies
$y\cdot\nabla U=-2$ almost everywhere. Integration on rays proves
\eqref{eq:ba-homogeneity}. The stronger angular conclusion
$\psi\in H^1(\mathbb S^{n-1})$ follows from annular slicing, not from
the trace estimate: choose a radius $t\in(\frac{1}{2},\frac{3}{2})$ at which the
angular slice of $U$ belongs to $H^1$ and its exponential is integrable,
and use $\psi=U(t\,\cdot)+2\log t$. Fubini theorem and the annular energy
bounds provide such radii. Substitution in the weak equation with
separated radial and angular test functions proves \eqref{eq:ba-angular}.
Testing that equation with one yields
\[
 \int_{\mathbb S^{n-1}}e^\psi\dd\sigma
 =2(n-2)\abs{\mathbb S^{n-1}},
 \quad \int_{B_1}e^U\dd y=2\abs{\mathbb S^{n-1}}.
\]
Strong $L^1(B_1)$ convergence proves this value for the mass along the
chosen subsequence.  Every sequence of radii admits such a subsequence
and all its tangent limits have the same mass, proving
\eqref{eq:ba-fixed-mass} for the full limit.
\end{proof}

\subsection{Dimension reduction}

We now apply the Federer dimension reduction argument to estimate
the Hausdorff dimension of the singular set. The two main
ingredients are the persistence of singular points under strong
convergence and the logarithmic homogeneity of tangent solutions.
Taking a further tangent at a non-zero singular point gives
an additional direction of translation invariance.
If the singular set had dimension greater than $n-3$, repeating
this procedure would produce a two-dimensional logarithmically
homogeneous solution with locally finite Dirichlet energy,
which is impossible.

\begin{thm}[Dimension reduction]\label{thm:ba-dimension}
Under the hypotheses of Theorem~\ref{thm:ba-tangents},
\[
 \dim_{\mathcal H}\Sing(u)\leq n-3.
\]
This bound is sharp, as shown by the codimension-three cylinders in
Section~\ref{sec:calibrations}.  If $n=3$, the singular set is locally
finite.  In this dimension, for each singular point $x$,
\[
 u(y)=-2\log\abs{y-x}+O(1)\quad\hbox{as }y\to x,
\]
and the mass density in \eqref{eq:ba-fixed-mass} equals $8\pi$.
\end{thm}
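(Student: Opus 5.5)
We plan to run Federer's dimension reduction with the three ingredients already available: the strong compactness of Theorem~\ref{thm:ba-morrey}, which gives upper semicontinuity of singular sets under the Morrey bound; the logarithmic homogeneity of tangents from Theorem~\ref{thm:ba-tangents}; and a direct exclusion of the terminal low-dimensional case. The Morrey hypothesis is scale invariant, since $D_{u_{x,r}}(z,\rho)=D_u(x+rz,r\rho)$, so every tangent solution $U$ satisfies a global Morrey bound $D_U(z,\rho)\leq\Lambda$ on $\R^n$. Consequently all the results of this section apply again to $U$ and to its own tangents.

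\emph{Step 1: upper semicontinuity.} If stationary solutions $u_j\to U$ strongly in $H^1_{\rm loc}$ under a uniform Morrey bound, and $y_j\in\Sing(u_j)$ with $y_j\to y$, then $y\in\Sing(U)$. Indeed, otherwise $\mathcal E_U(y,r)<\varepsilon_n/2$ for some small $r$. Strong convergence, which gives $L^1_{\rm loc}$ convergence of $e^{u_j}$ by Theorem~\ref{thm:ba-dichotomy}, then yields $\mathcal E_{u_j}(y_j,r/2)<\varepsilon_n$ for large $j$. Theorem~\ref{thm:weak-epsilon} would then make $y_j$ regular, which is a contradiction. This step also identifies $\Sing(U)$ with the concentration set.

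\emph{Step 2: splitting.} Let $U$ be logarithmically homogeneous about $0$, as in \eqref{eq:ba-homogeneity}, and let $y_0\neq0$ be a point of $\Sing(U)$. Take a tangent $\widetilde U$ of $U$ at $y_0$. By Theorem~\ref{thm:ba-tangents}, $\widetilde U$ is homogeneous about $0$. The homogeneity of $U$ about $0$ passes to the blow-up at $y_0$ as invariance along $y_0$: the rescalings $U(y_0+r\cdot)+2\log r$ shifted along $\R y_0$ differ only by an $o(1)$ dilation. Concretely, $\widetilde U(y+ty_0)=\widetilde U(y)$, which follows from $y_0\cdot\nabla U=-2$ after rescaling, since the factor $r$ kills this derivative. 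Iterating, we obtain tangents that are homogeneous and translation invariant along a $k$-dimensional subspace. The standard Federer argument, using Step~1 and the fact that $\mathcal H^{d}_\infty$-density is positive at $\mathcal H^{d}$-a.e.\ point of a set with $\mathcal H^d>0$, then shows the following. If $\mathcal H^{d}(\Sing(u))>0$ for some $d>n-3$, one reaches a tangent $U_*$ that is invariant along an $(n-2)$-dimensional subspace, homogeneous, and singular at $0$. Such a $U_*$ is $-2\log\abs{t}+\psi(\theta)$ on $\R^2_t$, possibly reducing to $k$ invariant directions with $\mathcal H^{d-k}$ still positive at the last stage.

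\emph{Step 3: terminal contradiction.} This is the step we expect to be the main obstacle. We have $U_*(z,t)=-2\log\abs{t}+\psi(\theta)$ with $\psi\in H^1(\mathbb S^1)$. We will rule it out in either of two ways. The first is by energy: $\abs{\nabla U_*}^2\geq4\abs{t}^{-2}$, which is not locally integrable in $\R^2$, contradicting $U_*\in H^1_{\rm loc}$. The second is by the angular equation \eqref{eq:ba-angular} in dimension $2$, which reads $-\psi''=e^\psi$ on the circle. Integrating this over the circle gives $\int e^\psi=0$, which is impossible. Either argument excludes $n-2$ invariant directions, so $\dim_{\mathcal H}\Sing(u)\leq n-3$.

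For $n=3$, the bound gives dimension $0$. We will prove local finiteness by contradiction. If singular points $x_k\to x$ accumulate, rescale at $r_k=\abs{x_k-x}$. By Step~1 the tangent $U$ has a singular point on the unit sphere. Step~2 then yields a tangent with one invariant direction, which is excluded by Step~3. Hence each tangent at $x$ is singular only at $0$ and smooth on $\mathbb S^2$. There $\psi$ solves $-\Delta\psi+2=e^\psi$, and the Morrey and homogeneity bounds give compactness of the family of tangents. The density $2\abs{\mathbb S^2}=8\pi$ is then \eqref{eq:ba-fixed-mass}. The expansion $u=-2\log\abs{y-x}+O(1)$ will come from uniform bounds on $\psi$ over all tangents: the tangent set is compact in $C^0(\mathbb S^2)$ by elliptic estimates on the sphere away from singularities. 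By Step~1 these tangents converge smoothly on the unit annulus, which yields $\abs{u(y)+2\log\abs{y-x}}\leq C$ on dyadic annuli near $x$.
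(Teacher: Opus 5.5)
Your proposal is correct and follows the paper's route. It runs Federer dimension reduction on two ingredients: the persistence of singular points under strong convergence, which is your Step~1 and exactly what the paper verifies, and the logarithmic homogeneity of tangents. It ends with the impossibility of a two-dimensional logarithmically homogeneous $H^1_{\rm loc}$ profile, which your bound $\abs{\nabla U_*}\geq 2/\abs{t}$ makes explicit. You also spell out the splitting step and the $n=3$ consequences (isolated singular points, the $O(1)$ expansion via smooth convergence of rescalings on the unit sphere, and the density $2\abs{\mathbb S^2}=8\pi$), which the paper leaves to the standard argument.
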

\begin{proof}
With the monotonicity formula (Proposition \ref{prop:ba-weiss}) in hand, the proof is a standard application of the Federer dimension reductuon principle. We need only to verify that singular points persist under the strong convergence
of Theorem~\ref{thm:ba-morrey}. Indeed, if $v_j\to v$ strongly in
$H^1_{\loc}$ and $e^{v_j}\to e^v$ strongly in $L^1_{\loc}$,
$x_j\in\Sing(v_j)$, and $x_j\to x$, then $x\in\Sing(v)$.
Indeed, regularity of $v$ at $x$, strong energy convergence on a small
ball, and Theorem~\ref{thm:weak-epsilon} contradict the presence of $x_j$
in its smaller concentric ball.  Consequently, for each compact $K$ and
open neighborhood $G$ of $\Sing(v)\cap K$,
$\Sing(v_j)\cap K\subset\subset G$ for all sufficiently large $j$.
\end{proof}

\begin{rem}
For $n>3$ the angular function in \eqref{eq:ba-angular} need not be smooth:
for $y\in\R^3$, $z\in\R^{n-3}$, the stationary Morrey-bounded solution
$U(y,z)=\log2-2\log\abs{y}$ has the singular plane $\{y=0\}$ and the angular
profile $\psi(\theta)=\log2-2\log\abs{\theta_y}$, which is singular where
$\theta_y=0$.  Thus even the Morrey assumption permits non-discrete
singular sets in higher dimensions. As in many other elliptic PDE problems, we do not claim the
uniqueness of tangent solutions.
\end{rem}

\subsection{The role of the Dirichlet energy bound}

Theorem~\ref{thm:ba-smallmass} shows that small mass implies
regularity when the Dirichlet energy has an a priori bound.
The required smallness of the mass, however, depends on this
bound. It is natural to ask whether the Dirichlet energy
assumption can be removed altogether. More precisely,
we have the following question.

\begin{q}\label{q:ba-puremass}
For every fixed $n\geq3$, do there exist $\delta_n>0$ and $C_n<\infty$
such that every stationary weak solution $u\in H^1(B_2)$ with
$e^u\in L^1(B_2)$ satisfying
\[
 \int_{B_2}e^u\dd x\leq\delta_n
\]
is smooth on $B_1$ and satisfies
$\sup_{B_1}e^u\leq C_n\int_{B_2}e^u$, without an a priori bound
for $\int_{B_2}\abs{\nabla u}^2$?
\end{q}

We believe the answer is negative, although at present we still cannot find a counterexample. The examples below
only show that small mass does not control the full Dirichlet energy; they
do not contradict either regularity assertion.

\begin{example}
\label{ex:ba-cosh}
For $a>0$ and $b>2$, the entire smooth function
\[
 u_{a,b}(x)=\log(2a^2)-2\log\cosh(a(x_1-b))
\]
is stationary and satisfies
\[
 -\Delta u_{a,b}=e^{u_{a,b}}
 =2a^2\operatorname{sech}^2(a(x_1-b)),\quad
 \nabla u_{a,b}=-2a\tanh(a(x_1-b))e_1.
\]
On $B_2$,
\[
 \int_{B_2}e^{u_{a,b}}
 \leq8a^2\abs{B_2}e^{-2a(b-2)}.
\]
With $a$ fixed and $b\to\infty$, the mass tends to zero, whereas
$\nabla u_{a,b}\to2ae_1$ uniformly on $B_2$. Thus even at bounded
Dirichlet energy there is no estimate
\[
 \int_{B_1}\abs{\nabla u}^2\leq C(n,\Lambda)\int_{B_2}e^u,
\]
nor an implication that the inner Dirichlet energy tends to zero with
the mass. Taking $a=j$ and $b=3$ instead makes the mass tend to zero
and the Dirichlet energy tend to infinity. All these solutions are
smooth and tend locally uniformly to $-\infty$ on $B_2$ in the indicated limits;
they do not answer Question~\ref{q:ba-puremass} negatively.
\end{example}

The dependence on the Dirichlet bound enters through the decomposition
$u=h+v$ and the identity
\[
 \int_{B_2}e^uv=\int_{B_2}\abs{\nabla v}^2
 \leq\int_{B_2}\abs{\nabla u}^2.
\]
Together with the $L\log L$ estimate, this controls the Hessians and the
tangent sequences in Lemma~\ref{lem:ba-defect}. The preceding estimates
do not yield these bounds from small mass alone.

\begin{example}
\label{ex:ba-bubbles}
Let $x=(y,z)\in\R^2\times\R^{n-2}$ and
\[
 U_\lambda(y,z)=\log\frac{8\lambda^2}{(1+\lambda^2\abs{y}^2)^2},
 \quad\lambda\to\infty.
\]
These are smooth stationary solutions. In the sense of locally finite
measures on $\R^n$,
\[
 e^{U_\lambda}\dd x\rightharpoonup
 8\pi\mathcal H^{n-2}\llcorner\{y=0\},
\]
while $U_\lambda\to-\infty$ uniformly on compact sets disjoint from
the plane and $U_\lambda(0,z)\to+\infty$.
Indeed, the two-dimensional source is an approximate identity of total
mass $8\pi$, and Fubini theorem  yields the displayed limit. Also, writing
$\omega_s=\abs{B_1^s}$,
\[
 \int_{B_1^n}\abs{\nabla U_\lambda}^2
 =32\pi\omega_s\log\lambda+O(1).
\]
This follows by integrating
$\frac{16\lambda^4\abs{y}^2}{(1+\lambda^2\abs{y}^2)^2}$ against the transverse
cross-section $\omega_s(1-\abs{y}^2)^{\frac{s}{2}}$ and splitting at
$\abs{y}=\lambda^{-1}$.
Hence, the gradient bound in Theorem~\ref{thm:ba-dichotomy} cannot
simply be deleted. This example generalizes the three-dimensional
one in \cite[Section~1]{DaLioHyder}. Its mass on a fixed ball centered
on the plane tends to a positive constant, so it too is not a
counterexample to Question \ref{q:ba-puremass} with an arbitrarily small invariant mass.
\end{example}

\section*{Acknowledgments}
Kelei Wang was supported by the National Key R\&D Program of China (No. 2022YFA1005602) and the National Natural Science Foundation of China (No. 12425108 and No. 12221001). Ke Wu was supported by the National Natural Science Foundation of China (No. 12401264) and Yunnan Revitalization Talent Support Program.  

The authors acknowledge the use of AI tools. All mathematical arguments and
proofs in the final manuscript were checked and written by the authors.

\bibliographystyle{plain}

\begin{thebibliography}{10}

\bibitem{Allard86}
W.~K. Allard.
\newblock An integrality theorem and a regularity theorem for surfaces whose
  first variation with respect to a parametric elliptic integrand is
  controlled.
\newblock Geometric measure theory and the calculus of variations, {Proc}.
  {Summer} {Inst}., {Arcata}/{Calif}. 1984, {Proc}. {Symp}. {Pure} {Math}. 44,
  1-28 (1986)., 1986.

\bibitem{BM91}
H.~Br{\'e}zis and F.~Merle.
\newblock Uniform estimates and blow-up behavior for solutions of {{\(-\Delta{}
  u =V(x) e^ u\)}} in two dimensions.
\newblock {\em Commun. Partial Differ. Equations}, 16(8-9):1223--1253, 1991.

\bibitem{ChaeWolf}
D.~Chae and J.~Wolf.
\newblock On the {Liouville} theorem for weak {Beltrami} flows.
\newblock {\em Nonlinearity}, 29(11):3417--3425, 2016.

\bibitem{DaLio}
F.~Da~Lio.
\newblock Partial regularity for stationary solutions to {Liouville}-type
  equation in dimension 3.
\newblock {\em Commun. Partial Differ. Equations}, 33(10):1890--1910, 2008.

\bibitem{DaLioHyder}
F.~Da~Lio and A.~Hyder.
\newblock Blow-up analysis of stationary solutions to a {Liouville}-type
  equation in {3D}.
\newblock {\em Commun. Contemp. Math.}, 27(2):30, 2025.
\newblock Id/No 2450020.

\bibitem{GrafakosKinnunen}
L.~Grafakos and J.~Kinnunen.
\newblock Sharp inequalities for maximal functions associated with general
  measures.
\newblock {\em Proc. R. Soc. Edinb., Sect. A, Math.}, 128(4):717--723, 1998.

\bibitem{Ledoux}
M.~Ledoux.
\newblock On improved {Sobolev} embedding theorems.
\newblock {\em Math. Res. Lett.}, 10(5-6):659--669, 2003.

\bibitem{Lin}
F.~Lin.
\newblock Gradient estimates and blow-up analysis for stationary harmonic maps.
\newblock {\em Ann. Math. (2)}, 149(3):785--829, 1999.

\bibitem{Lin01}
Fang-Hua Lin and Tristan Rivi\'{e}re.
\newblock A quantization property for static ginzburg-landau vortices.
\newblock {\em Comm. Pure Appl. Math.}, 54(2):206--228, 2001.

\bibitem{Mattila}
P.~Mattila.
\newblock {\em Geometry of sets and measures in {Euclidean} spaces. {Fractals}
  and rectifiability}, volume~44 of {\em Camb. Stud. Adv. Math.}
\newblock Cambridge: Univ. Press, 1995.

\bibitem{Moser}
R.~Moser.
\newblock Stationary measures and rectifiability.
\newblock {\em Calc. Var. Partial Differ. Equ.}, 17(4):357--368, 2003.

\bibitem{Stein}
Elias~M. Stein.
\newblock {\em Singular integrals and differentiability properties of
  functions}, volume~30 of {\em Princeton Math. Ser.}
\newblock Princeton University Press, Princeton, NJ, 1970.

\bibitem{TaoNotes}
Terence Tao.
\newblock Lecture notes 4 for 247a.
\newblock University of California, Los Angeles, 2006.

\bibitem{Wang12}
K.~Wang.
\newblock Partial regularity of stable solutions to the {Emden} equation.
\newblock {\em Calc. Var. Partial Differ. Equ.}, 44(3-4):601--610, 2012.

\end{thebibliography}

\end{document}